\documentclass[10pt]{article}
\usepackage[nohead,left=1in,right=1in,top=0.8in,bottom=0.9in]{geometry}
\usepackage{lmodern}
\usepackage[dvipsnames]{xcolor}
\usepackage{amsmath,amssymb,amsthm,mathtools}
\usepackage{graphicx}
\usepackage{subcaption}
\usepackage{enumitem}
\usepackage[backend=biber,
  style=numeric,
  sorting=none,          
  maxnames=3,
  giveninits=true,       
  citestyle=numeric-comp
  ]{biblatex}
\usepackage{nicefrac}
\usepackage{hyperref}
\usepackage{pgfplots}
\pgfplotsset{compat=1.17}

\newtheorem{theorem}{Theorem}
 
\newtheorem{proposition}[theorem]{Proposition}
\newtheorem{corollary}[theorem]{Corollary}

\theoremstyle{definition}

\theoremstyle{remark}  
\newtheorem{remark}[theorem]{Remark}

\theoremstyle{definition}
\newtheorem{assumption}{Assumption}

\newcommand{\R}{\mathbb{R}}
\newcommand{\N}{\mathbb{N}}
\newcommand{\E}{\mathbb{E}}

\DeclareMathOperator*{\argmin}{arg\,min}

\begin{document}

\title{A remark on an error analysis for classical and learned Tikhonov regularization schemes}

\author{
Arne Behrens\thanks{\parbox[t]{\linewidth}{Center for Industrial Mathematics, University of Bremen, Bremen, Germany,\\
Emails: \texttt{\{abehrens,iskem,pmaass\}@uni-bremen.de}}}
\and
Meira Iske\footnotemark[1]
\and
Ming Jiang\thanks{\parbox[t]{\linewidth}{LMAM, School of Mathematical Sciences, Peking University, Beijing, China,\\
Email: \texttt{ming-jiang@pku.edu.cn}}}
\and
Peter Maass\footnotemark[1]
\and
Sebastian Neumayer\thanks{\parbox[t]{\linewidth}{Faculty of Mathematics, Chemnitz University of Technology, Chemnitz, Germany,\\
Email: \texttt{sebastian.neumayer@mathematik.tu-chemnitz.de}}}
}

\date{}

\maketitle

\begin{abstract}
This paper presents an error analysis of classical and learned Tikhonov regularization schemes for inverse problems.
We first demonstrate, both theoretically and numerically, that using a fixed regularization parameter across varying noise levels—which is a common miss-specification in practice—has only a mild impact on the reconstruction error.
As a special case, we then investigate scenarios where the true data resides in an unknown finite-dimensional subspace.
Here, our results lead to an empirically supported strategy for estimating the unknown dimension based on numerical experiments.
Finally, we examine the approach that motivated this study: a method where a sparsity-promoting term is learned from denoising tasks and subsequently applied to general inverse problems via a simple heuristic parameter selection.
The corresponding error analysis is initially developed using classical concepts and subsequently refined through a more detailed investigation of the discretized setting.
\end{abstract}


\section{Motivation}\label{sec:motivation}

This paper primarily addresses regularization methods for linear inverse problems in their functional analytical setting. The motivation, however, stems from data-driven concepts.
To be more precise, this work emerged from a discussion at ICSI 2024.
For many years, Alfred Louis served as a regular visitor and co-organizer of this conference series, which he initiated alongside Ming Jiang and Nathan Ida.
Besides the scientific merits, this gave him the opportunity to meet his Chinese colleagues and friends and to explore the country from Beijing to Inner Mongolia.

Our interest was sparked by a presentation on numerical and analytic results for neural networks applied for inverse problems~\cite{Goujon2022}.
The talk steered discussions on the parallels between machine learning and variational methods—specifically regarding training at a fixed noise level. Once trained, the network was employed as a regularization term within a Tikhonov functional for various operators and noise levels.
Interestingly, despite the somehow restricted training setting, applying a simple heuristic regularization parameter adjustment strategy yielded good results.

This paper analyzes variants of this setting with the aim to determine bounds for the resulting worst-case errors in the framework of optimal regularization schemes as outlined in \cite{Louis1989}.
First of all, we consider the case of a Tikhonov scheme $T_\alpha$, where the regularization parameter $\alpha=\alpha(\bar \delta)$ is chosen optimally for a fixed noise level $\bar \delta$. We then apply this scheme to data with a different noise level $\delta$.
At first glance one would expect that for $\delta \gg \bar \delta$ the ill-posedness of the problem kicks in and leads to poor reconstructions.
To this end, we analytically compare the worst-case error of this setting with the worst-case error for an optimally chosen regularization parameter $\alpha = \alpha(\delta)$.
Interestingly, the miss-specification has only a mild impact, which is also confirmed in our numerical simulations.
Closer to data-driven concepts, we also analyze a setting, where the data is restricted to an unknown but finite-dimensional linear subspace.
This is the setting where data-driven approaches usually excel.
As a side result of our error analysis, we provide a procedure that allows to determine this intrinsic subspace dimension.
Finally, we analyze the worst-case error of the learned scheme of~\cite{Goujon2022}, extending the stability analysis established in \cite{NeuAlt2024} for a simplified setting.
In particular, we embed this scheme into the classical worst-case error analysis as outlined above.

\section{Mathematical background on Tikhonov regularization}
We use the standard setting for linear inverse problems, i.e., we consider a linear bounded operator $A\colon X \rightarrow Y$ between Hilbert spaces $X$, $Y$.
Furthermore, we assume that $A$ is compact with singular value decomposition (SVD) $(v_j, u_j, \sigma_j)_{j \in \N}$.
The adjoint operator is denoted by $A^\ast \colon Y \rightarrow  X$. 
The corresponding inverse problem consists of recovering an approximation of the true solution $ x^\dagger \in X$ from noisy data $y^\delta \in Y$, where
\begin{equation}
    y^\dagger = A x^\dagger \quad \text{and} \quad \| y^\delta - y^\dagger \| \leq \delta.
\end{equation}
An approximate solution is typically  computed by a regularization scheme $T_\alpha \colon Y \rightarrow X$, and we frequently use the notation $x_\alpha^\delta \coloneqq T_\alpha y^\delta$.
Classical choices for $T_\alpha$ are the Landweber iteration, truncated SVD, conjugate gradient iterations or Tikhonov regularization.
We focus on the most classical Tikhonov regularization, which is defined by
\begin{equation}\label{eq:TikReg}
    T_\alpha y^\delta \coloneqq \argmin_x  \  \frac{1}{2}\| Ax - y^\delta \|^2 +  \frac{\alpha}{2} \|x\|^2 = \left(  A^\ast A + \alpha I \right )^{-1} A^\ast y^\delta .
\end{equation}
Throughout, we follow the notation and concepts of \cite{Louis1989}, which was the first to present a unified approach for regularization schemes in terms of so-called filter functions. These filter functions are based on the singular value decomposition of $A$, which has been  widely and successfully used for theoretical as well as numerical investigations, for the particular case of $A$ being the Radon transform see some classical results e.g. \cite{Louis1981, Quinto1983, Louis1984, Maass1987, Derevtsov2011}.

In this setting, the worst-case error is  defined as
\begin{equation}
    \mathrm{wc}(\alpha, \delta) \coloneqq \sup_{y^\delta\in Y}
    \bigl \{ \Vert T_\alpha y^\delta - x^\dagger \Vert  : Ax^\dagger = y^\dagger, \,\|y^\delta-y^\dagger\| \leq \delta\bigr\},
\end{equation}
which we will refine by a closed-form upper bound adapted to the Tikhonov regularization \eqref{eq:TikReg}.
Convergence rates for $x_\alpha^\delta \to x^\dagger$ in terms of $\delta$ require additional assumptions, typically stated as so-called source conditions on $x^\dagger$, see for example \cite{Louis1989,EnglHankeNeubauer1996, Hofmann1995,Rieder2000, Rieder2003,Beckmann2024}.
We adopt the following classical setting in the remainder of the paper.
\begin{assumption}\label{ass:1} There exist $z\in Y$ such that $x^\dagger = A^\ast z$ with $\|z\| \leq \rho$ for some $\rho > 0$.
\end{assumption}
The following result for Tikhonov regularization can be found (embedded in a more general framework) in the proof of~\cite[Thm.~4.2.3]{Louis1989}.
We include the proof for completeness and assume for simplicity that $A$ is normalized so that its operator norm is $\|A\| = 1$.
 
\begin{theorem}\label{thm:tikh_error}
Let $X,Y$ be Hilbert spaces and $A \colon X \rightarrow Y$ be compact with $\|A\| = 1$.
Further, let $y^\dagger = Ax^\dagger$ for $x^\dagger \in X$ and assume that Assumption~\ref{ass:1} holds.
If $\alpha>0$ and $y^\delta \in Y$ with $\| y^\delta - y^\dagger\| \leq \delta$,
then the reconstruction error satisfies
\begin{equation}\label{eq:ErrorBound}
    \| T_{\alpha}y^\delta - x^\dagger \| \leq \mathrm{wc}(\alpha,\delta) \leq \begin{cases}
        \frac{1}{2}\left( \frac{\delta}{\sqrt{\alpha}} + \sqrt{\alpha} \rho \right) &\text{if} \ 0 < \alpha \leq 1, \\
        \frac{1}{1+\alpha}\left( \delta + \alpha \rho \right) &\text{if} \ \alpha > 1.
    \end{cases}
\end{equation}
\end{theorem}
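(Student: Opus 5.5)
The plan is to split the error into a data (noise) part and an approximation part, and to bound each through the spectral calculus of $A^\ast A$, or equivalently through the SVD $(v_j,u_j,\sigma_j)$. The first inequality $\|T_\alpha y^\delta - x^\dagger\| \le \mathrm{wc}(\alpha,\delta)$ holds by definition of the supremum, so only the bound on $\mathrm{wc}$ needs work. Write
\[
T_\alpha y^\delta - x^\dagger = T_\alpha(y^\delta - y^\dagger) + \bigl(T_\alpha A - I\bigr)x^\dagger .
\]
The first term is bounded by $\|T_\alpha\|\,\delta$. For the second we use Assumption~\ref{ass:1}: with $x^\dagger = A^\ast z$,
\[
(T_\alpha A - I)A^\ast z = -\alpha (A^\ast A+\alpha I)^{-1}A^\ast z .
\]
Both terms involve the same operator $(A^\ast A+\alpha I)^{-1}A^\ast$, whose norm we estimate through its singular values.

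On the singular system, $(A^\ast A+\alpha I)^{-1}A^\ast u_j = \frac{\sigma_j}{\sigma_j^2+\alpha}v_j$. Since the $v_j$ are orthonormal, and the component of $z$ in $\ker A^\ast$ is annihilated, we get
\[
\|(A^\ast A+\alpha I)^{-1}A^\ast\| \le \sup_{0<\sigma\le 1} g_\alpha(\sigma), \qquad g_\alpha(\sigma) \coloneqq \frac{\sigma}{\sigma^2+\alpha},
\]
where the range $0<\sigma\le 1$ comes from the normalization $\|A\|=1$. Combining the two terms gives
\[
\mathrm{wc}(\alpha,\delta) \le (\delta + \alpha\rho)\,\sup_{0<\sigma\le 1} g_\alpha(\sigma).
\]

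The remaining step is a one-variable calculus problem, and the case split in the statement comes from where the supremum is attained. The derivative of $g_\alpha$ has the sign of $\alpha-\sigma^2$, so $g_\alpha$ increases up to $\sigma=\sqrt\alpha$ and decreases afterwards.
\begin{itemize}
\item If $\alpha\le 1$, the maximizer $\sqrt\alpha$ lies in $(0,1]$, and the supremum is $\frac{1}{2\sqrt\alpha}$. Then $(\delta+\alpha\rho)\frac{1}{2\sqrt\alpha} = \frac12\bigl(\frac{\delta}{\sqrt\alpha}+\sqrt\alpha\,\rho\bigr)$.
\item If $\alpha>1$, then $g_\alpha$ is increasing on $(0,1]$, so the supremum is $g_\alpha(1)=\frac{1}{1+\alpha}$. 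This yields the second bound.
\end{itemize}

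No step is a real obstacle; the work is mostly bookkeeping. Two points need care. First, the sup over $\sigma$ may be taken over the closed interval $(0,1]$ rather than only over the actual singular values, since this can only enlarge the bound. Second, the combination of the two terms must be done with a common factor rather than by bounding the terms separately with different constants. One should also note that the bound is uniform in $y^\delta$ and $z$ with $\|z\|\le\rho$, which is what justifies passing to the supremum defining $\mathrm{wc}(\alpha,\delta)$.
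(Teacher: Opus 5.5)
Your proof is correct and follows the paper's argument: you split the error into data and approximation parts, write both through the Tikhonov filter on the SVD, and maximize one scalar function with the same case split at $\sqrt{\alpha}=1$. The one difference is a streamlining. You note that $(1-F_\alpha(\sigma))\sigma = \alpha\,\sigma/(\sigma^2+\alpha)$, so both terms reduce to the single function $g_\alpha$, whereas the paper computes two separate suprema. Your ``common factor'' caution is therefore unnecessary: bounding the two terms separately, as the paper does, gives exactly the same constants.
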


\begin{proof}
We decompose the total error into data and approximation error
\begin{equation}\label{eq:ErrorDecompose}
    \| T_{\alpha} y^\delta - x^\dagger \| \leq \| T_{\alpha}(y^\delta - y^\dagger) \| + \| T_{\alpha}(A A^\ast z) - x^\dagger \|.
\end{equation}
Using the Tikhonov filter (following to the notion of A.\ Louis)
\begin{equation}
   F_{\alpha}(\sigma) = \frac{\sigma^2}{\sigma^2 +\alpha},
\end{equation}
and the SVD, we can write $T_{\alpha} = \sum_n F_{\alpha}(\sigma_j) \sigma_j^{-1} \langle \cdot, u_j \rangle v_j $.
Since $\|A\| = 1$ implies $0\le \sigma_j \le 1$, we can use the first-order optimality conditions to obtain the standard estimates
\begin{equation}\label{eq:EstFilter}
\begin{aligned}
    \sup_{\sigma_j} \left| F_{\alpha}(\sigma_j)\sigma_j^{-1} \right|
    &\leq
    \begin{cases}
        \frac{1}{2 \sqrt{\alpha}} & \text{if } 0 \leq \alpha \leq 1, \\
        \frac{1}{1 + \alpha} & \text{if } \alpha > 1,
    \end{cases}
    \\
    \sup_{\sigma_j} \left| (1 - F_{\alpha}(\sigma_j)) \sigma_j \right|
    &\leq
    \begin{cases}
        \frac{1}{2} \sqrt{\alpha} & \text{if } 0 \leq \alpha \leq 1, \\
        \frac{\alpha}{1 + \alpha} & \text{if } \alpha > 1.
    \end{cases}
\end{aligned}
\end{equation}
These bounds do not involve any knowledge on the actual spectrum of $A$.
In analogy to the proof of~\parencite[Thm.~3.4.3]{Louis1989}, we thus obtain the bounds
\begin{equation}\label{eq:EstDataError}
    \| T_{\alpha}(y^\delta - y^\dagger) \| \leq \sup_{\sigma_j} \bigl| F_{\alpha}(\sigma_j)\sigma_j^{-1} \bigr| \delta \leq \begin{cases}
        \frac{\delta}{2 \sqrt{\alpha}} &\text{if} \ 0 \leq \alpha \leq 1, \\
        \frac{\delta}{1 +\alpha} &\text{if} \ \alpha > 1,
    \end{cases}
\end{equation}
and
\begin{align}\label{eq:EstApproxError}
    \| T_{\alpha}(y^\dagger) - x^\dagger \| & = \| T_{\alpha}(A A^* z) - A^* z \| \leq  \sup_{\sigma_j} \left| (1 - F_{\alpha}(\sigma_j)) \sigma_j \right|  \rho 
    \leq \begin{cases}
        \frac{\sqrt{\alpha}}{2} \rho &\text{if} \ 0 \leq \alpha \leq 1, \\
        \frac{\alpha\rho}{1 +\alpha} &\text{if} \ \alpha > 1.
    \end{cases}
\end{align}
Inserting~\eqref{eq:EstDataError} and~\eqref{eq:EstApproxError} into \eqref{eq:ErrorDecompose} yields the result.
\end{proof}
Theorem~\ref{thm:tikh_error} gives an upper bound for the worst-case error $\mathrm{wc}(\alpha,\delta)$, which is sharp only if the upper bounds in \eqref{eq:EstFilter} are actually attained  by a singular value in the spectrum of $A$.
For clarity of presentation, we assume that this is the case, i.e., that \eqref{eq:ErrorBound} turns into
\begin{equation}\label{eq:DefWC}
    \mathrm{wc}(\alpha,\delta)  = \begin{cases}
        \frac{1}{2}\left( \frac{\delta}{\sqrt{\alpha}} + \sqrt{\alpha} \rho \right) &\text{if} \ 0 < \alpha \leq 1, \\
        \frac{1}{1+\alpha}\left( \delta + \alpha \rho \right) &\text{if} \ \alpha > 1.
    \end{cases}
\end{equation}
Now, it seems natural to choose $\alpha$ such that \eqref{eq:DefWC} becomes minimal. 
For our specific setting, this yields the choice $\alpha(\delta) \sim \delta$, as specified in the following remark, see also  \cite[Thm.~4.2.3]{Louis1989}. 

\begin{remark}\label{rem:wc_minimizer}
    For $\delta \leq \rho $, we have $\argmin_{\alpha>0} \mathrm{wc}(\alpha,\delta) = \nicefrac{\delta}{\rho}$.
    Otherwise, we have that $\alpha(\delta) = \infty$, and thus  $T_{\alpha}y^\delta =0$ is optimal.
    Hence, the parameter choice rule $\alpha \colon (0,\infty) \to (0,\infty]$ with
    \begin{equation}\label{eq:OptRegPara}
        \alpha(\delta) = \begin{cases}
             \frac{\delta}{\rho} &\text{if } \delta \leq \rho, \\
            \infty &\text{otherwise}
        \end{cases}
    \end{equation}
    is asymptotically optimal for $\delta \to 0$.
\end{remark}
Now, our main aim is to analyze the difference between the worst-case error $\mathrm{wc} (\alpha(\delta), \delta)$ for the optimally chosen regularization parameter and the error $\mathrm{wc} ( \alpha(\bar \delta), \delta)$ with a suboptimal regularization parameter $\alpha(\bar{\delta}) = \nicefrac{\bar\delta}{\rho}$.
In the latter case, we adapt the Tikhonov regularization scheme to data with noise level \smash{$\bar \delta$} and instead apply it to data with noise level \smash{$\delta \neq \bar \delta$}. 

\section{Worst-case error analysis for Tikhonov regularization}\label{sec:wc_error}

In this section, we consider the Tikhonov regularization $T_{\alpha(\bar\delta)}$, where $\alpha(\bar\delta) = \nicefrac{\bar\delta}{\rho}$ has been chosen according to \eqref{eq:OptRegPara} for the noise level \smash{$\bar \delta$}.
Then, we apply $T_{\alpha(\bar\delta)}$ to data \smash{$y^\delta$} for any noise level \smash{$\delta \neq \bar\delta$} and compare its approximation properties with the optimal reconstruction \smash{$x_{  
\alpha(\delta)}^\delta =T_{ \alpha(\delta)} y^\delta$}.
Here, we are interested in estimating the relative loss of accuracy $\mathrm{wc}(\alpha(\bar\delta), \delta)/\mathrm{wc}(\alpha(\delta), \delta)$ due to the suboptimal regularization parameter $\alpha(\bar\delta)$. 
To avoid case distinctions, we only report the results for $0< \alpha(\bar\delta), \alpha(\delta) <1$, namely the first case of \eqref{eq:DefWC}.

\begin{corollary}\label{cor:wc_error}
   For the parameter choice rule $\alpha$ in \eqref{eq:OptRegPara} and under the same assumptions as in Theorem~\ref{thm:tikh_error} as well as $0< \alpha, \alpha(\bar\delta), \alpha(\delta) <1$, the \emph{worst-case error} 
    \begin{equation}\label{eq:WC_Source}
        \mathrm{wc}(\alpha,\delta)=\frac{1}{2}\biggl( \frac{\delta}{\sqrt{\alpha}} + \sqrt{\alpha} \rho \biggr)
    \end{equation}
    satisfies  
    \begin{equation}\label{eq:rel_wcerror}
        \frac{\mathrm{wc}(\alpha(\bar\delta), \delta)}{\mathrm{wc}(\alpha(\delta), \delta)} = \frac{1}{2}\biggl(\frac{\sqrt{\delta}}{\sqrt{\bar \delta}} + \frac{\sqrt{\bar \delta}}{\sqrt{\delta}}\biggr).
    \end{equation}
    For $\delta = \lambda \bar \delta$ with $\lambda > 1$ such that $\delta < \rho$, we have 
    \begin{equation}
         \mathrm{wc}(\alpha(\bar\delta), \delta) =
             \mathcal{O}(\lambda), \quad \frac{\mathrm{wc}(\alpha(\bar\delta), \delta)}{\mathrm{wc}(\alpha(\delta), \delta)} = \mathcal{O}\bigl(\sqrt{\lambda}\bigr),
    \end{equation}
    and for $\lambda < 1$ it holds
    \begin{equation}
         \mathrm{wc}( \alpha(\bar\delta), \delta) = \mathcal{O}(1), \quad \frac{\mathrm{wc}(\alpha(\bar\delta), \delta)}{\mathrm{wc}(\alpha(\delta), \delta)} = \mathcal{O}\Big(\frac{1}{\sqrt{\lambda}}\Big).
    \end{equation}
\end{corollary}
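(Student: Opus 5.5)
The corollary follows by direct substitution into the closed form \eqref{eq:DefWC}, restricted to the first case. The assumptions $0<\alpha(\bar\delta),\alpha(\delta)<1$ guarantee that both $\alpha(\bar\delta)=\nicefrac{\bar\delta}{\rho}$ and $\alpha(\delta)=\nicefrac{\delta}{\rho}$ come from the first branch of \eqref{eq:OptRegPara}. The same assumptions guarantee that the first case of \eqref{eq:DefWC} applies, so \eqref{eq:WC_Source} holds for both parameters.

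First, I evaluate the optimal error. Inserting $\alpha=\nicefrac{\delta}{\rho}$ into \eqref{eq:WC_Source} gives
\[
\mathrm{wc}(\alpha(\delta),\delta)=\tfrac12\bigl(\sqrt{\delta\rho}+\sqrt{\delta\rho}\bigr)=\sqrt{\delta\rho}.
\]
Next, I evaluate the mismatched error. Inserting $\alpha=\nicefrac{\bar\delta}{\rho}$ gives
\[
\mathrm{wc}(\alpha(\bar\delta),\delta)=\tfrac12\Bigl(\frac{\delta\sqrt{\rho}}{\sqrt{\bar\delta}}+\sqrt{\bar\delta\rho}\Bigr).
\]
Dividing the second expression by $\sqrt{\delta\rho}$ yields exactly \eqref{eq:rel_wcerror}. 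This is purely algebraic.

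For the asymptotics I substitute $\delta=\lambda\bar\delta$. The mismatched error becomes
\[
\mathrm{wc}(\alpha(\bar\delta),\delta)=\tfrac12\sqrt{\bar\delta\rho}\,(\lambda+1).
\]
The ratio becomes $\tfrac12\bigl(\sqrt\lambda+1/\sqrt\lambda\bigr)$. With $\bar\delta$ and $\rho$ fixed, the case $\lambda>1$ gives $\lambda+1\le 2\lambda$ and $\sqrt\lambda+1/\sqrt\lambda\le 2\sqrt\lambda$. Hence the error is $\mathcal O(\lambda)$ and the ratio is $\mathcal O(\sqrt\lambda)$. The side condition $\delta<\rho$ keeps $\alpha(\delta)<1$, so the formulas remain valid. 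For $\lambda<1$ we have $\lambda+1<2$, so the error is $\mathcal O(1)$. We also have $\sqrt\lambda<1/\sqrt\lambda$, so the ratio is at most $1/\sqrt\lambda$, which gives $\mathcal O(1/\sqrt\lambda)$.

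The only real subtlety is interpretive. The constants hidden in $\mathcal O$ depend on $\bar\delta$ and $\rho$. In the regime $\lambda>1$, the growth in $\lambda$ is bounded above by the requirement $\lambda\bar\delta<\rho$. I would state explicitly that the $\mathcal O$-statements are understood for fixed $\bar\delta,\rho$ as functions of $\lambda$ within the admissible range. No earlier result beyond \eqref{eq:DefWC} and Remark~\ref{rem:wc_minimizer} is needed.
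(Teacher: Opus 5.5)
Your proposal is correct and follows the paper's proof essentially step by step. You substitute $\alpha(\bar\delta)=\bar\delta/\rho$ and $\alpha(\delta)=\delta/\rho$ into \eqref{eq:WC_Source}, then set $\delta=\lambda\bar\delta$ to obtain $\tfrac12\sqrt{\rho\bar\delta}\,(\lambda+1)$ and $\tfrac12\bigl(\sqrt\lambda+1/\sqrt\lambda\bigr)$. Your explicit inequalities, and your remark that the $\mathcal O$-constants depend on $\bar\delta,\rho$ with $\lambda<\rho/\bar\delta$ in the first regime, make precise what the paper states loosely as the limits $\lambda\to\infty$ and $\lambda\to0$.
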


\begin{proof}
    By the definition of the worst-case error~\eqref{eq:WC_Source} and the parameter choice rule~\eqref{eq:OptRegPara}, it holds
    \begin{equation}
        \frac{\mathrm{wc}(\alpha(\Bar \delta), \delta)}{\mathrm{wc}(\alpha(\delta), \delta)} = \frac{\sqrt{\rho}\Bigl(\frac{\delta}{\sqrt{\Bar{\delta}}} + \sqrt{\Bar \delta}\Bigr)}{2\sqrt{\rho \delta}} = \frac{\frac{\delta}{\sqrt{\Bar{\delta}}} + \sqrt{\Bar{\delta}}}{2 \sqrt{\delta}} = \frac{1}{2}\biggl( \frac{\sqrt{\delta}}{\sqrt{\bar\delta}} + \frac{\sqrt{\bar\delta}}{\sqrt{\delta}}\biggr).
    \end{equation}
    Setting $\delta = \lambda \Bar{\delta} < \rho$ gives
    \begin{equation}
        \mathrm{wc}(\alpha(\bar\delta),\delta) = \frac{\sqrt{\rho}}{2}\Bigl(\lambda \sqrt{\Bar{\delta}} +  \sqrt{\Bar{\delta}}\Bigr) = \frac{\sqrt{\rho \bar\delta}}{2}(\lambda + 1).
    \end{equation}
    Furthermore, we have 
    \begin{equation}
        \frac{\mathrm{wc}(\alpha(\bar\delta), \delta)}{\mathrm{wc}(\alpha(\delta), \delta)} = \frac{1}{2}\left( \frac{\lambda \bar \delta + \bar \delta }{\sqrt{\lambda} \bar \delta}\right) = \frac{1}{2}\left( \sqrt{\lambda} + \frac{1}{\sqrt{\lambda}}\right),
    \end{equation}
    yielding $\mathcal{O}(\sqrt{\lambda})$ as $\lambda \to \infty$ and $\mathcal{O}(1/\sqrt{\lambda})$ as $\lambda \to 0$.
\end{proof}
According to Corollary~\ref{cor:wc_error}, if we determine $\alpha$ using a reference noise level $\bar\delta$ and let the actual noise level grow as $\delta = \lambda \bar\delta$ with $\lambda \to \infty$ (until $\alpha(\delta)\geq  1$), then the relative worst-case error with respect to the optimal $\alpha$ grows merely as $\mathcal{O}(\sqrt{\lambda})$.
Thus, the deterioration caused by a severely miss-specified $\alpha$ is less dramatic than one might expect at first glance. 

\subsection{Experimental comparison setup}\label{subsec:num_exp_full}

To complement our theoretical analysis, we now turn to numerical experiments that compare classical Tikhonov regularization and data-driven reconstruction methods for linear inverse problems.
Specifically, we investigate reconstruction errors for
\begin{itemize}
\item Tikhonov regularization,
\item Learned Primal-Dual (LPD) reconstruction~\cite{AdlerOektem2018LPD}, which takes $x^{(0)}=0$ and $p^{(0)}=0$ and unrolls $K$ primal-dual iterations
\begin{equation}
\begin{aligned}
    p^{(\ell+1)} &= p^{(\ell)} + \sigma_\ell \Gamma_\ell\bigl(p^{(\ell)}, Ax^{(\ell)} - y^{\bar\delta}\bigr), \\
    x^{(\ell+1)} &= x^{(\ell)} + \tau_\ell \Lambda_\ell \bigl(x^{(\ell)}, A^\ast p^{(\ell+1)}\bigr),
\end{aligned}
\qquad \ell=0,\dots,K-1,
\end{equation}
where the dual and primal networks $\Gamma_\ell$ and $\Lambda_\ell$ and the step sizes $\sigma_\ell, \tau_\ell \in \R$ are learned.
\item Learned iterative Shrinkage Algorithm~(LISTA)~\cite{Gregor2010}, which takes $x^{(0)} = 0$ and unrolls $K$ iterations of ISTA
\begin{equation}
    x^{(\ell)} = S_\lambda (Wx^{(\ell-1)} + B y^{\bar\delta}), \qquad \ell = 1,\ldots , K,
\end{equation}
with the soft shrinkage $S_\lambda = \mathrm{ReLU}(x-\lambda)- \mathrm{ReLU}(-x-\lambda)$ and shared weights\footnote{Numerical evidence showed similar results for the case of using unshared weights across layers.} $W \in \R^{n \times n}$, $B \in \R^{n\times m}$.
Here, $\Theta = (W,B,\lambda)$ are the learnable parameters of the network.
\end{itemize}
To compare these, we consider two ill-posed inverse problems.
The first one is based on the Radon operator, while the second involves integration.
Throughout, we consider the finite-dimensional spaces $X = \R^n$ and $Y = \R^m$ and equip them with the discretized $L^2$-norms $\|\cdot\|_X^2 \coloneqq \frac{1}{n}\|\cdot\|_2^2$ and $\|\cdot\|_Y^2 \coloneqq \frac{1}{m}\|\cdot\|_2^2$.
We train both  LISTA and LPD following the details from the respective papers for multiple but fixed noise levels \smash{$\bar\delta$}, which then implicitly act as regularization parameter. By now, there also exists a rich mathematical theory for such learned methods, see e.g. \cite{Arridge2019,Dittmer2018, Arndt2023}.

\paragraph{Radon operator.}
Using MNIST images together with a discretized Radon operator is a common benchmark in the data-driven inverse problems literature \cite{Adler2022, Arndt2023, Aspri2018}.
The MNIST data set consists of $L_\mathrm{train} = 60\,000$ training images and $L_\mathrm{test} = 10\,000$ test images with size $28 \times 28 = 784$ pixels.
Following the standard pixel-based discretization of the Radon transform, see for example~\cite[Sec.~6.4]{Buzug08}, we obtain a linear operator $A_R\colon \mathbb{R}^{784} \to\mathbb{R}^{30 \cdot 41}$ that models line integrals of the images $x$ along 30 equidistant projection angles with 41 detector offsets.
Numerically, we evaluate the integrals using the \texttt{radon}-routine from the Python library \texttt{scikit-image}~\cite{vanderwalt2014} and store the resulting linear operator as a dense matrix (which is of course only possible due to the small size of the MNIST images).
The latter is normalized by its spectral norm. 
The data tuples \smash{$(x_i, y^{\bar\delta}_i)$} for the inverse problem are simulated as 
\smash{$y_i^{\bar\delta} = A_R x_i + \eta_i$} with noise $\eta_i \sim \mathcal{N}(0, \bar \delta^2 I_m)$ so that $\mathbb E (\| y_i - y^{\bar\delta}_i \|^2_Y)  = \E (\frac{1}{m}\sum_{j=1}^m |y_{i,j} - y_{i,j}^{\bar\delta}|^2 ) = \bar\delta^2$.
We evaluate all  methods on a subset of the test set containing $L = 50$ images.
For LISTA, we use $K=20$ layers.
For LPD, we perform $K=20$ iterations where both the primal and dual networks are implemented as CNNs consisting of three layers with kernel size ${3\times3}$ and $2 \rightarrow 32 \rightarrow 32 \rightarrow 1$ channels, respectively.
As activation function, we use the Parametric Rectified Linear Units~(PReLU) $\mathrm{PReLU}_{c}(x) = \mathrm{ReLU}(x) -c \mathrm{ReLU}(-x)$ with a learnable parameter $c \in \R$ that is shared across channels.
Note that LPD explicitly exploits the underlying 2D spatial structure of the problem.

\paragraph{Integration operator.}
For $n \in \N$, we define the discrete integration operator $\tilde A_I \in \mathbb{R}^{n \times n}$ via
\begin{equation}\label{eq:IntOp}
   \tilde A_I = \begin{pmatrix}
    1 & 0 & 0 & \cdots & 0 \\
    1 & 1 & 0 & \cdots & 0 \\
    1 & 1 & 1 & \cdots & 0 \\
    \vdots & \vdots & \vdots & \ddots & \vdots \\
    1 & 1 & 1 & \cdots & 1
    \end{pmatrix}_{n \times n}  
\end{equation}
and its normalized counterpart $A_I = \tilde A_I / \|\tilde A_I\|_2$.
Note that the singular values of \eqref{eq:IntOp} scale roughly as $n^{-1}$.
We set $n=50$ in~\eqref{eq:IntOp} and generate $L_\mathrm{train} = 60\, 000$ training pairs \smash{$(x_i, y_i^{\delta})_{i = 1}^{L_\mathrm{train}}$} by sampling
\begin{equation}
    z_i = \sum_{j=1}^n d_j u_j \quad \text{with} \quad d_j \sim \mathcal{U}[-1,1],
\end{equation}
where $u_j$ are the left singular vectors of $A$, in order to obtain data 
\begin{equation}\label{eq:zdata_gen}
    x_i = A^\ast z_i \quad \text{with} \quad \|z_i\|_Y = \biggl(\frac{1}{n}\sum_{j=1}^n |d_j|^2\biggr)^{1/2} = \rho_i \quad\text{for} \ i = 1,\dots,L_\mathrm{train},
\end{equation}
which fulfill the source condition in Assumption~\ref{ass:1}. We generate the associated $y_i, y_i^{\bar\delta}$ analogously to $A_R$. 
As test data, we generate $L = 50$ test tuples with the same protocol.
The LISTA approach uses $K=30$ iterations.
For the LPD, we use $K=30$ iterations and fully connected networks~(FCN) as the primal and the dual networks since we are no longer concerned with image data.
Each FCN consists of two layers of width $2n \rightarrow 128 \to 128 \rightarrow n$ and a ReLU-activation after the first two layers.

\subsection{Numerical results}\label{sec:WorstCaseNumeric}
For all three approaches, we denote by \smash{$x_{\bar\delta}^{\delta}$} the reconstruction from a measurement $y^\delta$ with noise level $\delta$ using the instance that is optimal for \smash{$\bar\delta$}.
For LISTA and LPD, this corresponds to the network that has been trained with $\bar\delta \in \{0.001, 0.01, 0.1, 0.2, 0.5, 1\}$, and for Tikhonov regularization this corresponds to choosing $\alpha(\bar\delta)=\bar\delta / \rho$, where the source constant $\rho$ is estimated from the data.
Specifically, for each image $x_i$, we derive the source constant $\rho_i$ in Assumption~\ref{ass:1} by finding the minimum-norm solution $z_i$ to $x_i = A^* z_i$ (and checking that the residual is zero).
Using the Moore-Penrose pseudoinverse, we get $\rho_i \coloneqq \|(A^*)^\dagger x_i\|_Y$.
To obtain a single, representative constant, we compute the average \smash{$\rho \coloneqq \frac{1}{L} \sum_{i=1}^L \rho_i$}.
While this characterizes the typical source condition of the samples, a strict source constant for all $x_i$ would instead require taking the more conservative—and perhaps overly pessimistic—maximum of all $\rho_i$.

Figure~\ref{fig:example} shows representative Tikhonov-based reconstructions for both operators, illustrating the experimental setup.
Across all experiments, the resulting reconstruction errors are averaged over 100 independent noise realizations per data sample.
\begin{figure}[tbp]
    \centering
    \begin{subfigure}[t]{0.49\textwidth}
    \includegraphics[width=\linewidth]{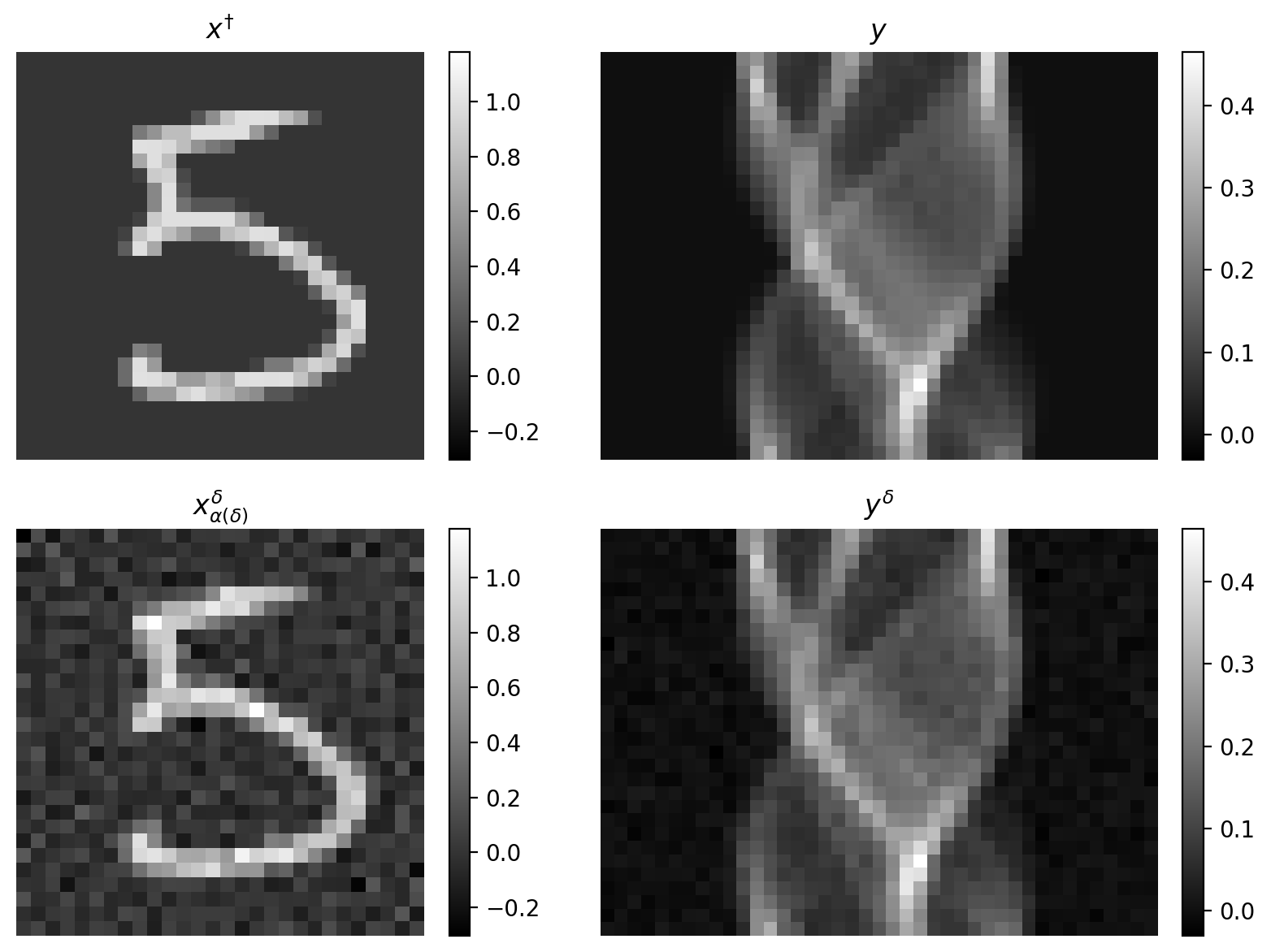}
    \end{subfigure}
    \hfill
    \begin{subfigure}[t]{0.45\textwidth}
        \includegraphics[width=0.99\linewidth]{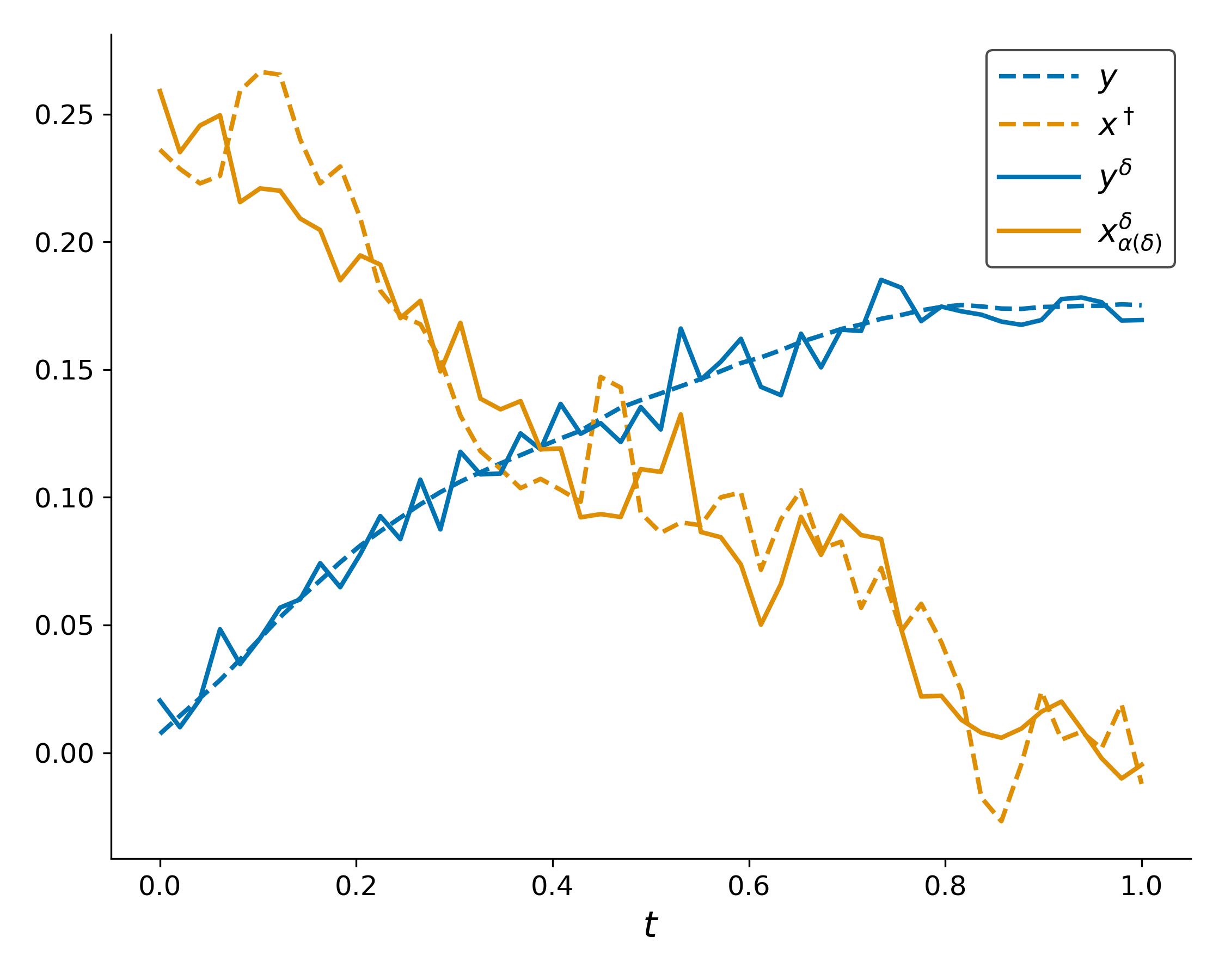}
    \end{subfigure}
    \caption{Test data $x^\dagger$ for both inverse problems together with noise-free and noise-perturbed measurements $(y,y^\delta)$ and the corresponding Tikhonov reconstruction \smash{$x^\delta_{\alpha(\delta)}$} of $y^\delta$, using $\delta = 0.01$ and $\alpha(\delta) = \nicefrac{\delta}{\rho}$. \emph{(left)}~MNIST test image and measurement generated via $A_R$,
    \emph{(right)}~test sample from~\eqref{eq:zdata_gen} and measurements generated via $A_I$ at $n=50$.
    }
    \label{fig:example}
\end{figure}
For Tikhonov regularization, the analytical worst-case errors as in Corollary~\ref{cor:wc_error} are exemplified in Figure~\ref{fig:wc_err_radon} for $A_R$.
Further, the average and relative average errors
\begin{equation}\label{eq:num_error}
    \mathcal{E}(\bar\delta,\delta) = \frac{1}{L}\sum_{i=1}^{L}\| x^{\delta}_{i,\bar\delta} - x_i\|_X  \quad \text{ and } \quad \frac{\mathcal{E}(\bar\delta,\delta)}{\mathcal{E}(\delta,\delta)}
\end{equation}
for the two inverse problems and all three reconstruction approaches are visualized in Figure~\ref{fig:wc_grid_int}.
For the LPD and LISTA approaches, one point of $\mathcal{E}(\bar\delta,\delta)$ plotted against $\bar{\delta}$ corresponds to a network trained on data \smash{$y^{\bar\delta}$} and tested on data $y^\delta$.
We compare the errors both for varying $\delta$ and $\bar \delta$.
First, we observe that the empirical Tikhonov errors always lie below the theoretical worst-case bound from Theorem~\ref{thm:tikh_error} as expected.
Although not guaranteed theoretically, the learned approaches also stay below this reference error for all choices of $\bar\delta$.

For Tikhonov regularization, plotting the error $\mathcal{E}(\bar\delta,\delta)$ versus $\delta$ at fixed $\bar\delta$ (first rows) shows a linear growth in $\delta$ for large $\delta$ across all reconstruction approaches, consistent with the data-error dominating for fixed parameter choice $\alpha$.
The varying error magnitudes are caused by the parameter $\rho$, which determines the slope of the error bounds when plotted against $\delta$.
The respective values for $\rho$ are given in the figure captions.
The intercept of $\mathcal{E}(\bar\delta,\delta)$ at $\delta=0$ quantifies the approximation error.
For the theoretical bound and the learned methods, it decreases as $\bar\delta$ decreases, while the slope with respect to $\delta$ increases with decreasing $\bar\delta$ as predicted by the decomposition in Theorem~\ref{thm:tikh_error}.

The absolute reconstruction errors of the learned methods tend to outperform the Tikhonov reconstruction.
In particular, LPD consistently yields the smallest absolute errors. 
This behavior can be attributed to the structure of the data.
MNIST images are well approximated by low-dimensional manifolds, while in the case of the integral operator the data is restricted to satisfy a source condition.
Learned methods capture such (potentially nonlinear) structures during training, whereas Tikhonov regularization does not utilize any training data. 
Unsurprisingly, the learned relative error curves reach their minima at $\delta = \bar\delta$ with a similar behavior as the worst-case Tikhonov error.
This indicates that we also have a sublinear convergence as in~\eqref{eq:rel_wcerror} for the learned methods.
Overall, the learned methods seem to admit a very similar behavior as Tikhonov regularization with respect to noise mismatch.

\begin{figure}[tbp]
    \centering
    \begin{subfigure}[t]{0.32\textwidth}
    \includegraphics[width=\linewidth]{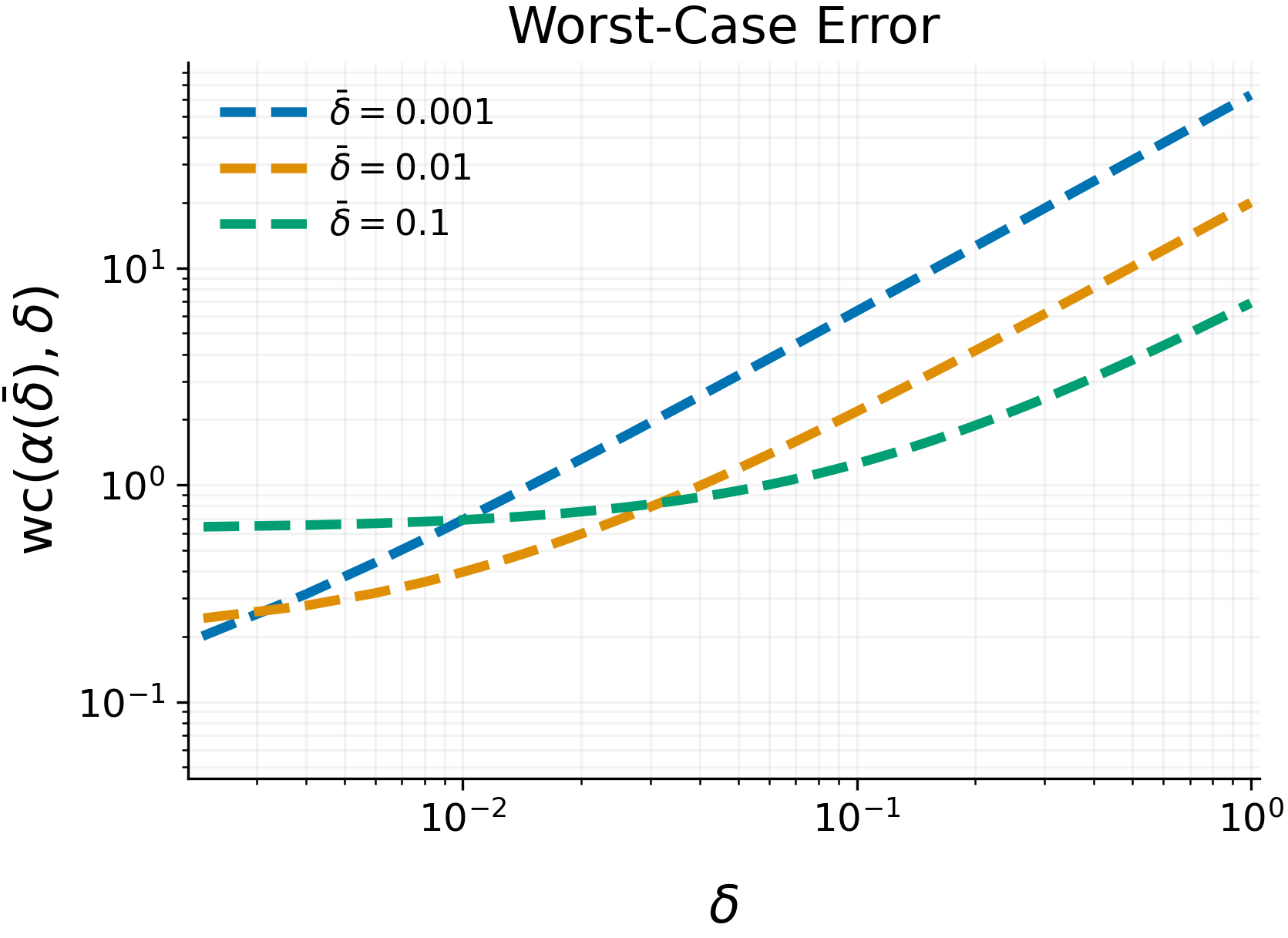}
    \end{subfigure} 
    \hfill
    \begin{subfigure}[t]{0.32\textwidth}
     \includegraphics[width=\linewidth]{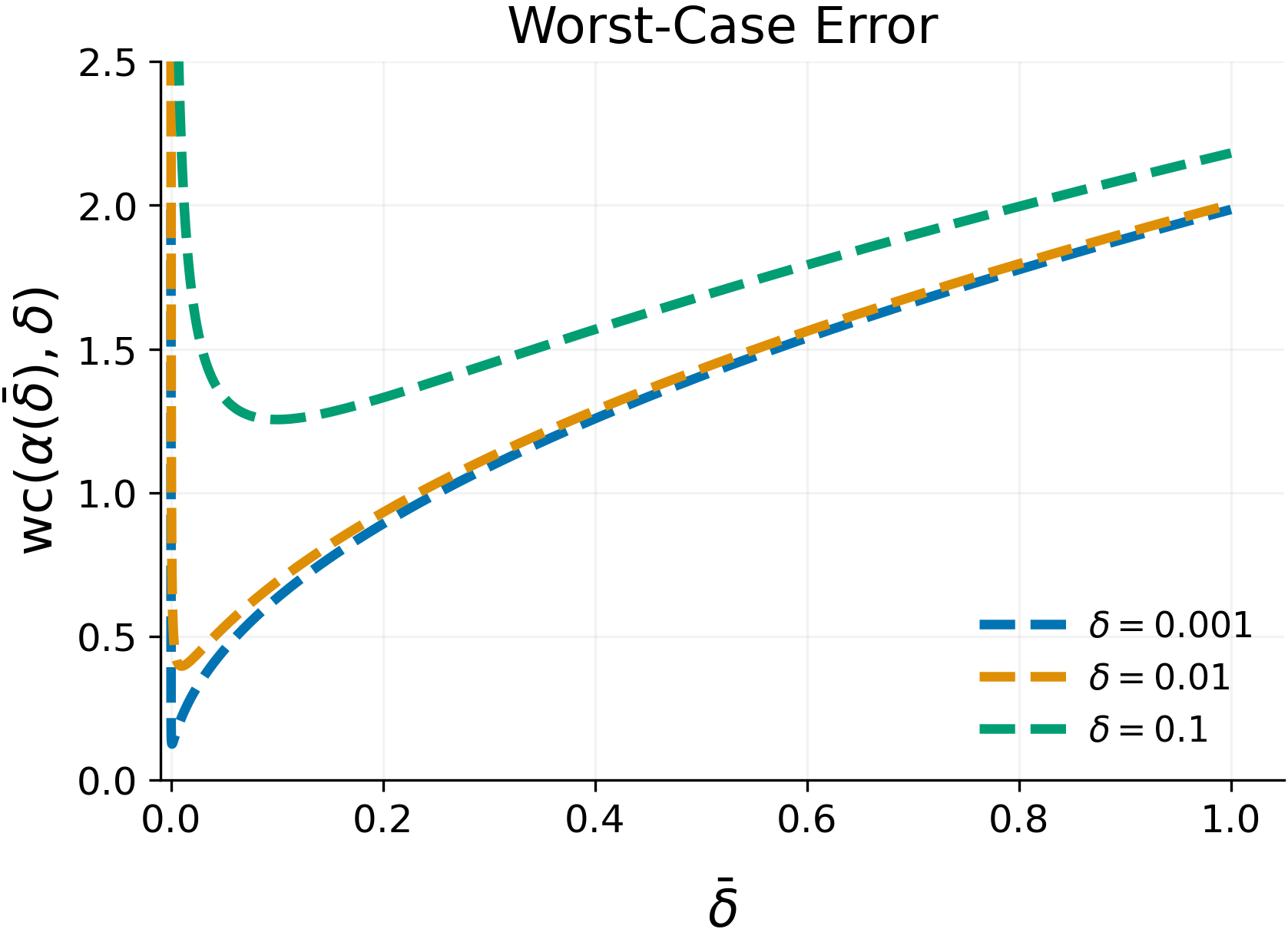}
    \end{subfigure}
    \hfill
    \begin{subfigure}[t]{0.32\textwidth}
     \includegraphics[width=\linewidth]{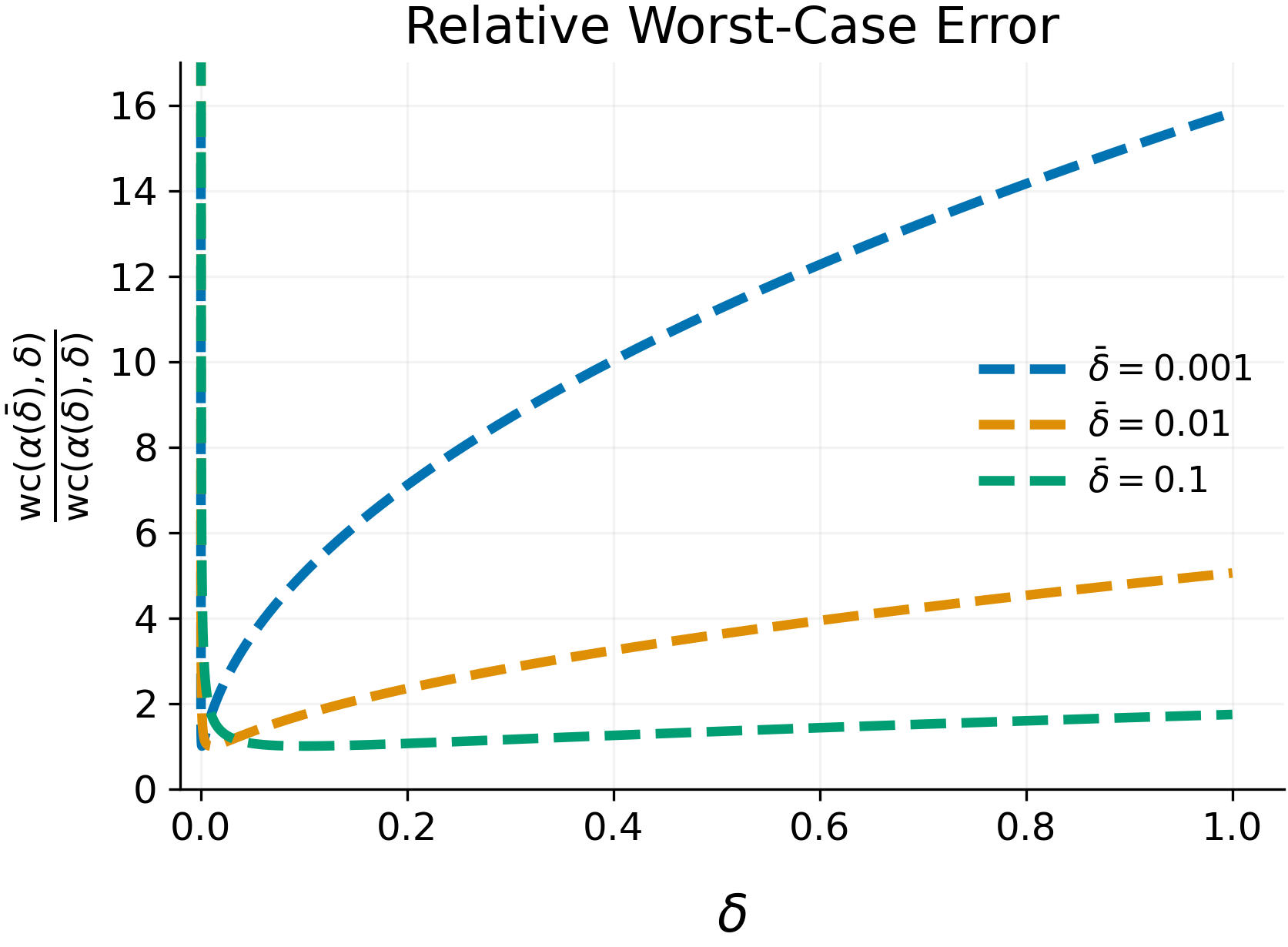}
    \end{subfigure}
    \caption{Analytical wc and relative wc errors as in Corollary~\ref{cor:wc_error} for $A_R$ with $\alpha(\delta) = \nicefrac{\delta}{\rho}$ and ${\rho = 15.74}$.}
    \label{fig:wc_err_radon}
\end{figure}

\begin{figure}[tbp]
  \centering
  \captionsetup[subfigure]{justification=centering}

  \begin{subfigure}[t]{0.28\textwidth}\centering
    \includegraphics[width=\linewidth]{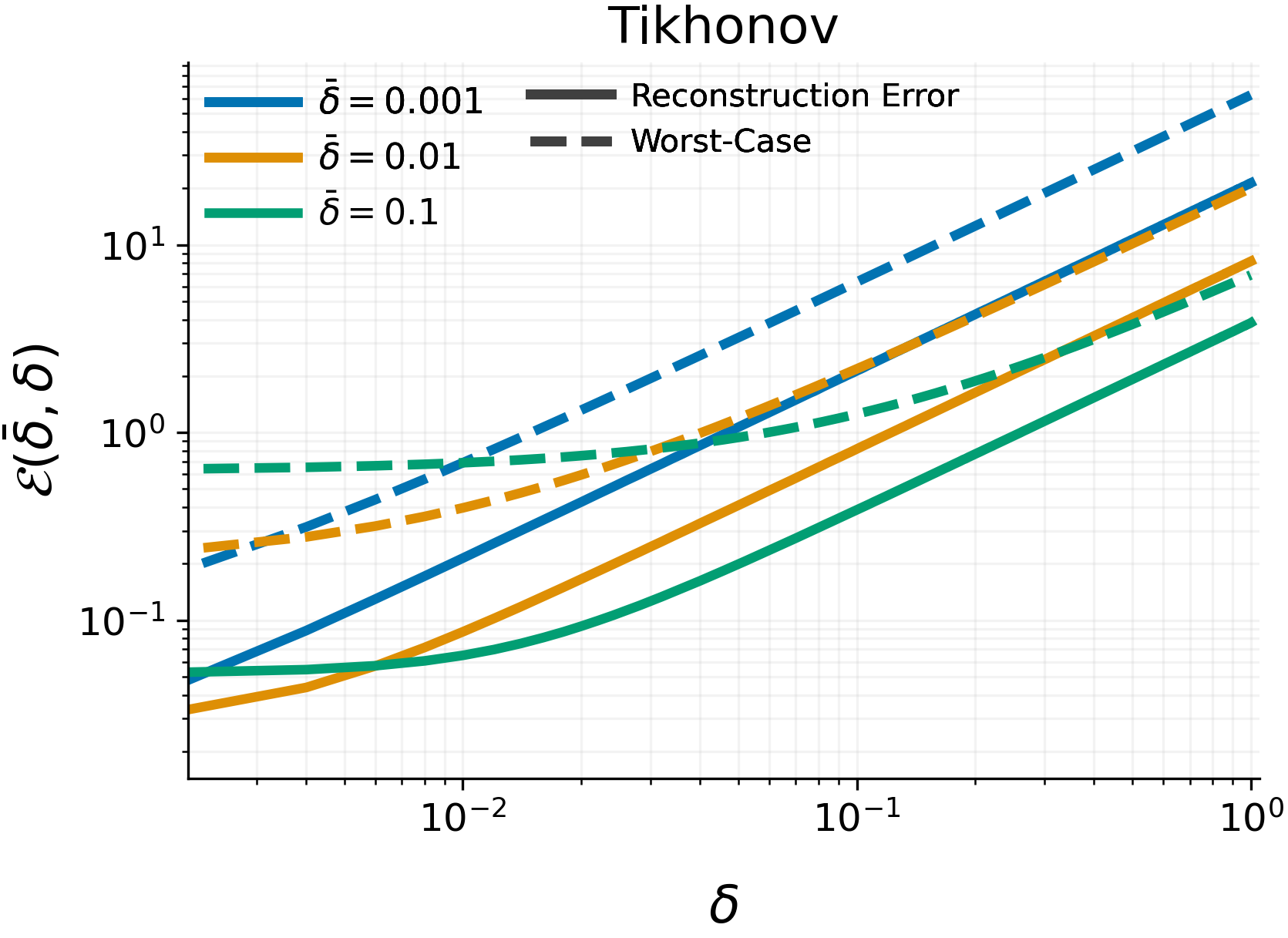}
  \end{subfigure}\hfill
  \begin{subfigure}[t]{0.28\textwidth}\centering
    \includegraphics[width=\linewidth]{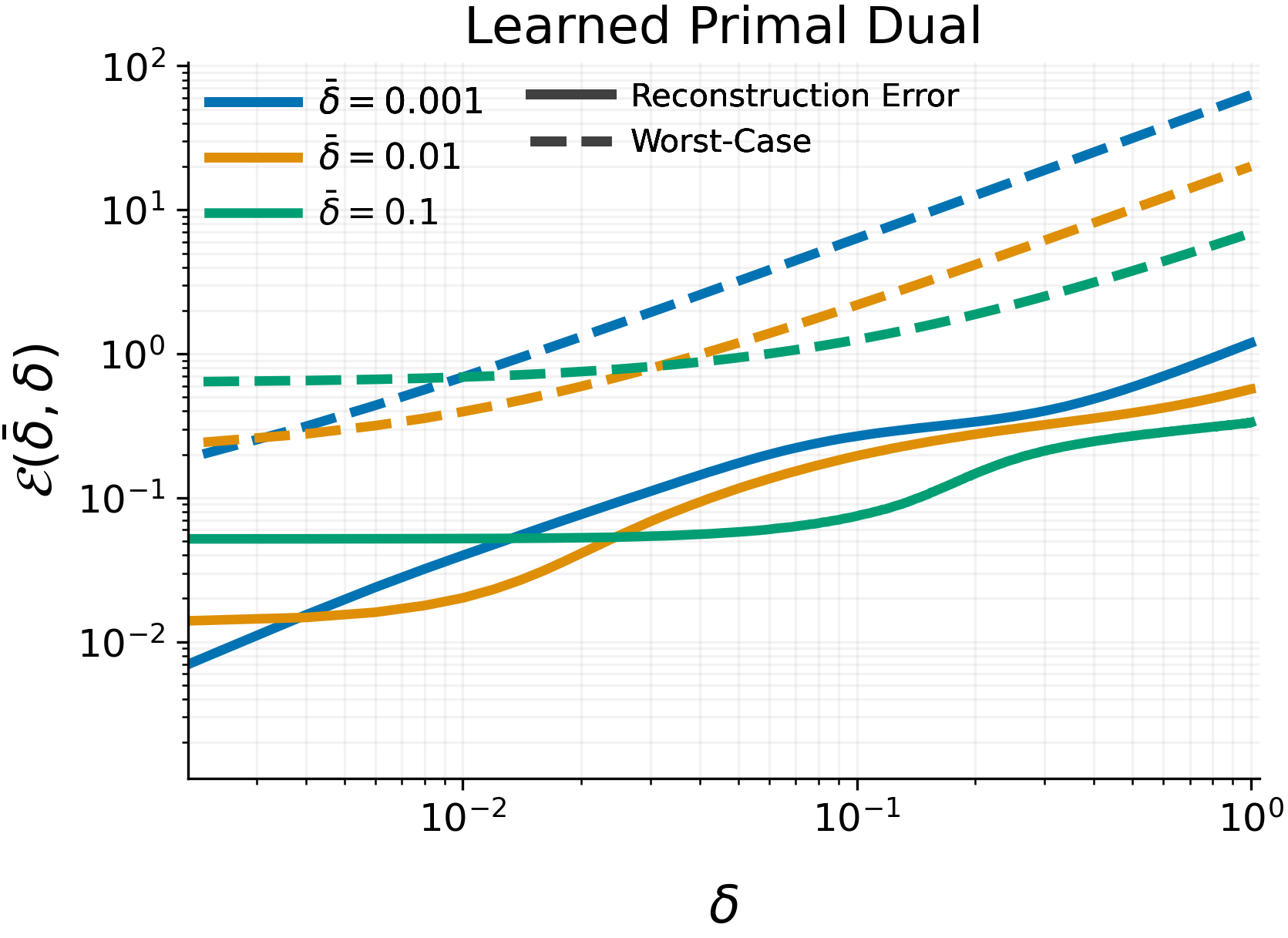}
  \end{subfigure}\hfill
  \begin{subfigure}[t]{0.28\textwidth}\centering
    \includegraphics[width=\linewidth]{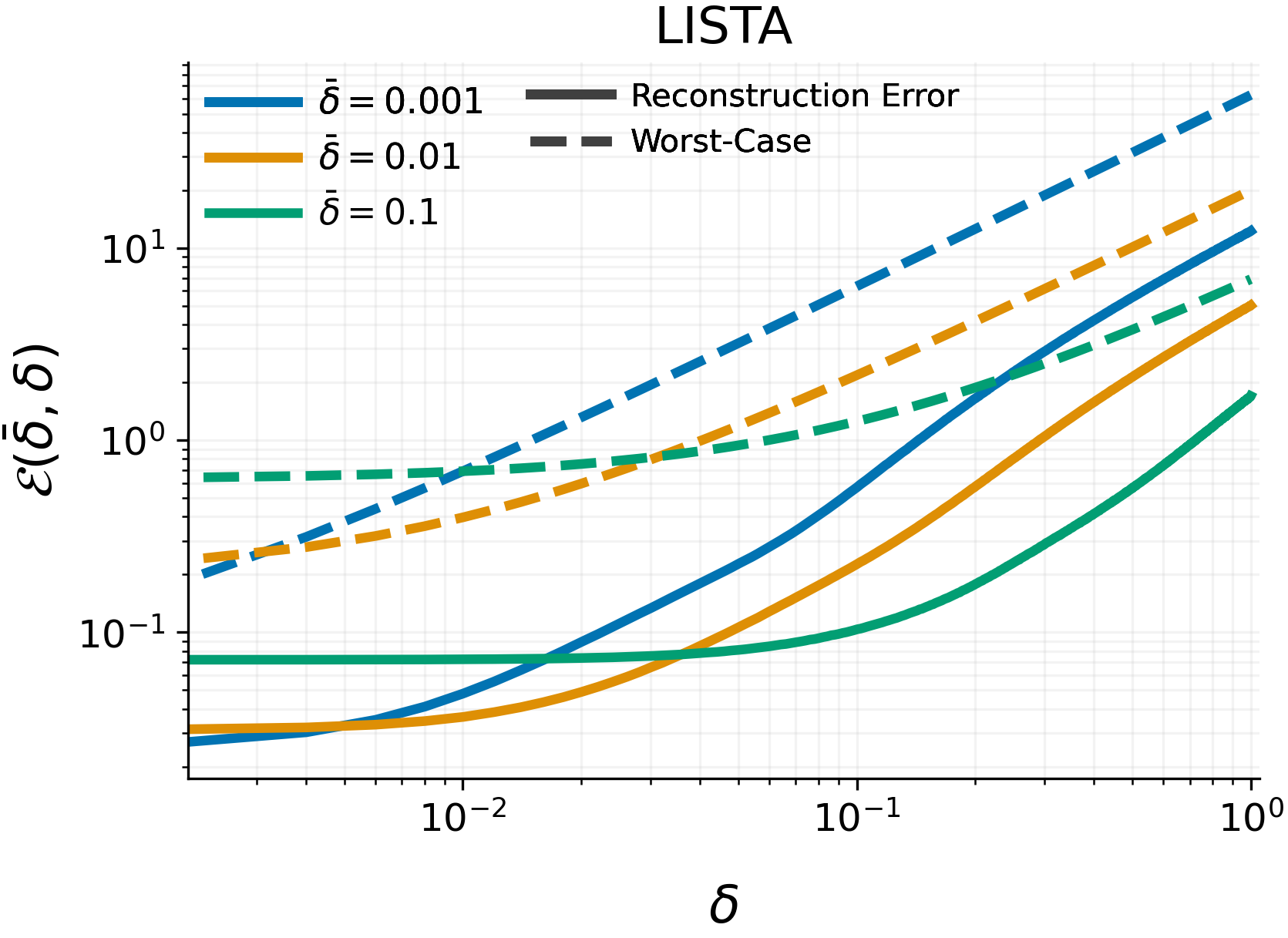}
  \end{subfigure}

  \begin{subfigure}[t]{0.28\textwidth}\centering
    \includegraphics[width=\linewidth]{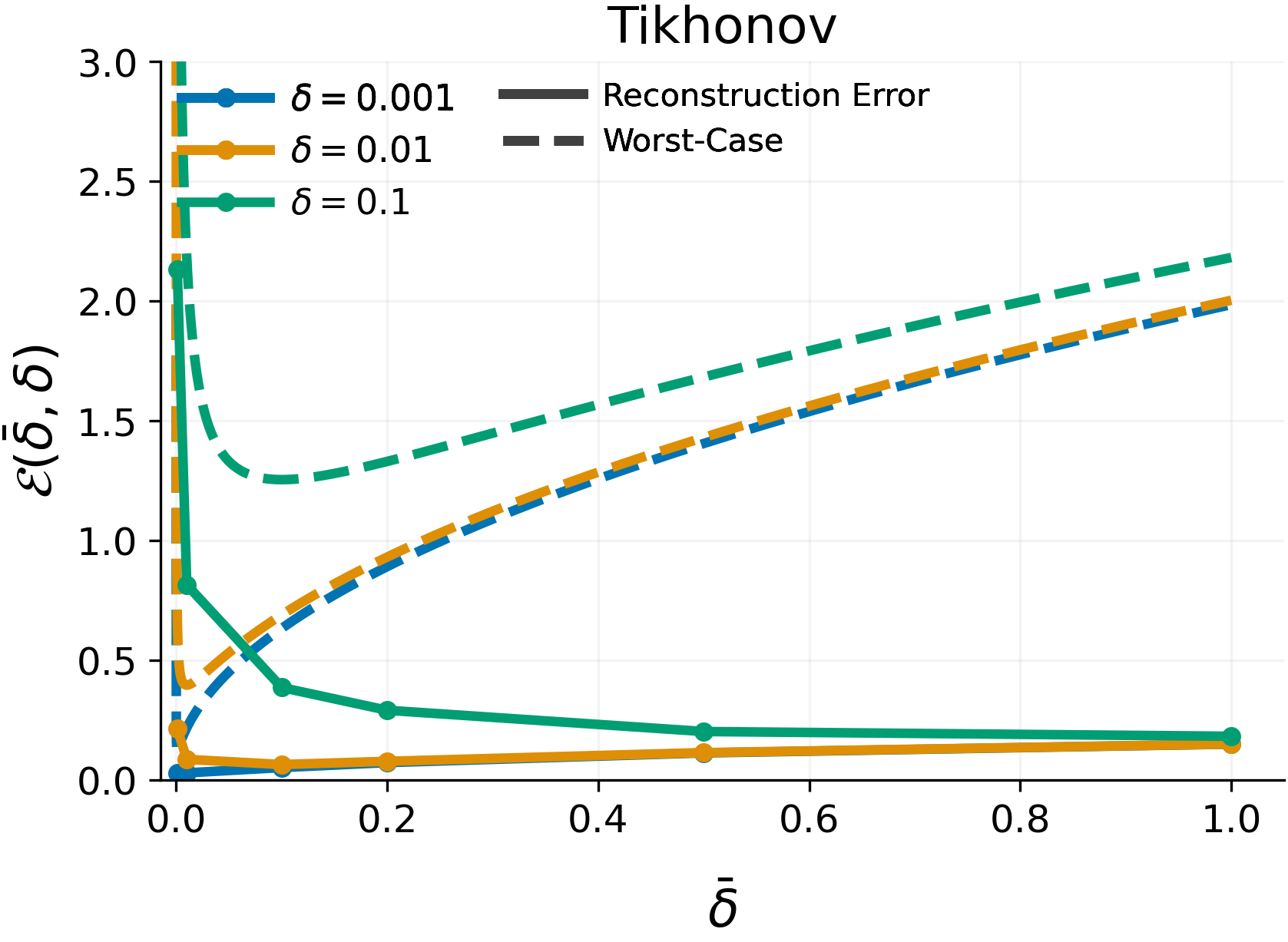}
  \end{subfigure}\hfill
  \begin{subfigure}[t]{0.28\textwidth}\centering
    \includegraphics[width=\linewidth]{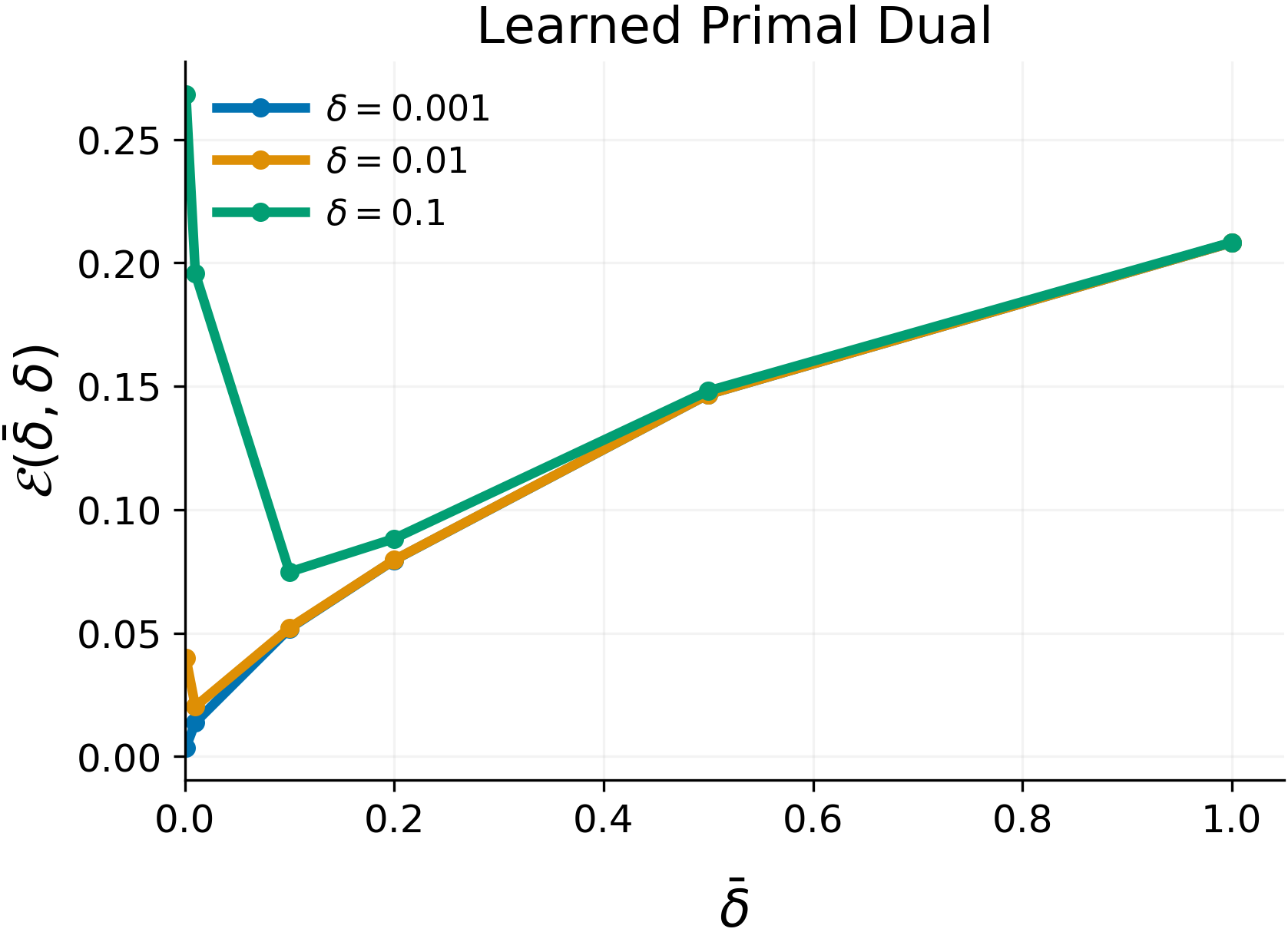}
  \end{subfigure}\hfill
  \begin{subfigure}[t]{0.28\textwidth}\centering
    \includegraphics[width=\linewidth]{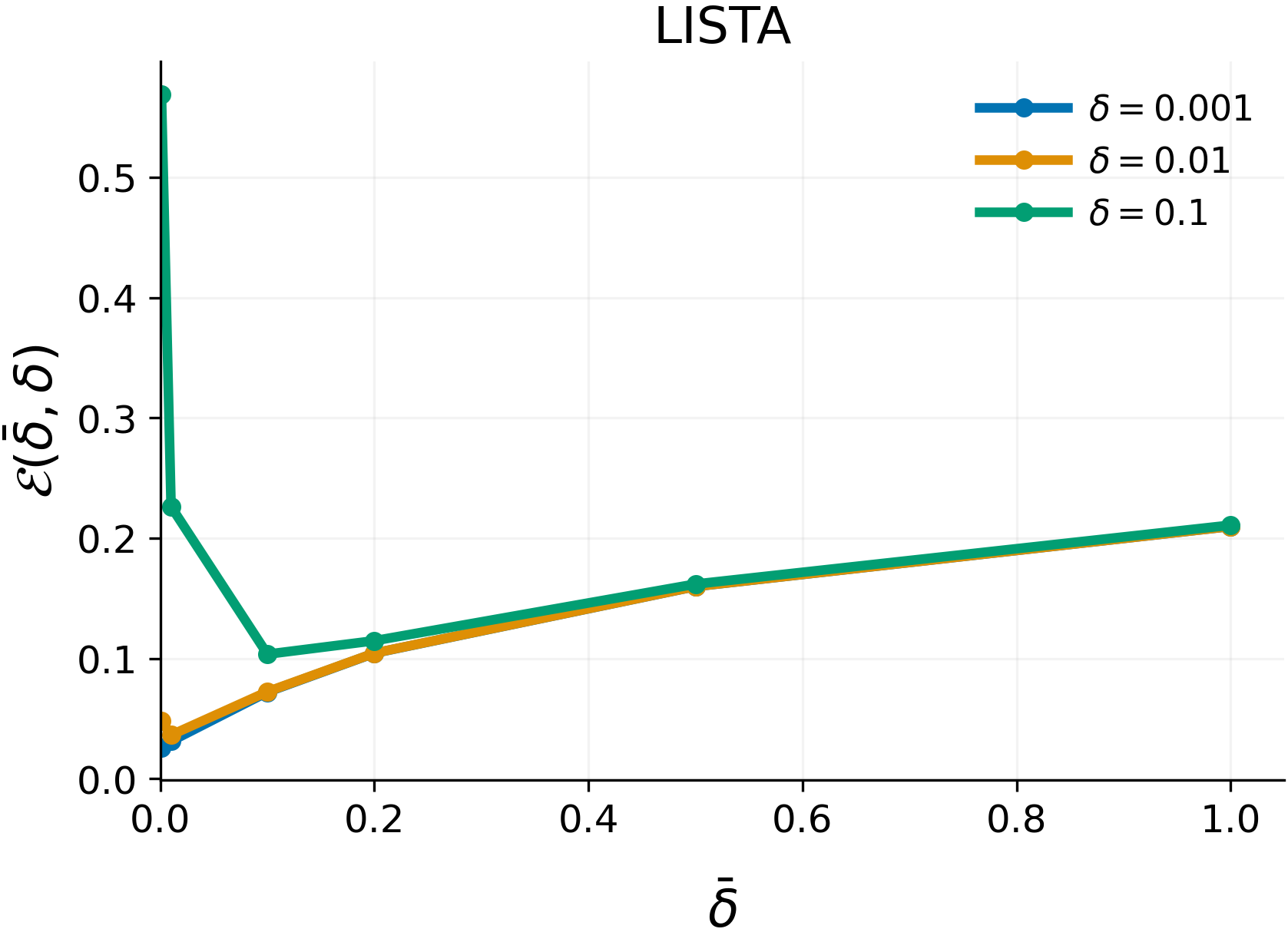}
  \end{subfigure}
  
  \begin{subfigure}[t]{0.28\textwidth}\centering
    \includegraphics[width=\linewidth]{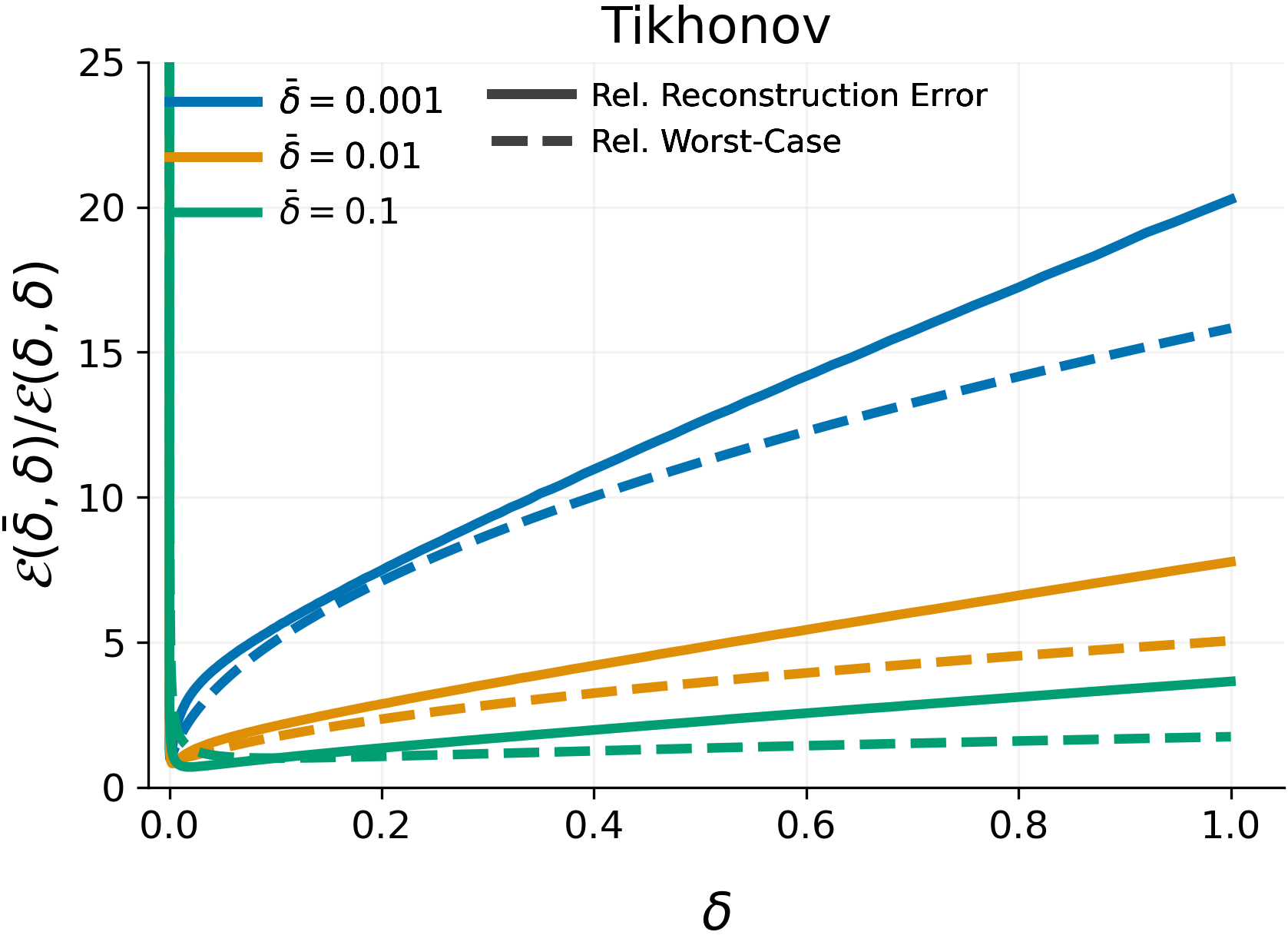}
  \end{subfigure}\hfill
  \begin{subfigure}[t]{0.28\textwidth}\centering
    \includegraphics[width=\linewidth]{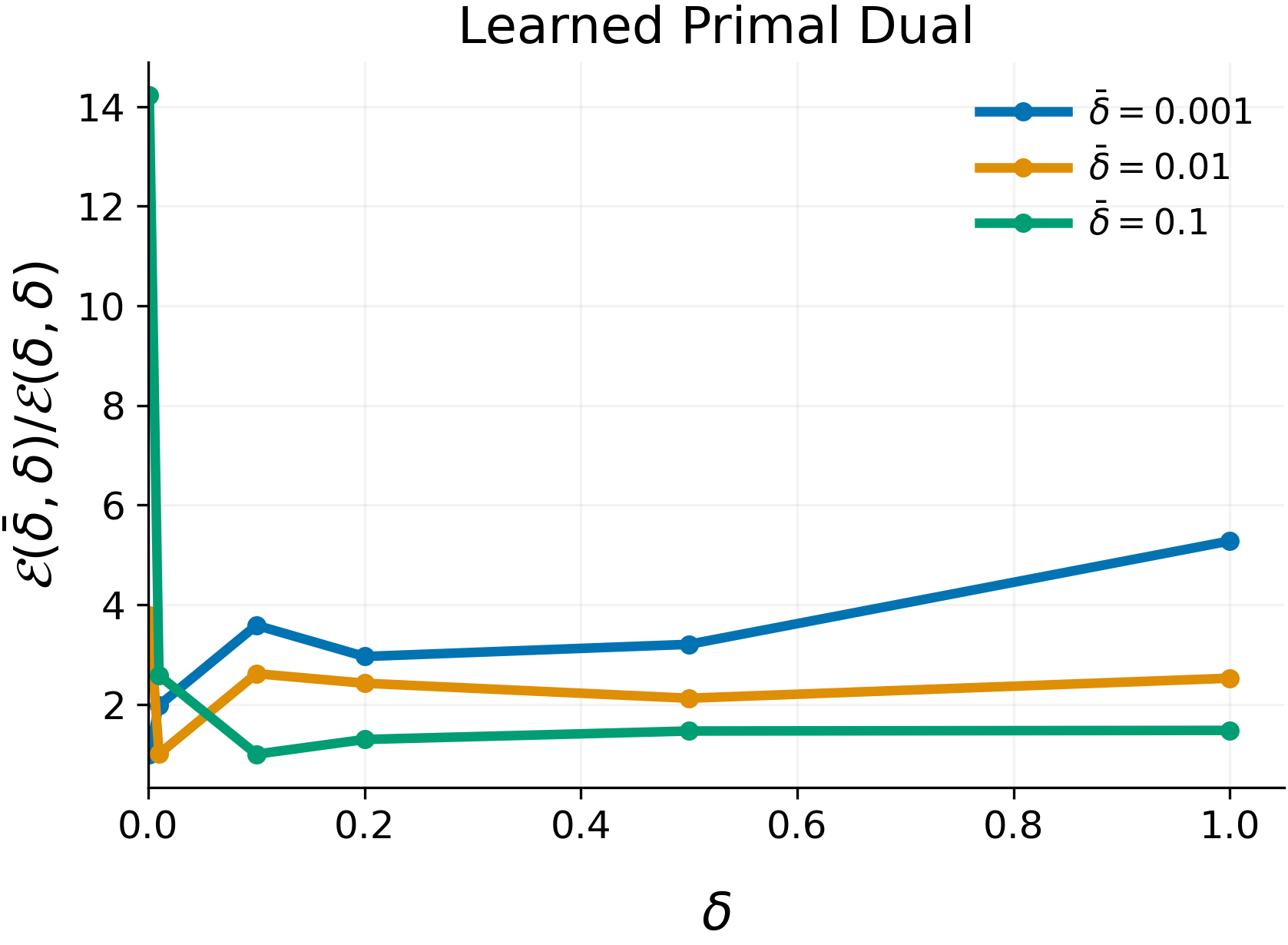}
  \end{subfigure}\hfill
  \begin{subfigure}[t]{0.28\textwidth}\centering
    \includegraphics[width=\linewidth]{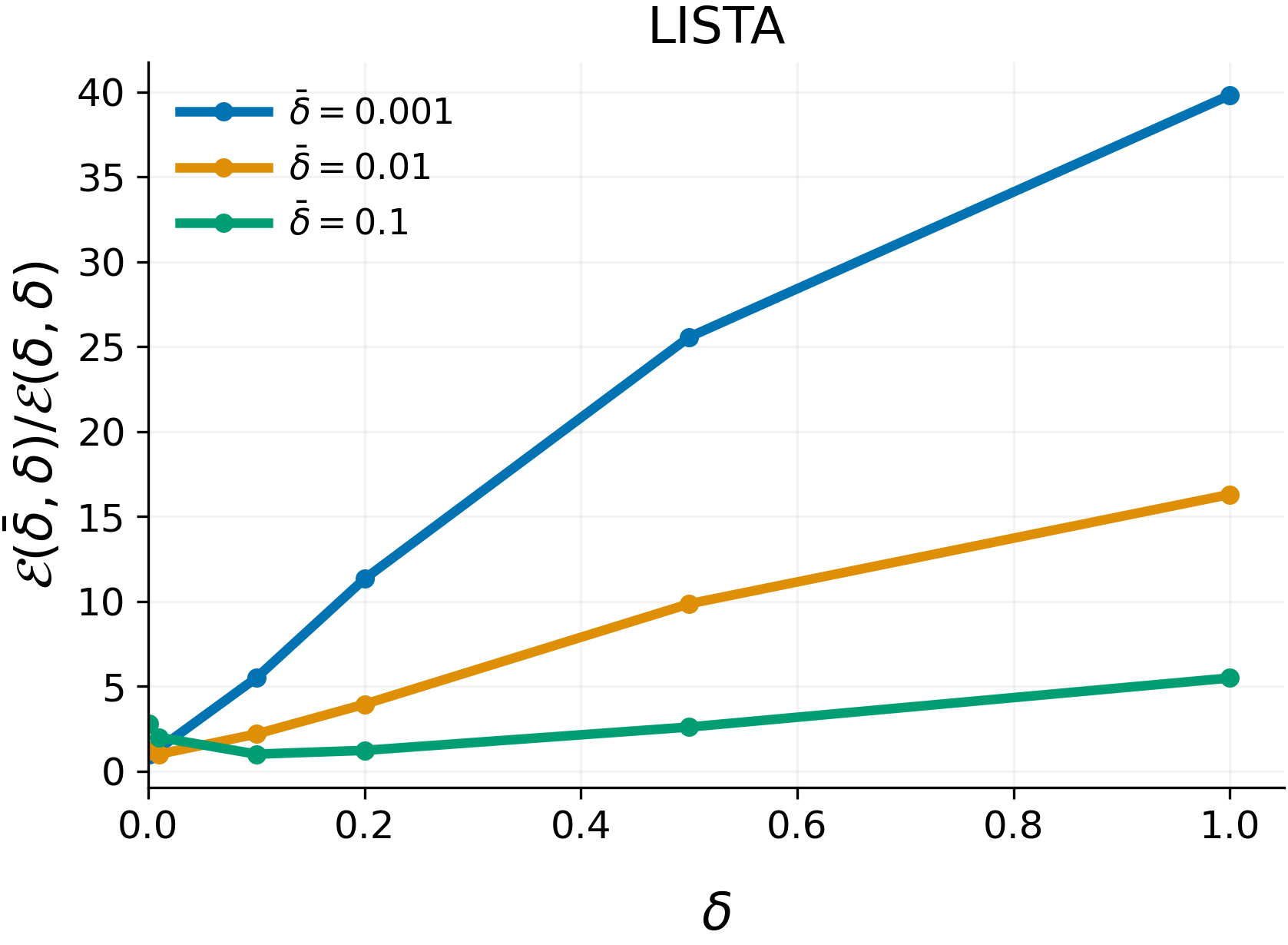}
  \end{subfigure}

  \vspace{-.10cm}
  \noindent\rule{\linewidth}{1pt}
  \vspace{-.10cm}

  \centering
  \captionsetup[subfigure]{justification=centering}

  \begin{subfigure}[t]{0.28\textwidth}\centering
    \includegraphics[width=\linewidth]{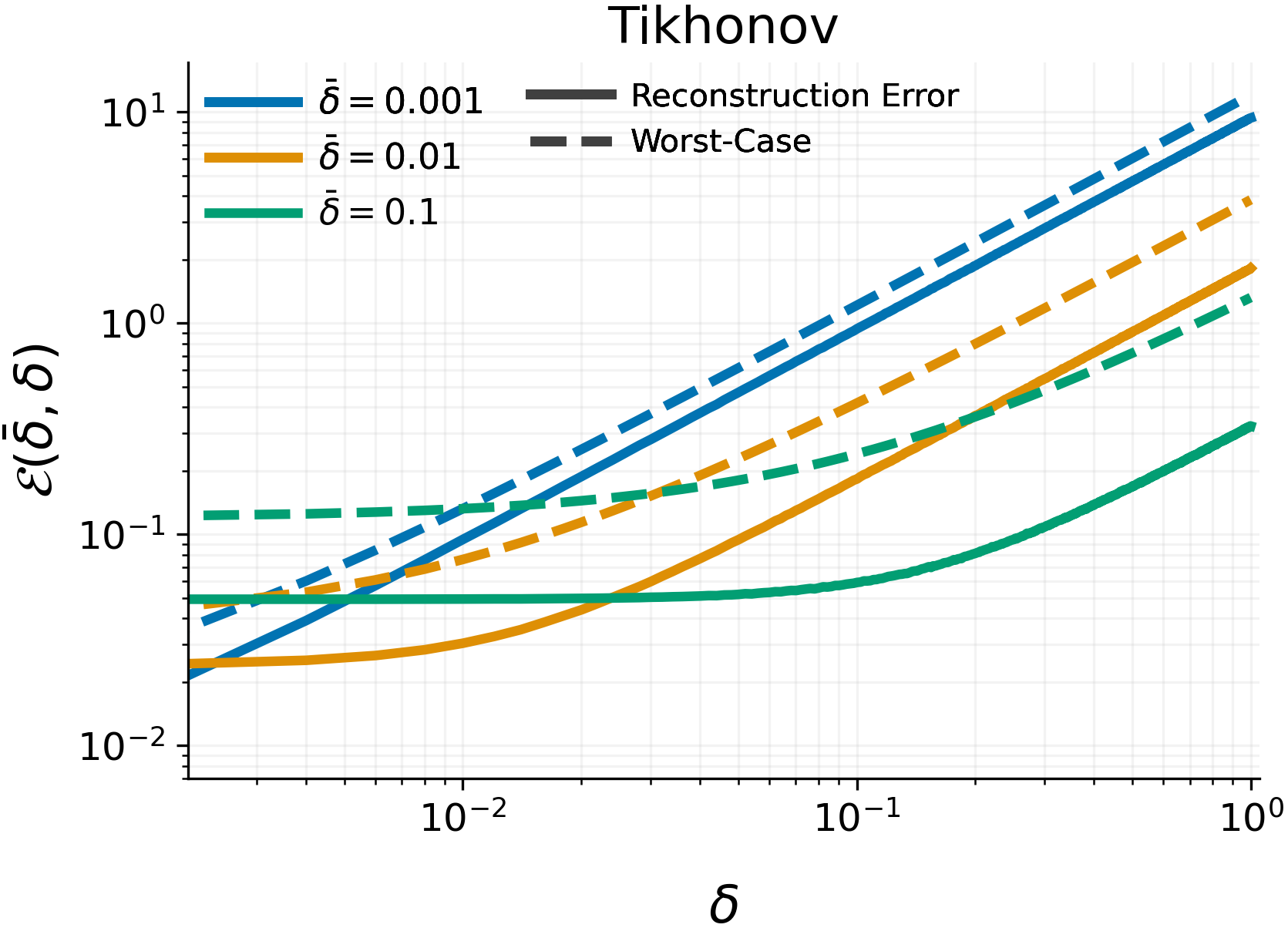}
  \end{subfigure}\hfill
  \begin{subfigure}[t]{0.28\textwidth}\centering
    \includegraphics[width=\linewidth]{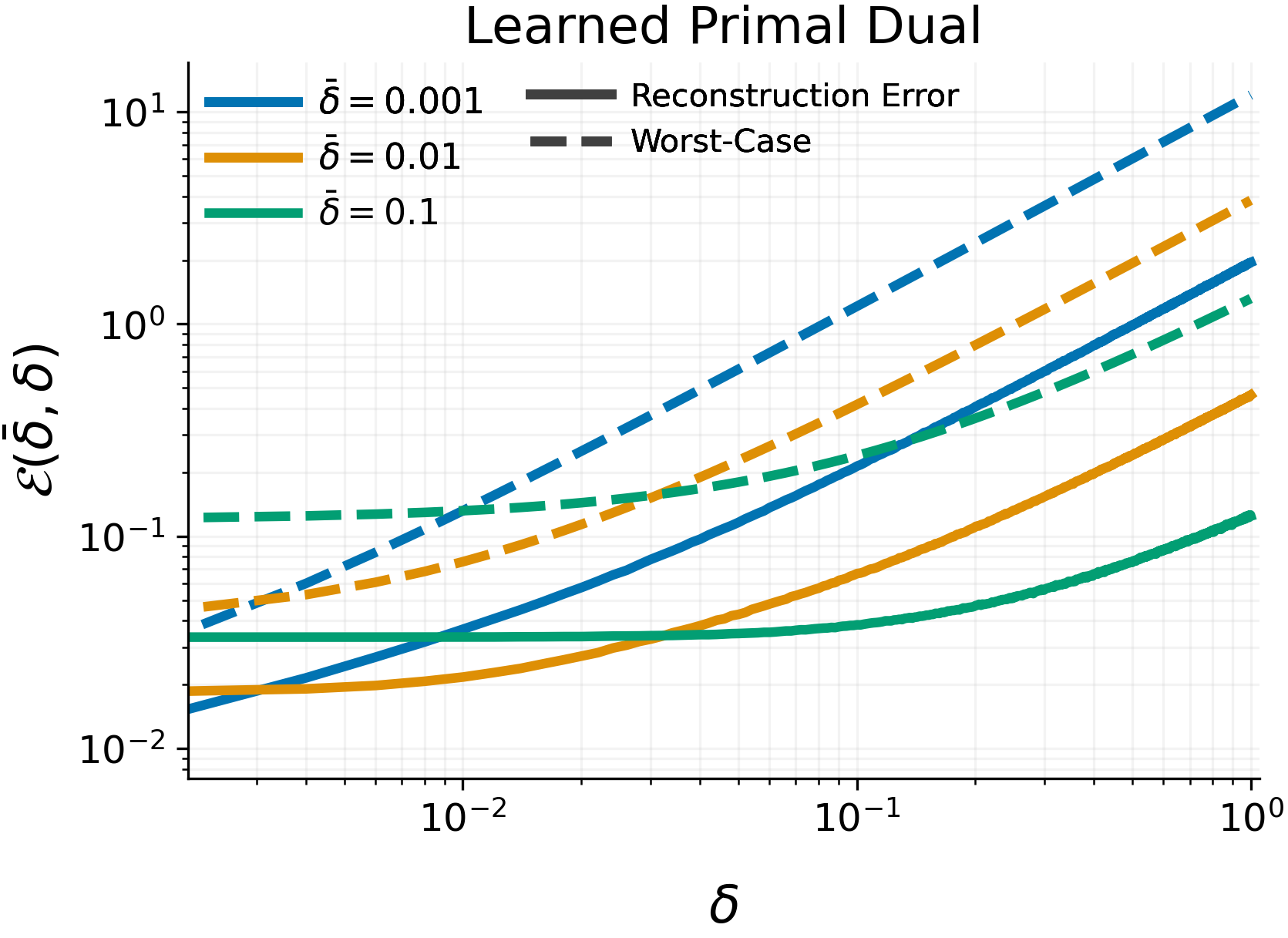}
  \end{subfigure}\hfill
  \begin{subfigure}[t]{0.28\textwidth}\centering
    \includegraphics[width=\linewidth]{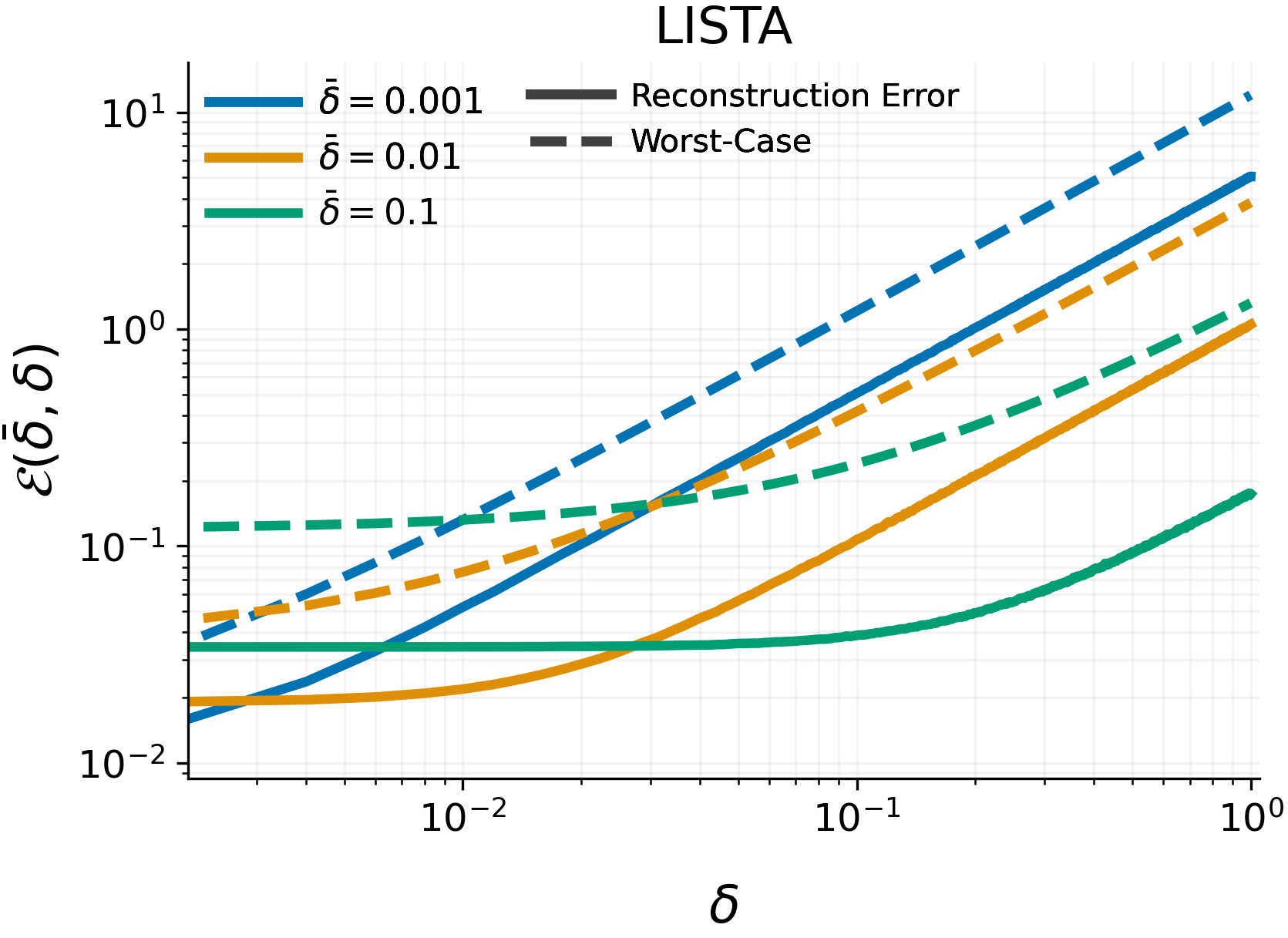}
  \end{subfigure}

  \begin{subfigure}[t]{0.28\textwidth}\centering
    \includegraphics[width=\linewidth]{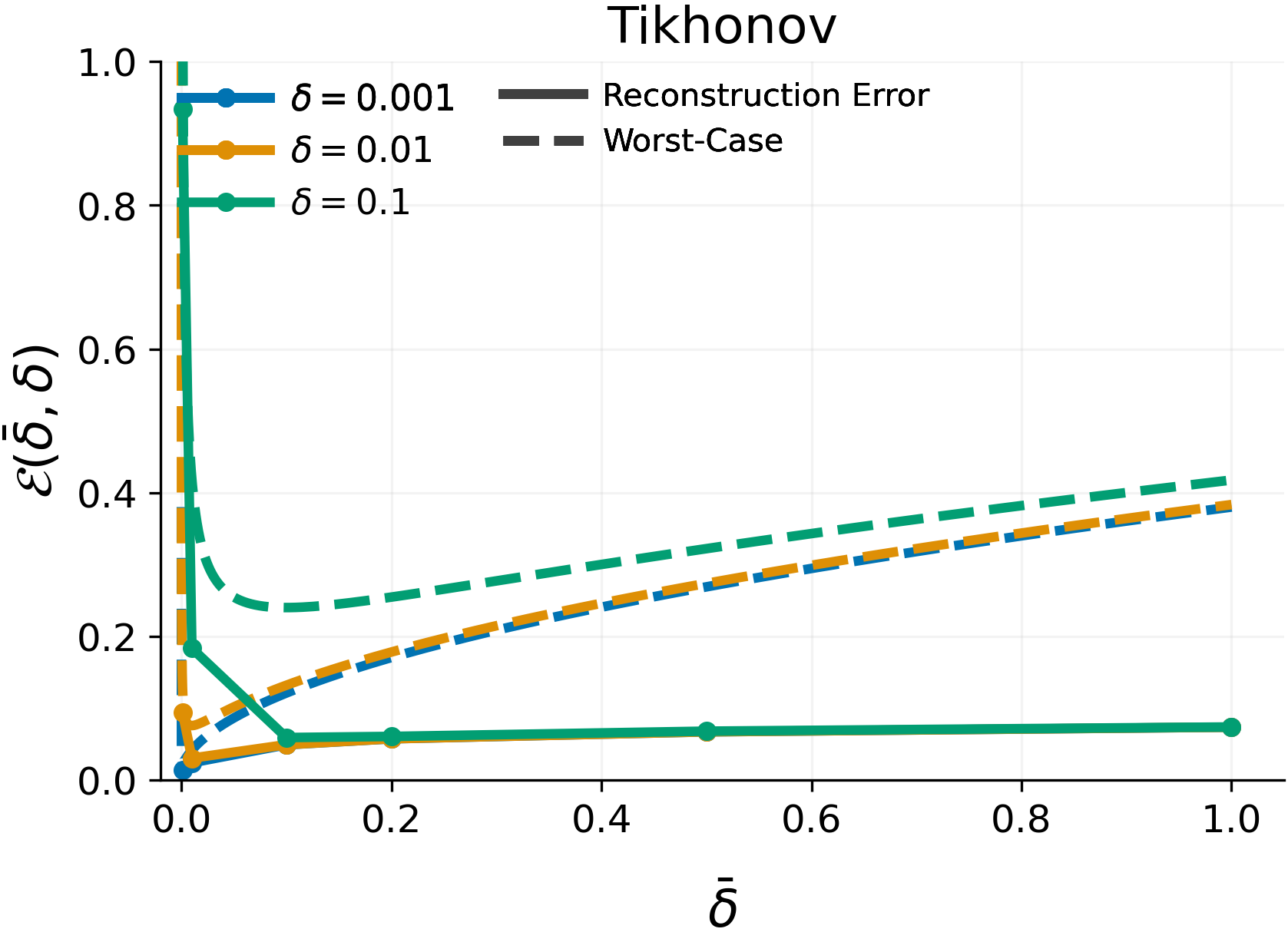}
  \end{subfigure}\hfill
  \begin{subfigure}[t]{0.28\textwidth}\centering
    \includegraphics[width=\linewidth]{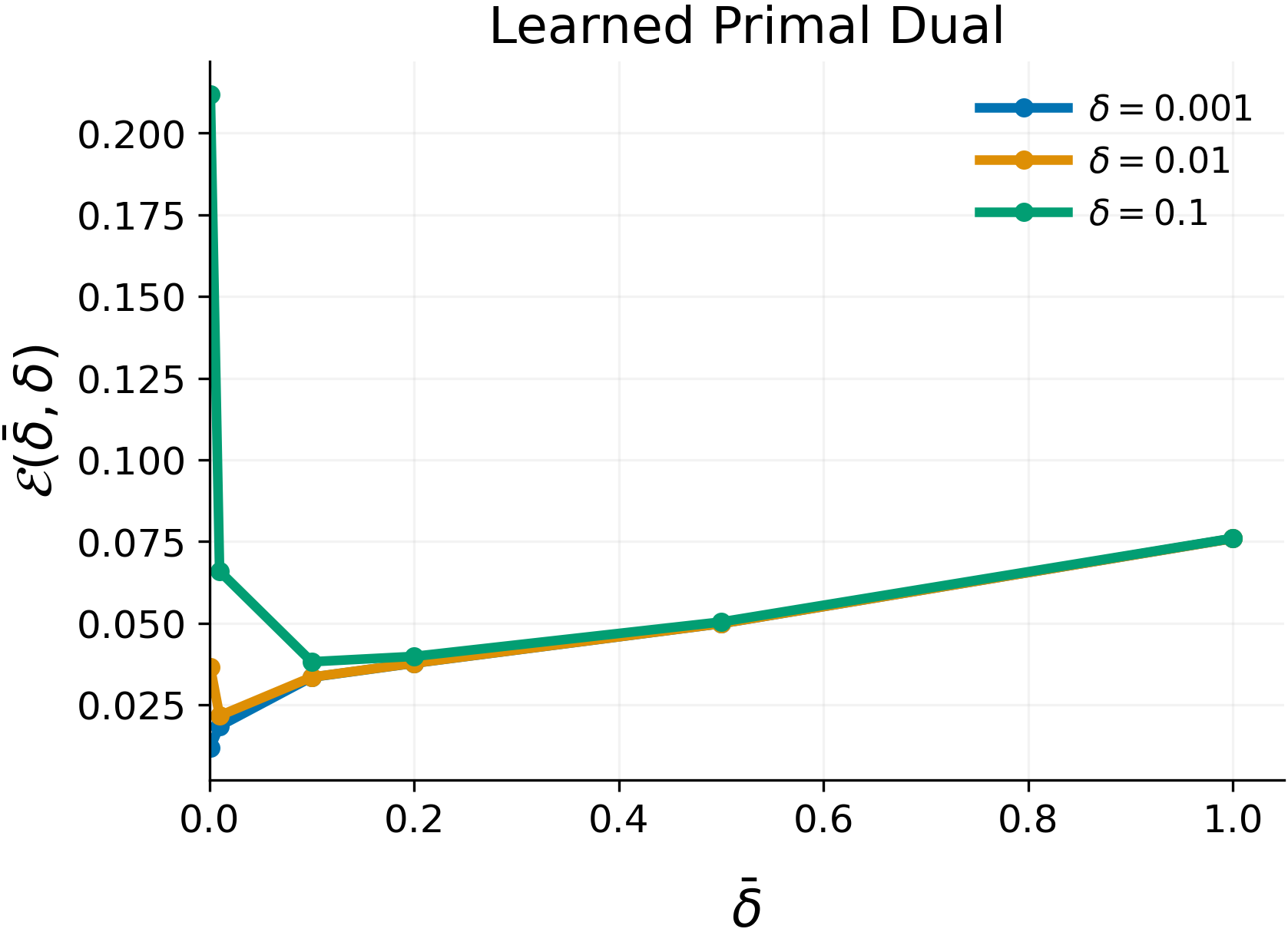}
  \end{subfigure}\hfill
  \begin{subfigure}[t]{0.28\textwidth}\centering
    \includegraphics[width=\linewidth]{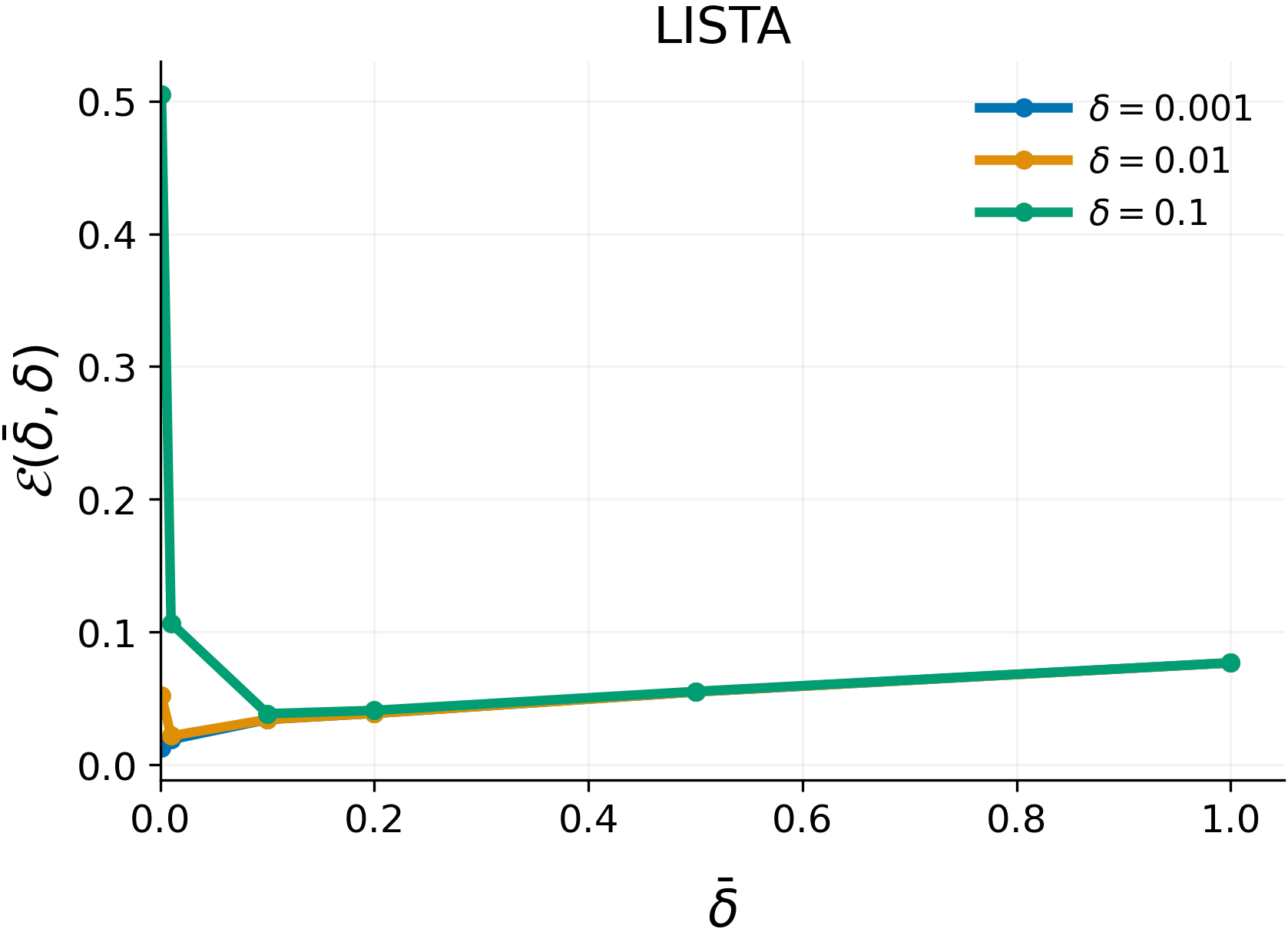}
  \end{subfigure}

  \begin{subfigure}[t]{0.28\textwidth}\centering
    \includegraphics[width=\linewidth]{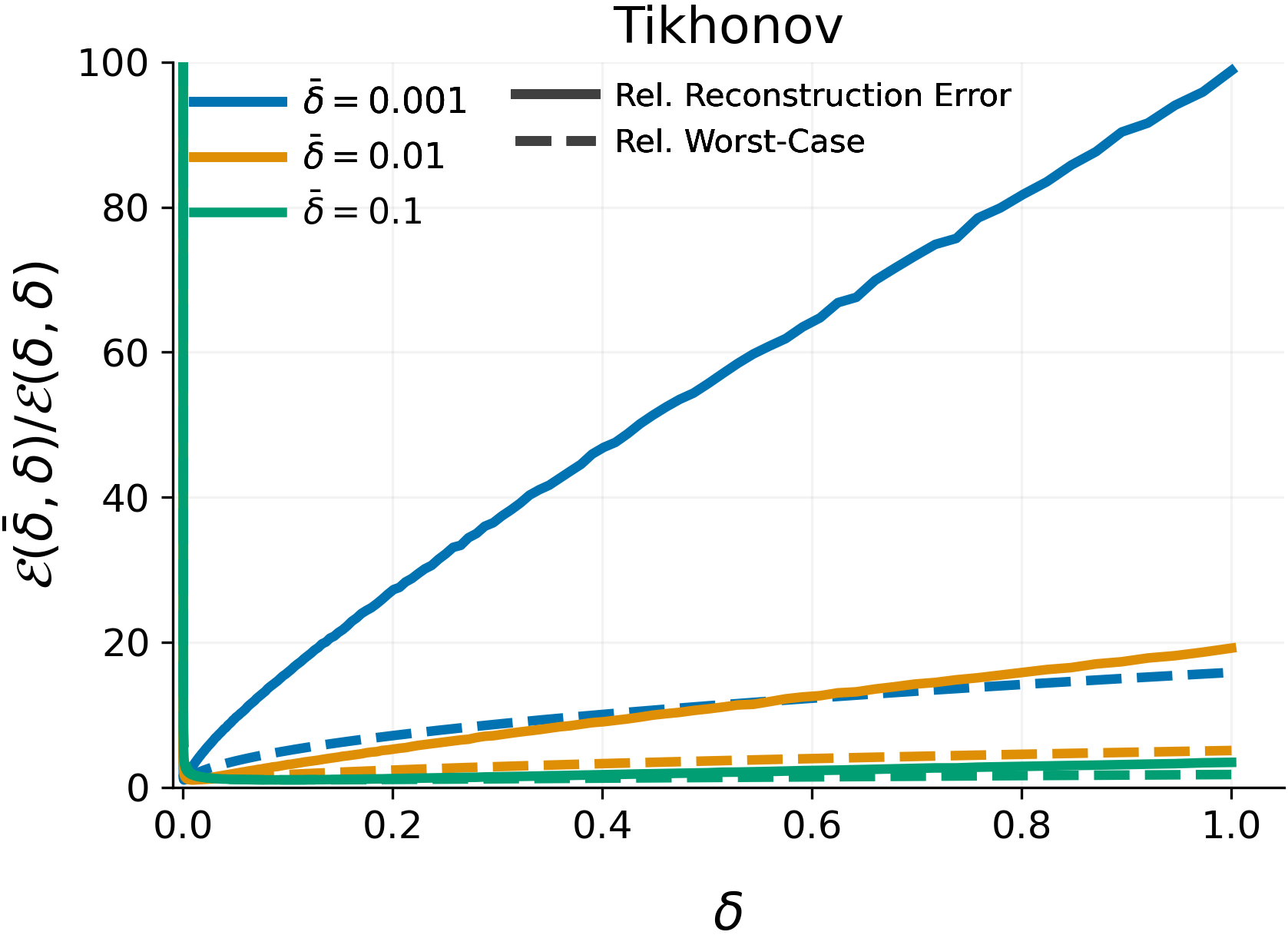}
  \end{subfigure}\hfill
  \begin{subfigure}[t]{0.28\textwidth}\centering
    \includegraphics[width=\linewidth]{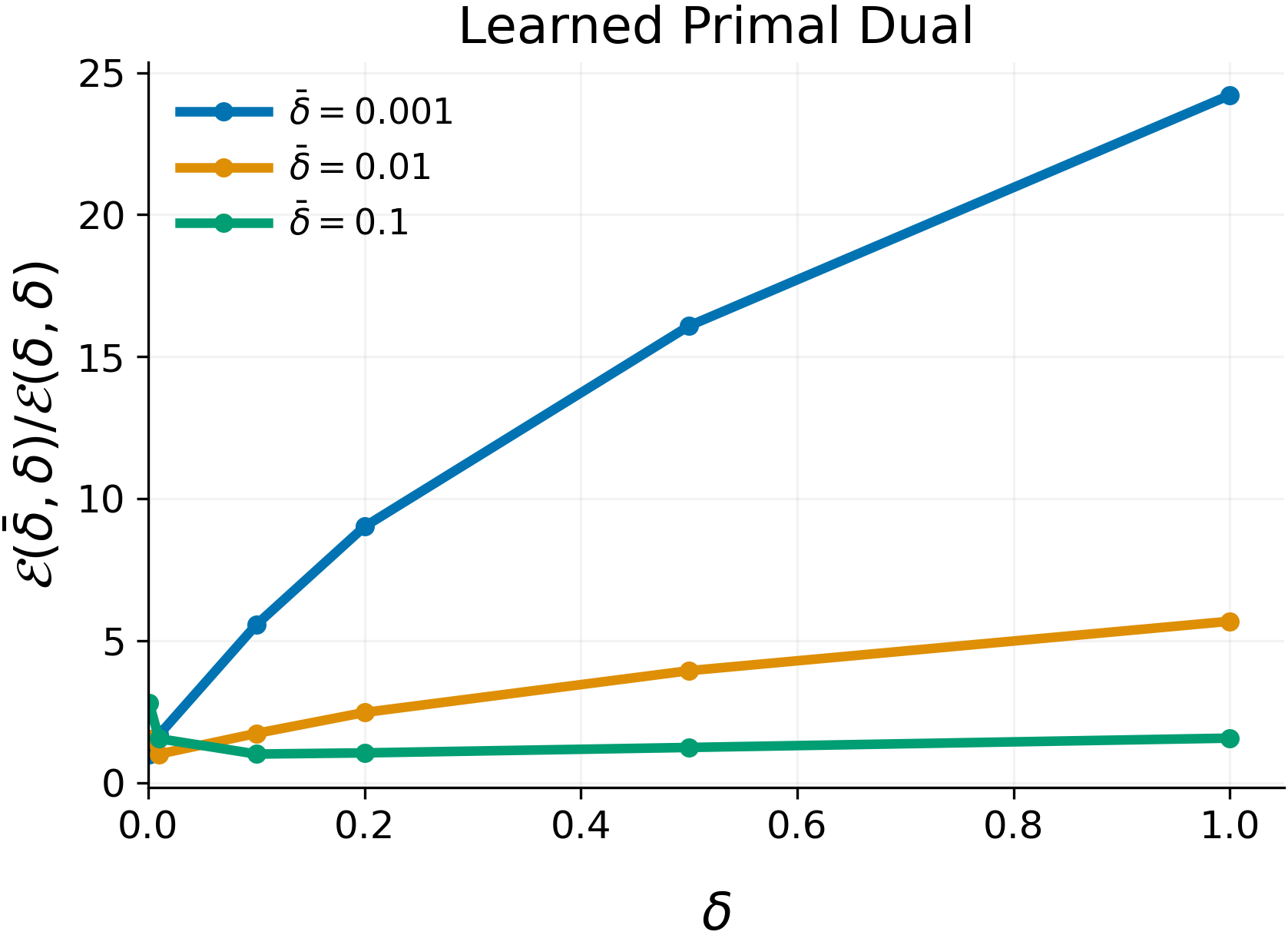}
  \end{subfigure}\hfill
  \begin{subfigure}[t]{0.28\textwidth}\centering
    \includegraphics[width=\linewidth]{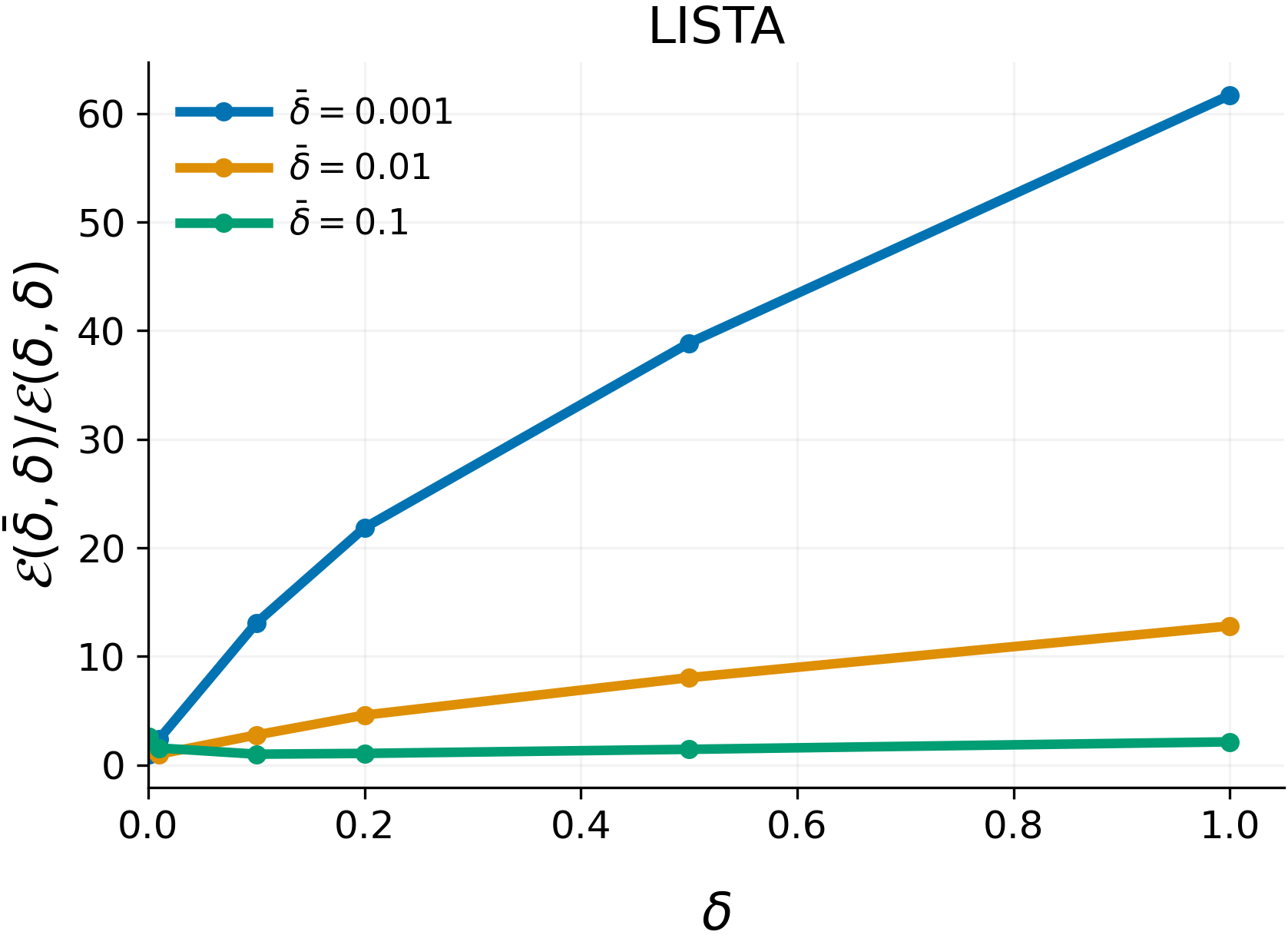}
  \end{subfigure}

  \caption{Top: Reconstruction errors for $A_R$ with Tikhonov regularization ($\rho = 15.74$) \emph{(left)}, LPD \emph{(middle)}, and LISTA \emph{(right)}.
  We plot the errors at fixed regularization level $\bar \delta$ over varying $\delta$~\emph{(upper row)} and over varying regularization levels at fixed $\delta$~\emph{(middle row)} as well as the corresponding relative errors~\emph{(bottom row)}.
  The dashed lines correspond to the analytical bounds as in Corollary~\ref{cor:wc_error}.
  Bottom: The same plots for $A_I$, where we have $\rho = 0.58$.}
  \label{fig:wc_grid_int}
\end{figure}

\section{Worst-case errors on a low-dimensional data manifold}
So far, we analyzed the worst-case error \eqref{eq:DefWC} in the general setting of Hilbert spaces without any assumptions on the underlying data.
In this setting, classical methods are known to yield the optimal worst-case rates and we do not expect major improvements by using learned methods.
Indeed, for our numerical experiments in Section \ref{sec:WorstCaseNumeric}, the learned methods seem to admit the same (optimal) convergence rates.
This might become different if we can exploit the structure of the data manifold.
Hence, we now consider the setting where the ground truth data $x^\dagger$ stem from a low-dimensional linear subspace $X_N \subset X$ of dimension $N$ (both of which are unknown). 
By $A_N = A |_{X_N}$ we denote the restriction of $A$ to $X_N$. 
\subsection{Analysis for Tikhonov regularization}
Again, we first analyze the worst-case error before presenting numerical experiments.
Throughout, we assume the following.
\begin{assumption}\label{ass:2}
\begin{enumerate}[label=(\roman*)]
\item We have $x^\dagger = A^\ast z \in X_N$ with $z\in \mathrm{range}(A_N)$ and $\| z \| \le \rho$.
\item We assume that $A_N$ is only moderately ill-posed in the sense that we have the bound \\ $\|( \alpha I + A^\ast A)^{-1} A^\ast\|_{\mathcal L(\mathrm{range}(A_N), X)}\le CN$ for all $0 < \alpha \leq 1$ and $C > 0$.

\item We consider Tikhonov regularization $T_{\alpha}$ of $A$ with regularization parameter $\alpha$ applied to data $y^\delta$ with $\| y^\delta - y^\dagger \| \le \delta$, where $ \alpha$ is chosen independently of noise level  $\delta$.
To avoid case distinctions, we restrict our analysis to $\alpha \le 1$.
\end{enumerate}
\end{assumption}

\begin{remark}\label{rem:op_bound}
Let us briefly motivate Assumption \ref{ass:2}.
If $A_N$ is injective, then its smallest singular values satisfies $\tilde\sigma_N > 0$, and we obtain the estimate
\begin{equation}
    \|(\alpha I + A^\ast A)^{-1}A^\ast\|_{\mathcal L(\operatorname{range}(A_N),X)}
    \le \tilde\sigma_N^{-1}.
\end{equation}
Hence, any lower bound on $\tilde\sigma_N$ yields a corresponding bound on the restricted Tikhonov operator.
If the singular values decay as $\tilde \sigma_N \sim N^{-1}$, then we obtain precisely (ii) with $C=1$.
In principle, we can use any bound on $\|A^\ast - A_N^\ast\|$ that leads to estimates for $\|( \alpha I + A^\ast A)^{-1}A^\ast\|_{\mathcal L(\mathrm{range}(A_N), X)}$.
\end{remark}

In  view of the subsequent error analysis, we expect that the unknown parameter $N$ serves as an additional intrinsic regularization parameter.
In particular, $\alpha$ should determine the reconstruction quality in a certain range, while $N$ does for the remaining cases.

\begin{theorem}\label{thm:n_dim2}
Under the requirements of Theorem \ref{thm:tikh_error} and Assumption \ref{ass:2}, we obtain for all $ 0 < \alpha\le 1$ that
\begin{equation}\label{eq:extra_bound}
    \|T_\alpha y^\delta - x^\dagger\|
     \le  \frac{\delta}{2\sqrt{\alpha}}
+ \begin{cases}
       \frac{\sqrt{\alpha}}{2}\rho &\text{if } 1 / (2CN) < \sqrt \alpha \leq 1, \\
    \alpha C N \rho &\text{if } \sqrt \alpha \le 1 / (2CN).
    \end{cases}   
\end{equation}
\end{theorem}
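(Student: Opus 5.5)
We follow the proof of Theorem~\ref{thm:tikh_error} and split the total error as in \eqref{eq:ErrorDecompose} into data error and approximation error,
\begin{equation*}
\|T_\alpha y^\delta - x^\dagger\| \le \|T_\alpha(y^\delta - y^\dagger)\| + \|T_\alpha(AA^\ast z) - A^\ast z\|.
\end{equation*}
The data error is handled exactly as in \eqref{eq:EstDataError}. Since $0<\alpha\le 1$, it gives $\delta/(2\sqrt{\alpha})$ and needs no information about $X_N$. All the work therefore lies in the approximation error, where we derive two bounds and take their minimum. The case distinction in \eqref{eq:extra_bound} then comes from comparing these two bounds.

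The first bound is the classical estimate \eqref{eq:EstApproxError}. It gives $\frac{\sqrt{\alpha}}{2}\rho$ and only uses Assumption~\ref{ass:1}, which Assumption~\ref{ass:2}(i) implies.

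The second bound should exploit Assumption~\ref{ass:2}(ii). For this we use the resolvent identity
\begin{equation*}
T_\alpha A A^\ast z - A^\ast z = (\alpha I + A^\ast A)^{-1}\bigl(A^\ast A - (\alpha I + A^\ast A)\bigr)A^\ast z = -\alpha(\alpha I + A^\ast A)^{-1}A^\ast z.
\end{equation*}
Since $z\in\mathrm{range}(A_N)$, the operator $(\alpha I + A^\ast A)^{-1}A^\ast$ acts on $z$ only through its restriction to $\mathrm{range}(A_N)$. Assumption~\ref{ass:2}(ii) then yields $\|T_\alpha y^\dagger - x^\dagger\| \le \alpha C N \|z\| \le \alpha C N\rho$. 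Hence the approximation error is at most $\min\{\frac{\sqrt\alpha}{2}\rho,\ \alpha CN\rho\}$.

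Finally, $\alpha CN\rho \le \frac{\sqrt\alpha}{2}\rho$ holds if and only if $\sqrt\alpha \le 1/(2CN)$. Choosing the smaller bound in each regime gives exactly the two cases in \eqref{eq:extra_bound}, and adding the data-error bound completes the argument.

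The main obstacle is the second bound. We need the identity $T_\alpha AA^\ast - I = -\alpha(\alpha I+A^\ast A)^{-1}$ on $\mathrm{range}(A^\ast)$, and we must make sure that applying it to $A^\ast z$ legitimately uses the restricted operator norm from (ii). This holds because $z$ itself lies in $\mathrm{range}(A_N)$, which is why Assumption~\ref{ass:2}(i) requires $z\in\mathrm{range}(A_N)$ and not merely $x^\dagger\in X_N$. If that step causes trouble, a fallback is to write it via the SVD: the filter factor $1-F_\alpha(\sigma)=\alpha/(\sigma^2+\alpha)$ times $\sigma$ equals $\alpha F_\alpha(\sigma)\sigma^{-1}$. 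So the approximation error is $\alpha$ times the Tikhonov operator applied to $z$, and the bound from (ii) follows directly.
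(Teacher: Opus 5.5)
Your proposal is correct and follows the paper's proof essentially step by step. You take the data error from \eqref{eq:EstDataError}, bound the approximation error both by the classical $\frac{\sqrt\alpha}{2}\rho$ and, via $T_\alpha y^\dagger - x^\dagger = -\alpha(\alpha I + A^\ast A)^{-1}A^\ast z$ with Assumption~\ref{ass:2}(ii), by $\alpha CN\rho$, and obtain the case split from $\alpha CN\rho \le \frac{\sqrt\alpha}{2}\rho \iff \sqrt\alpha \le 1/(2CN)$. (A minor notational point: the identity you mean is $T_\alpha A - I = -\alpha(\alpha I + A^\ast A)^{-1}$, which holds on all of $X$, not only on $\mathrm{range}(A^\ast)$.)
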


\begin{proof}
From the proof of Theorem \ref{thm:tikh_error}, the data error can be bounded by $\|T_\alpha (y^\delta-y^\dagger)\| \leq \delta/(2\sqrt{\alpha})$, see \eqref{eq:EstDataError}, and the approximation error can be bounded by $\| T_\alpha y^\dagger - x^\dagger\| \leq \sqrt{\alpha}\rho/2$, see \eqref{eq:EstApproxError}.
For the approximation error, we obtain a sharper bound for small values of $\alpha$.
More precisely, we have
\begin{equation}
    T_\alpha y^\dagger = (A^\ast A + \alpha I)^{-1}A^\ast A x^\dagger
\quad \text{and} \quad T_\alpha y^\dagger - x^\dagger = - \alpha (\alpha I + A^\ast A)^{-1}A^\ast z.
\end{equation}
Due to the assumption  $\|( \alpha I + A^\ast A)^{-1}A^\ast \|_{\mathcal L(\mathrm{range}(A_N), X)}\le CN$ we obtain that
\begin{equation}
    \| T_\alpha y^\dagger - x^\dagger \| \le \alpha CN \rho,
\end{equation}
so that the approximation error satisfies
\begin{equation}
\|T_\alpha y^\dagger - x^\dagger\|
 \le 
 \begin{cases}
       \frac{1}{2}\sqrt{\alpha}\rho &\text{if } 1 / (2CN) < \sqrt \alpha \leq 1, \\
    \alpha   C N \rho &\text{if } \sqrt \alpha \le 1 / (2CN).
    \end{cases}
\end{equation}
In view of the error decomposition \eqref{eq:ErrorDecompose}, this concludes the proof.
\end{proof}
In summary, the classical worst-case bounds for Tikhonov regularization with small $\alpha$ from Theorem~\ref{thm:tikh_error} are overly pessimistic.
In fact, if the admissible solutions are confined to a low-dimensional subspace $X_N$ with a specific rate for the condition number of the restricted operator $A_N$, the effective ill-posedness is much weaker.
Then, according to Corollary~\ref{cor:wc_error}, the sensitivity of the worst-case error to a misspecified regularization parameter $\alpha$ is also reduced.

\begin{remark}
Theorem \ref{thm:n_dim2} shows that the worst-case error as a function of $\alpha$ changes its analytic  at  $\sqrt \alpha \sim 1/(2CN)$. The function is continuous at this point, while its derivative is not. 
In principle, this allows to determine the unknown intrinsic dimension $N$ by computing several Tikhonov reconstructions with varying $\alpha$.
Evaluating \eqref{eq:extra_bound} requires the knowledge of $x^\dagger$, which however can by substituted by a high quality reference reconstruction.
\end{remark}

\subsection{Numerical results for unconstrained reconstruction}

We present numerical experiments in analogy to Section~\ref{sec:WorstCaseNumeric} for the integral operator $A_I \in \R^{n\times n}$ with $n=50$.
For this, we replace the data used in Section~\ref{sec:WorstCaseNumeric} by data that lives in a finite-dimensional linear subspace $X_N \subseteq X = \R^n$ and fulfills Assumption~\ref{ass:2}.
We first sample $N$ left singular vectors $(u_{k_j})_{j=1}^N$ of $A$ uniformly and then generate data by sampling 
\begin{equation}
    z_i = \sum_{j=1}^N d_j u_{k_j} \quad \text{ with } d_j \sim \mathcal{U}[-1,1].
\end{equation}
This is the natural restriction of~\eqref{eq:zdata_gen} to a subspace.
Note that 
\begin{equation}
    x_i \in X_N \coloneqq \{ v_{k_1}, \dots, v_{k_N}\} \quad \text{for all } i = 1,\hdots,L_\mathrm{train}.
\end{equation}
By construction, it holds $z_i \in \mathrm{range}(A_N)$.
For the experiments, we choose $N=8$.
Since the singular vectors of $A$ scale approximately as $n^{-1}$ and we consider the SVD basis, Assumption~\ref{ass:2}(ii) is also satisfied according to Remark~\ref{rem:op_bound}.

We again compare Tikhonov reconstruction, for which the worst-case error is given in Theorem~\ref{thm:n_dim2}, against the learned schemes LPD and LISTA in Figure~\ref{fig:wc_grid_int_nd}.
In addition to the characteristics described in Section~\ref{sec:WorstCaseNumeric} for general data, the learned methods now yield a significantly better reconstruction error compared to Tikhonov regularization.
This can be explained by the fact that the signals lie in the 8-dimensional subspace $X_{8} \subset \R^{50}$.
Hence, LISTA and LPD can learn reconstruction maps that are effectively adapted to $A_I^\dagger$ on this signal class while suppressing irrelevant ambient directions. 
Classical Tikhonov regularization, by contrast, remains a global linear filter that cannot exploit the low-dimensional structure of the data.
This effect is much weaker in the MNIST–Radon setting, where the data is only approximately low-dimensional and the inverse problem is considerably more ill-posed.
For larger $\bar\delta$, the measurements are more degraded, so that the learned reconstructions appear to benefit less from the underlying structure. Note that the second case of the error bound in~\eqref{eq:extra_bound} is invisible in the plots.
For $N=8$, this regime is restricted to  $\alpha \lesssim \nicefrac{1}{(16C)^2} \approx \nicefrac{1}{256}$ for $C \approx 1$.
Thus, it only affects a narrow parameter range.
Moreover, the corresponding contribution only induces slight vertical shift of the error curves when plotted against $\delta$, which becomes negligible compared to the data error.

\begin{figure}[tbh]
  \centering
  \captionsetup[subfigure]{justification=centering}

  \begin{subfigure}[t]{0.32\textwidth}\centering
    \includegraphics[width=\linewidth]{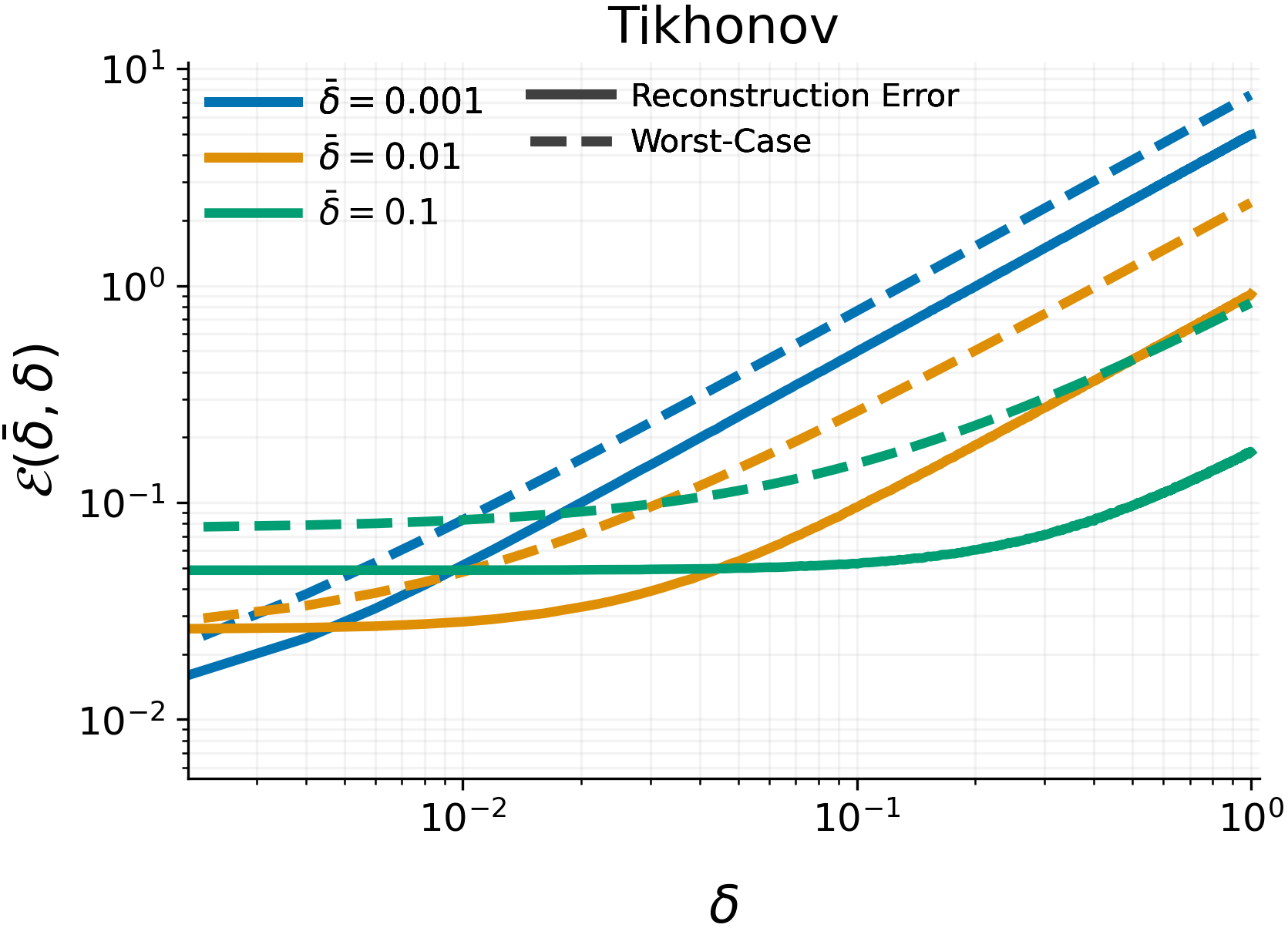}
  \end{subfigure}\hfill
  \begin{subfigure}[t]{0.32\textwidth}\centering
    \includegraphics[width=\linewidth]{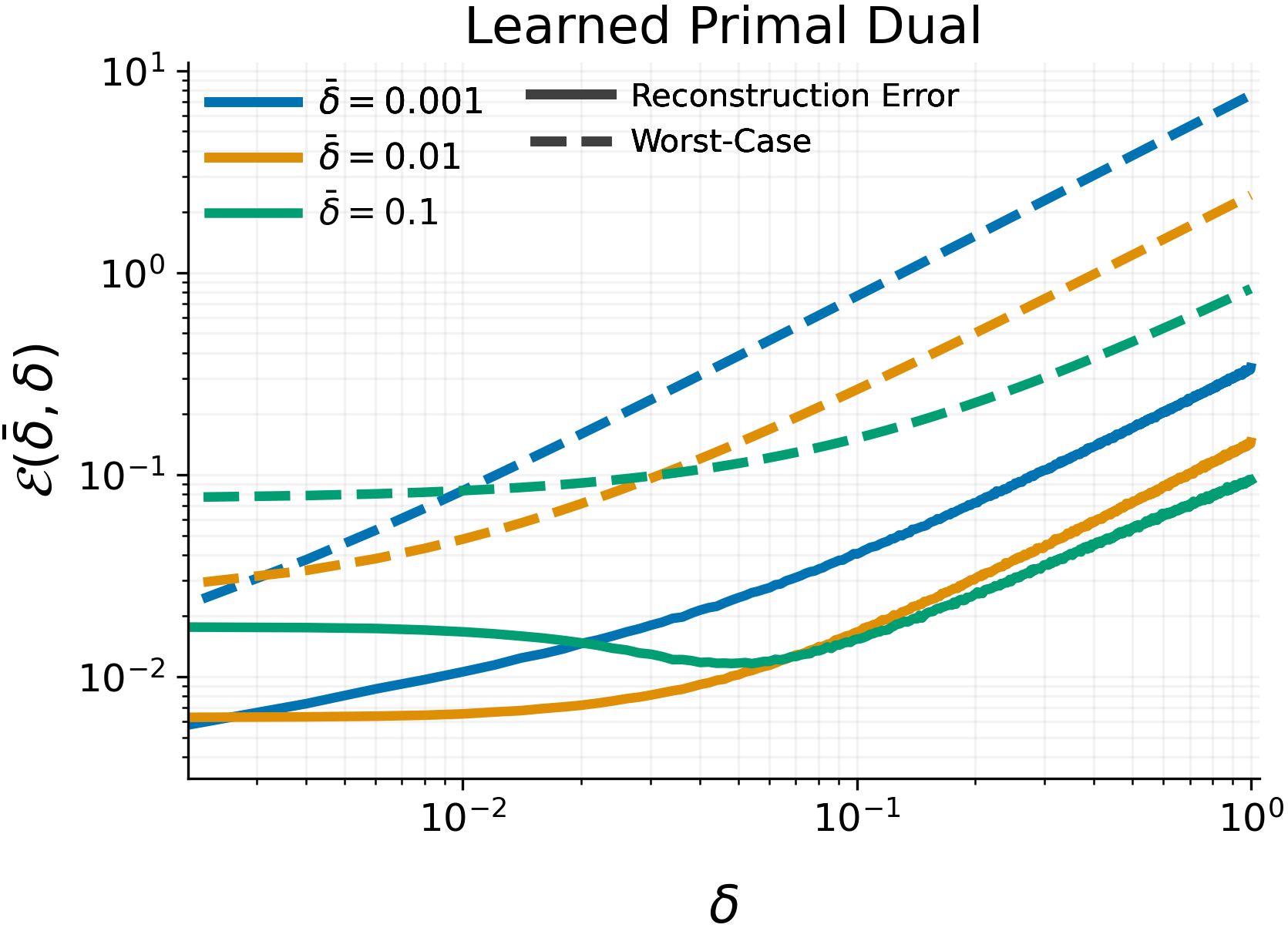}
  \end{subfigure}\hfill
  \begin{subfigure}[t]{0.32\textwidth}\centering
    \includegraphics[width=\linewidth]{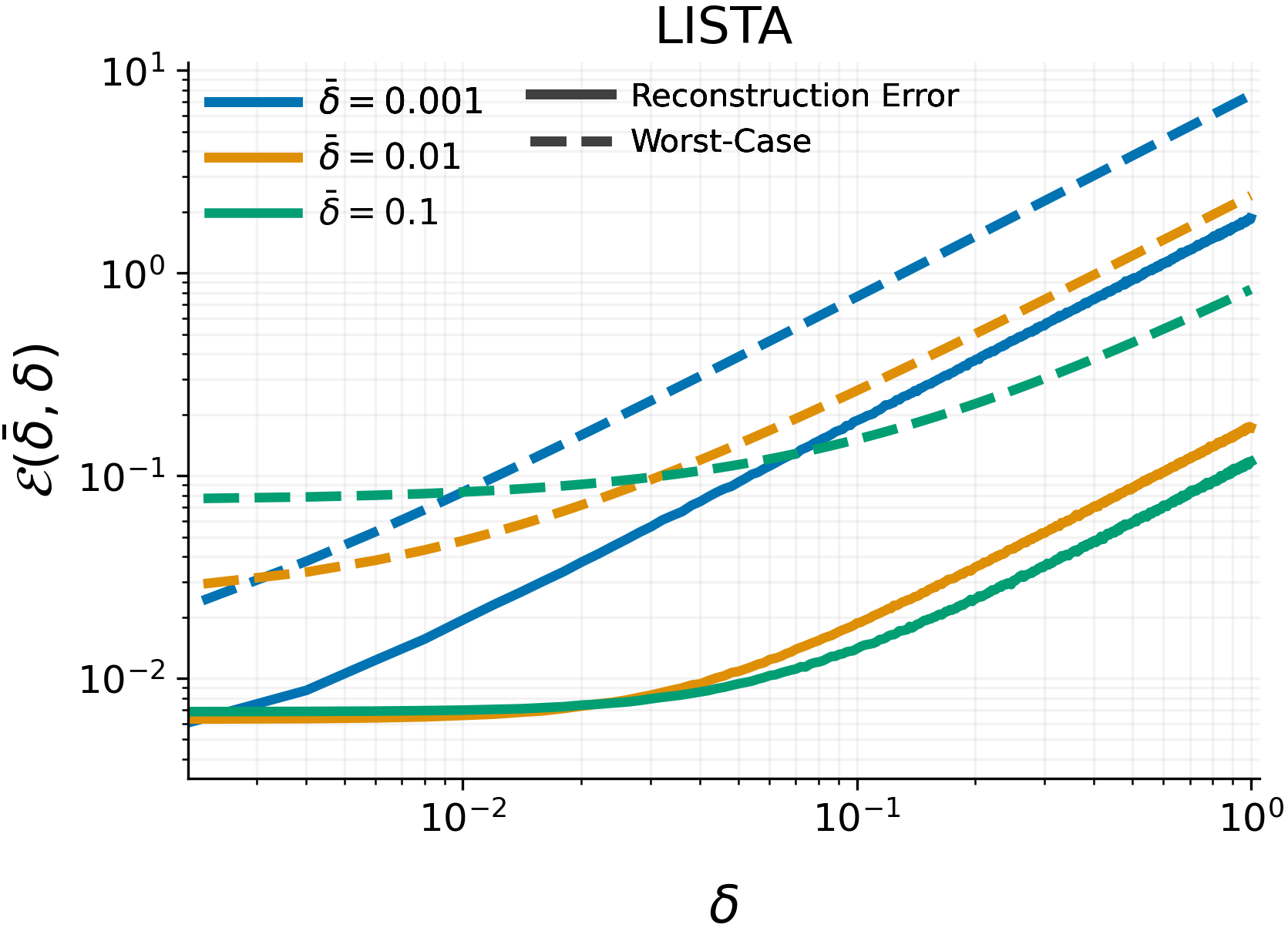}
  \end{subfigure}

  \begin{subfigure}[t]{0.32\textwidth}\centering
    \includegraphics[width=\linewidth]{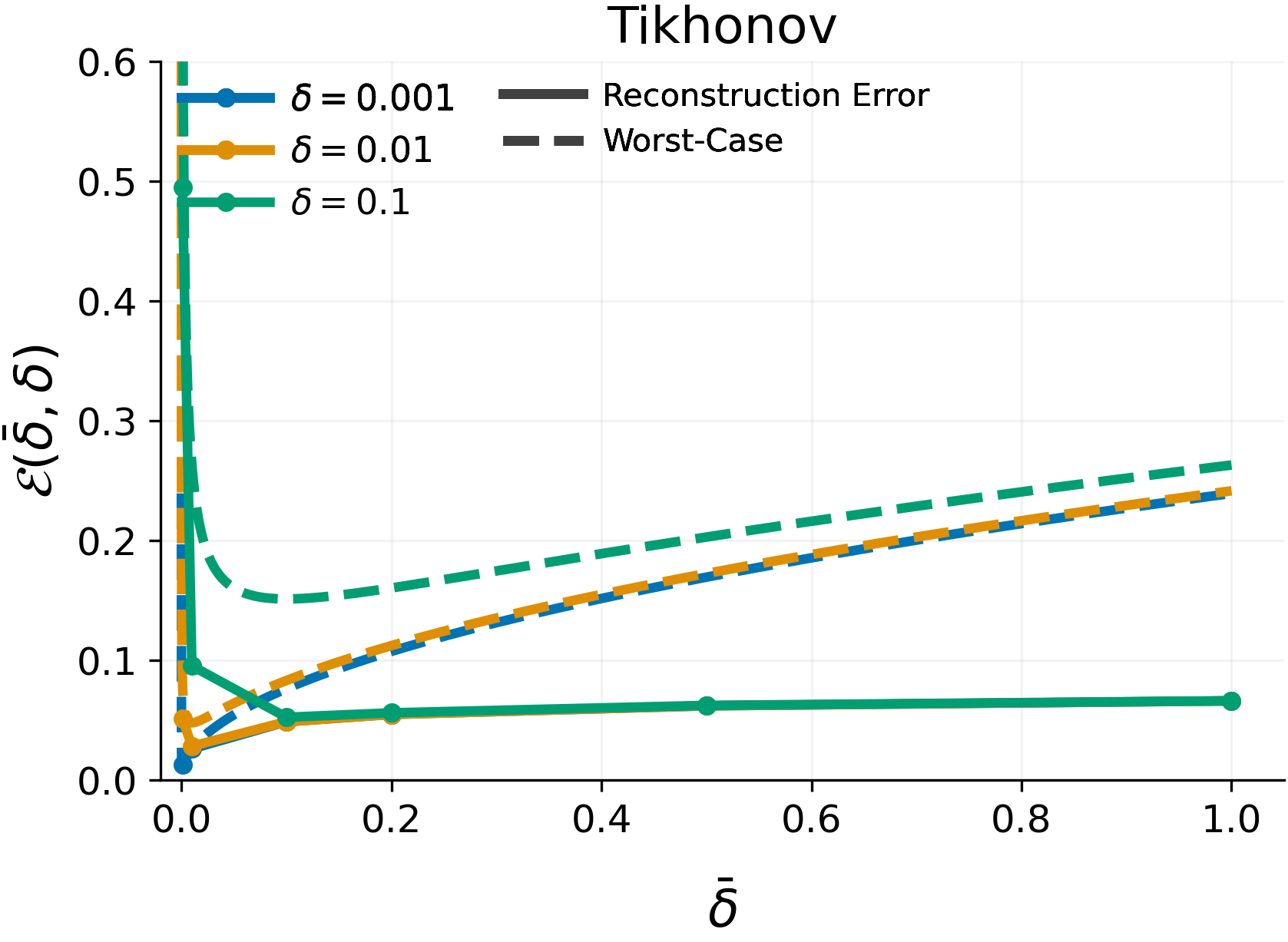}
  \end{subfigure}\hfill
  \begin{subfigure}[t]{0.32\textwidth}\centering
    \includegraphics[width=\linewidth]{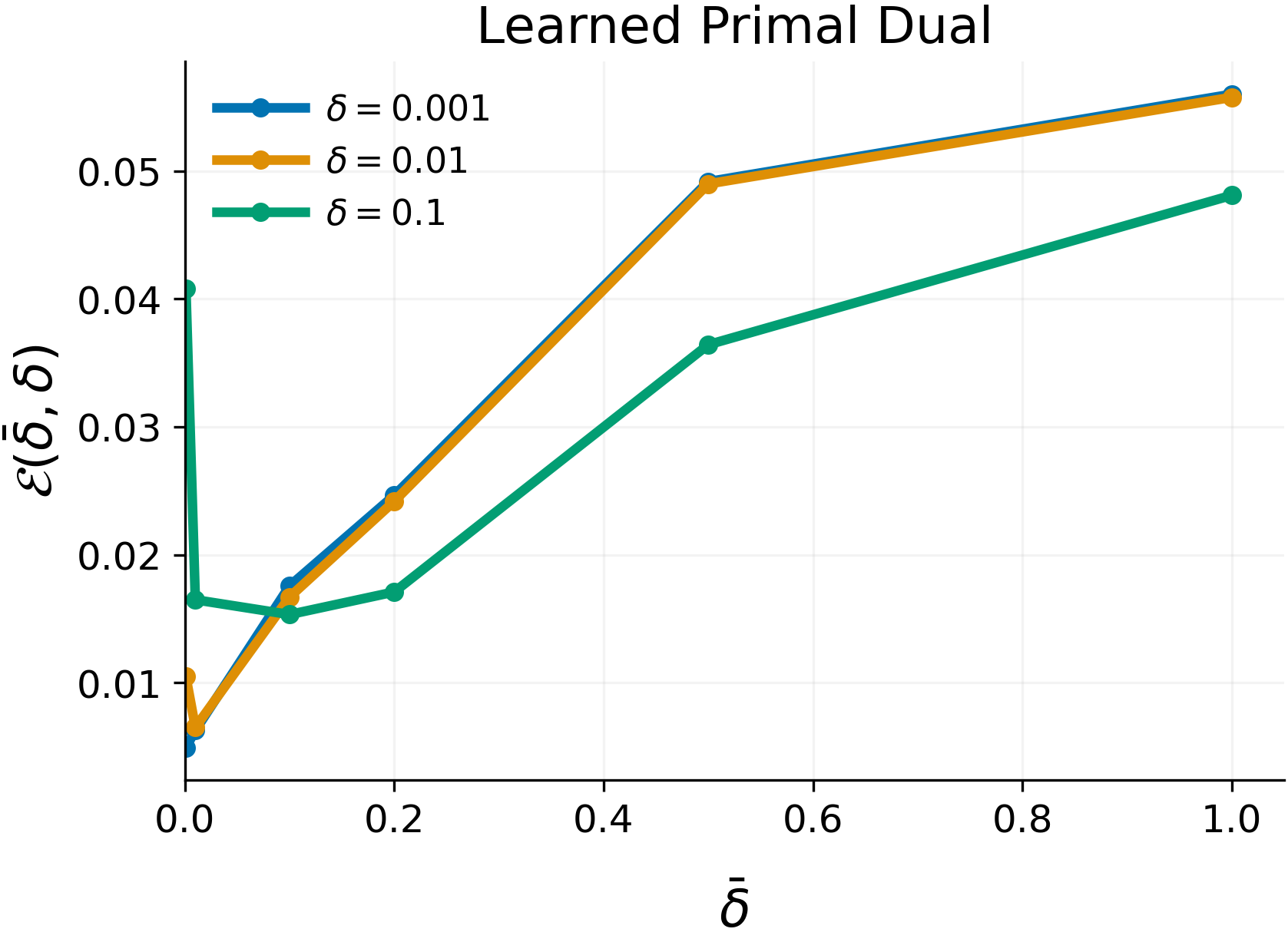}
  \end{subfigure}\hfill
  \begin{subfigure}[t]{0.32\textwidth}\centering
    \includegraphics[width=\linewidth]{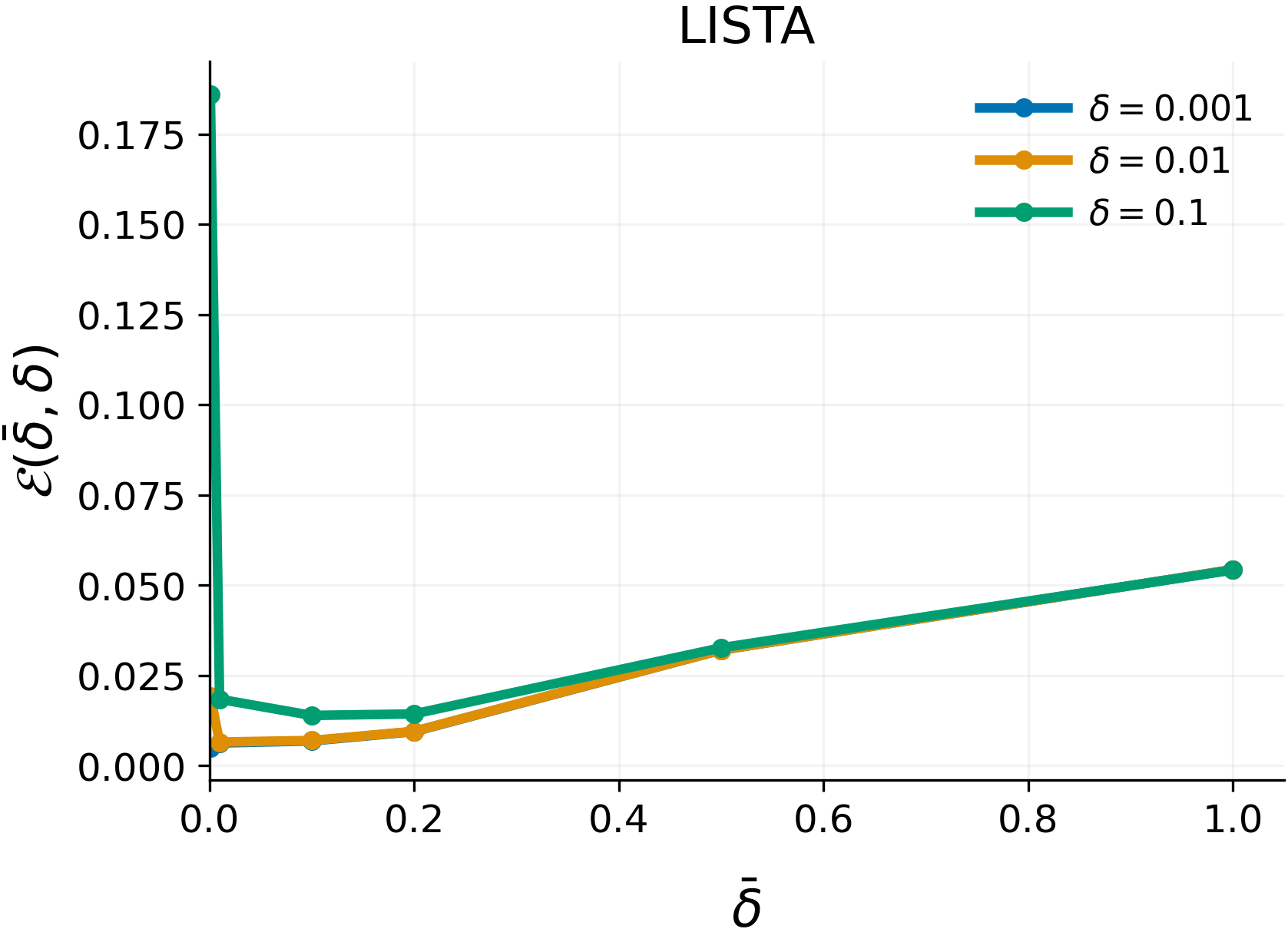}
  \end{subfigure}

  \begin{subfigure}[t]{0.32\textwidth}\centering
    \includegraphics[width=\linewidth]{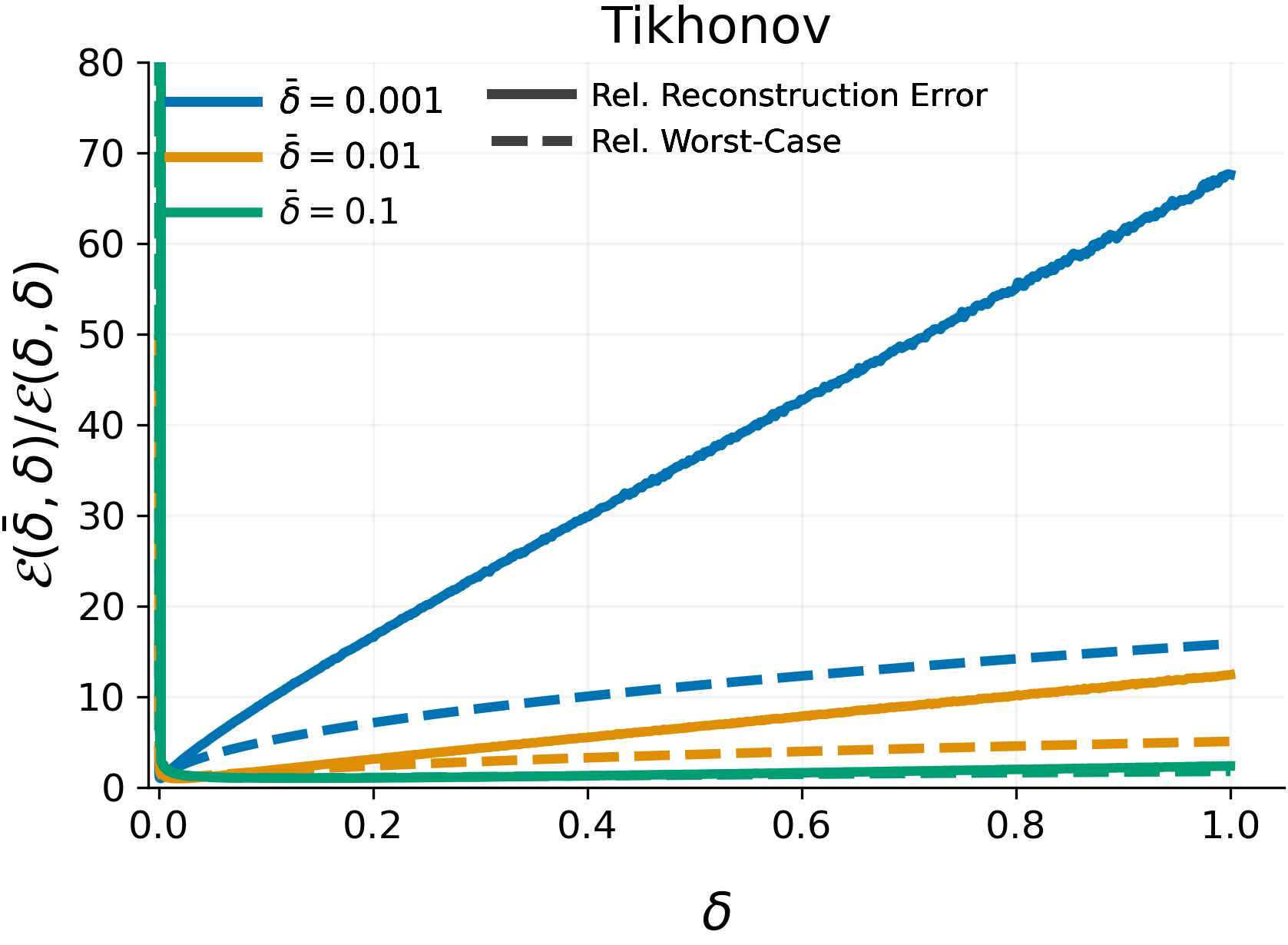}
  \end{subfigure}\hfill
  \begin{subfigure}[t]{0.32\textwidth}\centering
    \includegraphics[width=\linewidth]{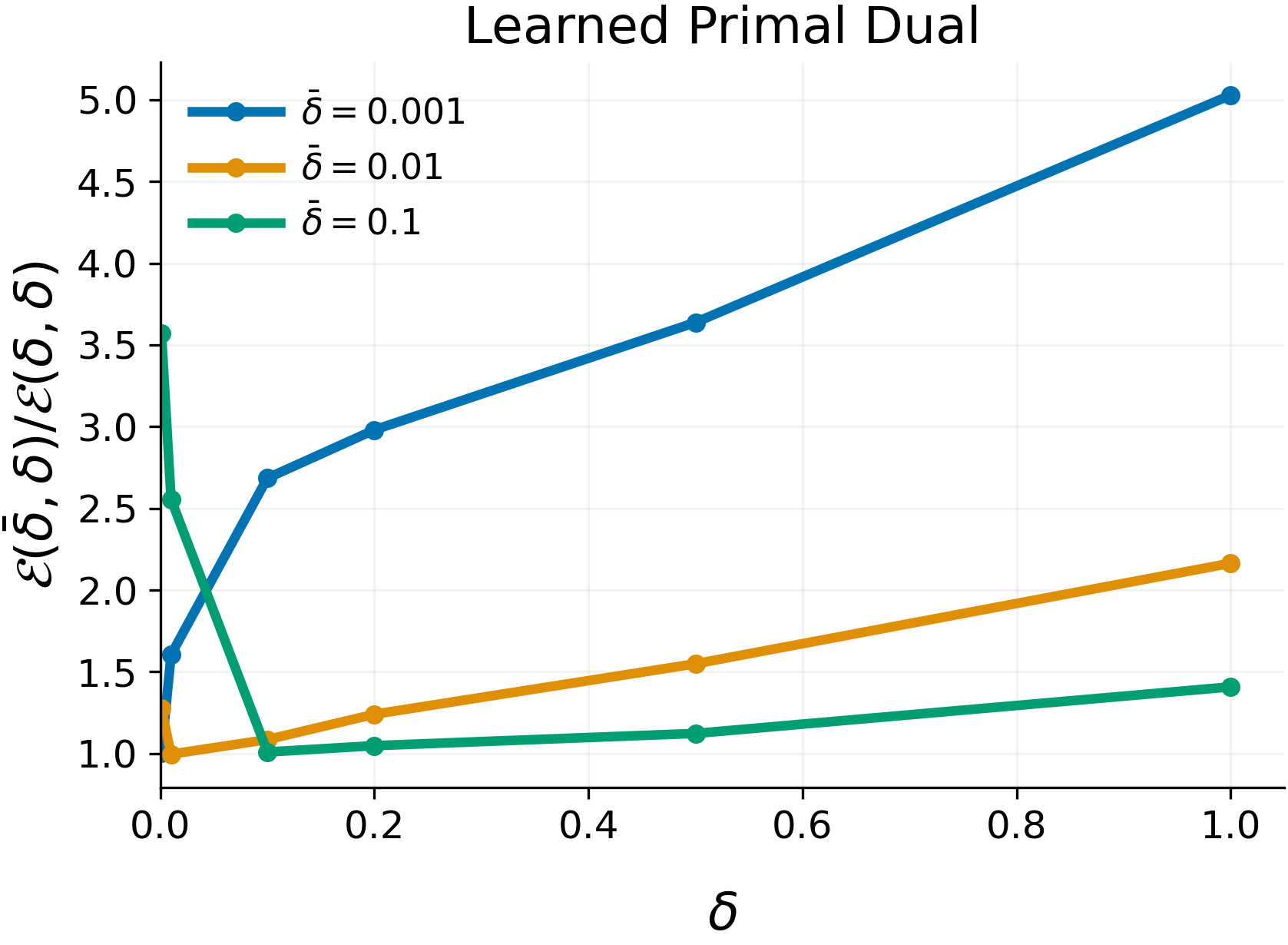}
  \end{subfigure}\hfill
  \begin{subfigure}[t]{0.32\textwidth}\centering
    \includegraphics[width=\linewidth]{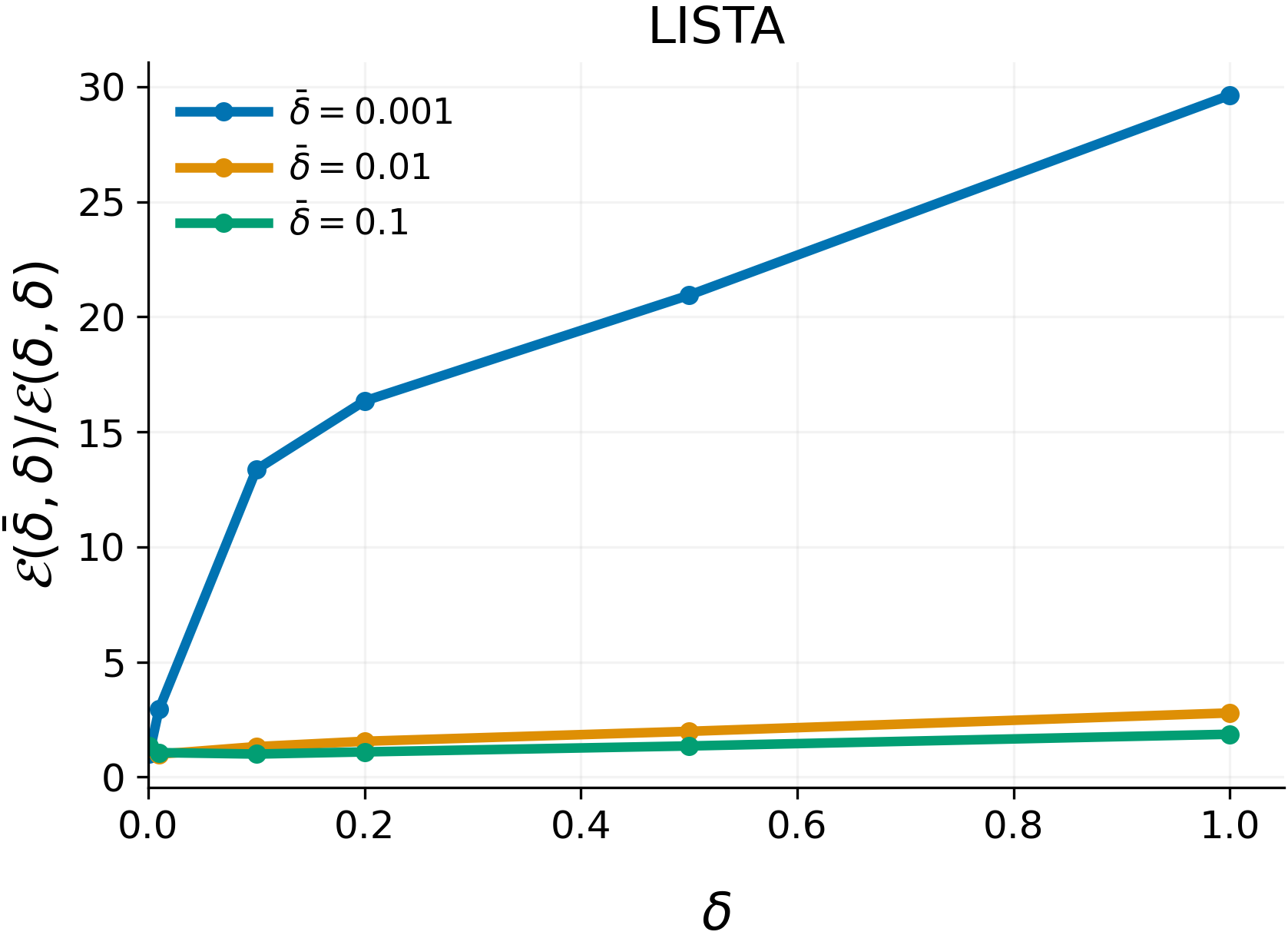}
  \end{subfigure}
  \caption{Reconstruction errors for data in $X_{8}$ and $A_I$ with Tikhonov regularization ($\rho = 0.23$) \emph{(left)}, LPD \emph{(middle)}, and LISTA \emph{(right)}.
  We plot the errors at fixed regularization level $\bar \delta$ over varying $\delta$~\emph{(upper row)} and over varying  $\bar \delta$ at fixed $\delta$~\emph{(middle row)} as well as the corresponding relative errors~\emph{(bottom row)}.
  The dashed lines correspond to the analytical bounds as in Corollary~\ref{cor:wc_error}.}
  \label{fig:wc_grid_int_nd}
\end{figure}

\subsection{Analysis of truncated Tikhonov regularization}\label{subsec:trunc_tikh}
So far, the SVD of $A$ was only used for proving results.
Below, we assume access to it and consider the truncated Tikhonov regularization
\begin{equation}\label{eq:TruncTik}
    T_\alpha^{(M)} y^\delta = \sum_{i=1}^M \frac{\sigma_i}{\sigma_i^2+\alpha}\,\langle y^\delta, u_i\rangle v_i,
\end{equation}
which remains independent of $X_N$.
The truncated Tikhonov regularization \eqref{eq:TruncTik} has two regularization parameters, namely $M$ and $\alpha$.
Setting $\alpha = 0$ yields the classical truncated SVD.
If the singular values of $A$ exhibit a prescribed decay and if $X_N$ is nice (according to Assumption \ref{ass:2}), \eqref{eq:TruncTik} admits explicit error bounds that depend on $M$, $N$ and on the decay rate of the singular values.
The following corollary illustrates this for the case $\sigma_j  = j^{-1}$.

\begin{corollary}\label{cor:ndim_2}
    Assume $\sigma_j = j^{-1}$ for all $j \geq 1$ and that the requirements of Theorem~\ref{thm:n_dim2} hold.
    If $M\geq N$, we obtain the error bound
    \begin{align}
        \| T^{(M)}_{\alpha} y^\delta - x^\dagger \| &\leq \min\Big\{\frac{M\delta}{1+\alpha M^2}, \frac{\delta}{2\sqrt{\alpha}}\Big\} + \min\Big\{N\alpha \rho, \frac{\sqrt{\alpha}}{2}\rho\Big\} \\
        &= \begin{cases}
            \frac{M\delta}{1+\alpha M^2} + N\alpha \rho &\text{if } \sqrt{\alpha} \leq \min \left\{\frac{1}{M}, \frac{1}{2N} \right\}, \notag \\[1.5mm]
            \frac{\delta}{2\sqrt{\alpha}} +  N\alpha \rho  &\text{if } \frac{1}{M} < \sqrt{\alpha} \leq \frac{1}{2N}, \\[1mm] 
            \frac{M\delta}{1+\alpha M^2} + \frac{\sqrt{\alpha}}{2} \rho &\text{if } \frac{1}{2N} < \sqrt{\alpha} \leq \frac{1}{M}, \\[1mm]
            \frac{1}{2}\left( \frac{\delta}{\sqrt{\alpha}} + \sqrt{\alpha} \rho \right) &\text{if } \sqrt{\alpha} > \max\left\{\frac{1}{M}, \frac{1}{2N}\right\}.
        \end{cases}
    \end{align}
\end{corollary}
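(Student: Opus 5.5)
We split the error as in \eqref{eq:ErrorDecompose}, now with $T^{(M)}_\alpha$ in place of $T_\alpha$:
\begin{equation*}
\|T^{(M)}_\alpha y^\delta - x^\dagger\| \le \|T^{(M)}_\alpha(y^\delta-y^\dagger)\| + \|T^{(M)}_\alpha y^\dagger - x^\dagger\|.
\end{equation*}
We then bound the two terms separately and finally resolve the minima into the four cases.

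\emph{Data error.} In the SVD basis, $T^{(M)}_\alpha$ acts diagonally with coefficients $g(\sigma_j)=\sigma_j/(\sigma_j^2+\alpha)$ for $j\le M$ and $0$ otherwise. Hence $\|T^{(M)}_\alpha(y^\delta-y^\dagger)\|\le \delta\max_{j\le M} g(\sigma_j)$. The global bound $g(\sigma)\le 1/(2\sqrt\alpha)$ from \eqref{eq:EstFilter} still applies. For a second bound we use $\sigma_j=j^{-1}\ge 1/M$. The function $g$ increases on $(0,\sqrt\alpha]$ and decreases on $[\sqrt\alpha,1]$. If $\sqrt\alpha\le 1/M$, every admitted $\sigma_j$ lies in the decreasing branch, so the maximum is attained at $\sigma_M=1/M$, where $g(1/M)=M/(1+\alpha M^2)$. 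If instead $\sqrt\alpha>1/M$, the peak value $1/(2\sqrt\alpha)$ is admissible, and it is the smaller of the two expressions because $M/(1+\alpha M^2)\ge 1/(2\sqrt\alpha)$ always holds by AM--GM. Both quantities are upper bounds, so their minimum is one as well, and the minimum equals the correct expression in each regime.

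\emph{Approximation error.} By Assumption~\ref{ass:2}(i) we have $x^\dagger=A^\ast z\in X_N$ with $z\in\mathrm{range}(A_N)$. The key observation is that truncation does not change $x^\dagger$. Indeed, $x^\dagger=\sum_j \sigma_j\langle z,u_j\rangle v_j$ has only finitely many active coefficients (it lies in the SVD-aligned subspace $X_N$). Provided these coefficients sit among the first $M$ indices, which is what $M\ge N$ is meant to guarantee, we get $T^{(M)}_\alpha y^\dagger=T_\alpha y^\dagger$. \textbf{This is the step I expect to be the main obstacle.} It needs $X_N\subseteq\mathrm{span}\{v_1,\dots,v_N\}$, or at least that the truncation leaves $y^\dagger$ untouched, and $M\ge N$ alone does not give this for an arbitrary $N$-dimensional subspace. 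I would make this explicit, either by reading Assumption~\ref{ass:2} as placing $X_N$ in the leading singular directions, or by noting that $\mathrm{range}(A_N)\subseteq\mathrm{span}\{u_1,\dots,u_M\}$ is exactly what is required.

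Granting this, the proof of Theorem~\ref{thm:n_dim2} gives the two bounds $\sqrt\alpha\rho/2$ (from \eqref{eq:EstApproxError}) and $\alpha CN\rho$ (from Assumption~\ref{ass:2}(ii)), with $C=1$ as in Remark~\ref{rem:op_bound} since $\sigma_j=j^{-1}$. Their minimum is $N\alpha\rho$ exactly when $\sqrt\alpha\le 1/(2N)$, and $\sqrt\alpha\rho/2$ otherwise.

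\emph{Case table.} Adding the two minima, the switching thresholds are $\sqrt\alpha=1/M$ for the data term and $\sqrt\alpha=1/(2N)$ for the approximation term. Comparing $\sqrt\alpha$ with both thresholds gives the four listed cases directly.
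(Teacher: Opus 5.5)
\textbf{The step that fails: the data-term minimum.} Your overall route is the paper's: the error splitting, the monotonicity of $g(\sigma)=\sigma/(\sigma^2+\alpha)$ over $\sigma_j=j^{-1}\ge 1/M$, and reuse of Theorem~\ref{thm:n_dim2}. However, your justification of the minimum in the data term is wrong.

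AM--GM gives $1+\alpha M^2\ge 2\sqrt{\alpha}\,M$, i.e.\ $\frac{M}{1+\alpha M^2}\le\frac{1}{2\sqrt{\alpha}}$ for \emph{every} $\alpha>0$. This is the reverse of what you claim. Your version would also make the minimum equal $\frac{\delta}{2\sqrt\alpha}$ in cases 1 and 3, which contradicts the case table.

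Moreover, $\frac{M\delta}{1+\alpha M^2}$ is not an upper bound when $\sqrt{\alpha}>1/M$. In that regime $\max_{j\le M}g(\sigma_j)$ sits near $j\approx 1/\sqrt{\alpha}$, not at $j=M$. Concretely, take $\alpha=10^{-2}$, $M=100$, $N=1$, $x^\dagger=0$, $\rho=\delta$ and $y^\delta=\delta u_{10}$. Then $\|T^{(M)}_\alpha y^\delta-x^\dagger\|=5\delta$, while the right-hand side of the first line equals $\frac{100}{101}\delta+10^{-2}\delta\approx\delta$. So ``both quantities are upper bounds'' is false.

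In fact the min-form line cannot be proved at all. Its data-term minimum is always $\frac{M\delta}{1+\alpha M^2}$, so it is strictly smaller than the case table in cases 2 and 4, and it is not a valid bound there.

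\textbf{What can be proved.} Your monotonicity argument does prove the piecewise data bound: $\frac{M\delta}{1+\alpha M^2}$ if $\sqrt\alpha\le 1/M$, and $\frac{\delta}{2\sqrt\alpha}$ otherwise. This is also all the paper's proof establishes; it never argues the minimum. Combine it with $\min\{N\alpha\rho,\frac{\sqrt\alpha}{2}\rho\}$ for the approximation term. That minimum is legitimate, since both terms are valid bounds. The result is exactly the four-case table. So drop the AM--GM claim and state the data term piecewise.

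\textbf{Your approximation-term concern is justified.} The paper only says this estimate ``follows directly from Theorem~\ref{thm:n_dim2}''. That tacitly needs $x^\dagger\in\mathrm{span}\{v_1,\dots,v_M\}$, for example $X_N=\mathrm{span}\{v_1,\dots,v_N\}$, which the paper makes explicit only before Theorem~\ref{lemma:expectation}. Under that reading, $T^{(M)}_\alpha y^\dagger=T_\alpha y^\dagger$ and $C=1$, exactly as you propose.
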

\begin{proof}
    The estimate of the approximation error follows directly from Theorem~\ref{thm:n_dim2}.
    Thus, it remains to refine the data error estimate.
    The function $F(x)=\frac{x}{1+\alpha x^2}$ is increasing on $(0,1/\sqrt{\alpha}]$ and decreasing on $[1/\sqrt{\alpha},\infty)$.
    Hence, it holds that
    \begin{equation}
        \|T^{(M)}_\alpha(y^\delta-y^\dagger)\| \leq \sup_{j = 1, \dots, M} \biggl | \frac{\sigma_j}{\sigma_j^2+\alpha}\biggr | \delta  = \sup_{j = 1, \dots, M} \left | \frac{j}{1+\alpha j^2}\right | \delta \leq \begin{cases}
            \frac{M \delta}{1+\alpha M^2} &\text{if } \sqrt{\alpha} \leq \frac{1}{M},\\
            \frac{\delta}{2\sqrt{\alpha}} &\text{if } \sqrt{\alpha} > \frac{1}{M},
        \end{cases}
    \end{equation}
    which concludes the proof.
\end{proof}

Below, we consider the special case that $X_N$ is spanned by the first $N$ singular vectors of $A$.
To this end, we use generic noise description, which includes standard Gaussian noise as special case.

\begin{theorem}\label{lemma:expectation}
    Let $x^\dagger = \sum_{i=1}^N c_i v_i$ with $ c_i \coloneqq \langle x^\dagger,v_i\rangle \neq 0$ for $i=1,\ldots,N$, and assume that the noise-perturbed data are given by $y^\delta = Ax^\dagger + \eta$, where $\eta$ defines random noise with probability density $p_H$. Suppose that for $\eta_i \coloneqq \langle \eta,u_i\rangle$ it holds 
    \begin{equation}
        \E_{p_H}[\eta_i] = 0, \qquad \E_{p_H}[\eta_i^2] = \beta_{\eta,i}^2 < \infty \qquad \text{for all } i \in \N.
    \end{equation}
    Then, for $E_M \coloneqq T_\alpha^{(M)}y^\delta-x^\dagger$ with $2\alpha \geq \max\{ 0, \max_{m < N} \beta_{\eta,m+1}^2 / c_{m+1}^2 - \sigma_{m+1}^2\}$, it holds
    \begin{equation}\label{eq:alpha_cond}
        \E_{p_H}\bigl[\|E_N\|^2] \leq \E_{p_H}\bigr[\|E_M \|^2] \qquad \text{for all } M \in \N. 
    \end{equation}
\end{theorem}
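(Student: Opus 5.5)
Using the SVD, we will compute $\E_{p_H}[\|E_M\|^2]$ exactly, coefficient by coefficient. Since $x^\dagger = \sum_{i\le N} c_i v_i$, we have $\langle y^\delta,u_i\rangle = \sigma_i c_i\mathbb{1}_{i\le N} + \eta_i$. Hence
\begin{equation*}
E_M = \sum_{i=1}^{M}\Bigl(\tfrac{\sigma_i}{\sigma_i^2+\alpha}(\sigma_i c_i\mathbb{1}_{i\le N}+\eta_i)\Bigr)v_i - \sum_{i=1}^N c_i v_i .
\end{equation*}
By orthonormality of $(v_i)$, $\|E_M\|^2$ splits into a sum over $i$. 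For $i\le\min\{M,N\}$ the $i$-th term is $\bigl(-\tfrac{\alpha}{\sigma_i^2+\alpha}c_i + \tfrac{\sigma_i}{\sigma_i^2+\alpha}\eta_i\bigr)^2$. For $M<i\le N$ it is $c_i^2$, and for $N<i\le M$ it is $\tfrac{\sigma_i^2}{(\sigma_i^2+\alpha)^2}\eta_i^2$. Taking expectations and using $\E[\eta_i]=0$ kills the cross terms, so we will write $\E\|E_M\|^2=\sum_i a_i(M)$ with
\begin{equation*}
a_i = \frac{\alpha^2c_i^2+\sigma_i^2\beta_{\eta,i}^2}{(\sigma_i^2+\alpha)^2}\ (i\le \min\{M,N\}),\qquad a_i=c_i^2\ (M<i\le N),\qquad a_i=\frac{\sigma_i^2\beta_{\eta,i}^2}{(\sigma_i^2+\alpha)^2}\ (N<i\le M).
\end{equation*}
Only finitely many terms appear for fixed $M$, so no summability issues arise.

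We then compare $M$ with $N$ in two cases. If $M>N$, then $\E\|E_M\|^2-\E\|E_N\|^2=\sum_{i=N+1}^M \tfrac{\sigma_i^2\beta_{\eta,i}^2}{(\sigma_i^2+\alpha)^2}\ge0$ trivially. If $M<N$, the difference equals
\begin{equation*}
\sum_{i=M+1}^N \Bigl(c_i^2 - \tfrac{\alpha^2c_i^2+\sigma_i^2\beta_{\eta,i}^2}{(\sigma_i^2+\alpha)^2}\Bigr),
\end{equation*}
and it suffices to show that each summand is nonnegative. Multiplying by $(\sigma_i^2+\alpha)^2$, this is equivalent to $c_i^2\bigl((\sigma_i^2+\alpha)^2-\alpha^2\bigr)\ge \sigma_i^2\beta_{\eta,i}^2$, that is, $c_i^2\sigma_i^2(\sigma_i^2+2\alpha)\ge\sigma_i^2\beta_{\eta,i}^2$. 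For $\sigma_i>0$ (true for $i\le N$ since $c_i\ne0$ and $x^\dagger$ satisfies the source condition, or simply because $A$ is injective on the first $N$ singular directions), this reduces to $2\alpha\ge \beta_{\eta,i}^2/c_i^2-\sigma_i^2$. This is exactly the hypothesis with $i=m+1$, $m<N$. The assumption $c_i\neq 0$ is needed to divide here.

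The main obstacle is just the bookkeeping: getting the three index regimes right and noting that the cross terms vanish by $\E\eta_i=0$. The key observation is that the per-index comparison reduces to $(\sigma^2+\alpha)^2-\alpha^2=\sigma^2(\sigma^2+2\alpha)$, which produces precisely the stated threshold on $2\alpha$. Combining the two cases gives \eqref{eq:alpha_cond} for all $M\in\N$.
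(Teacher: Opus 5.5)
Your proposal is correct and follows essentially the same route as the paper: the same mode-wise expansion of $E_M$ in the basis $(v_i)_i$, elimination of the cross terms via $\E_{p_H}[\eta_i]=0$, and the same per-index condition $2\alpha \ge \beta_{\eta,i}^2/c_i^2-\sigma_i^2$. The only difference is cosmetic: the paper works with consecutive differences $\E_{p_H}[\|E_{M+1}\|^2]-\E_{p_H}[\|E_M\|^2]$, showing monotonicity on each side of $N$, whereas you add up those same increments and compare with $E_N$ directly.
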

\begin{proof}
We have
\begin{equation}
    \langle y^\delta,u_i\rangle =
        \begin{cases}
        \sigma_i c_i + \eta_i, & i \le N,\\
        \eta_i, & i>N.
        \end{cases}
\end{equation}
and 
\begin{equation}
    E_M = T_\alpha^{(M)}y^\delta-x^\dagger  = \sum_{i=1}^M \frac{\sigma_i}{\sigma_i^2+\alpha} \eta_i v_i + \sum_{i=1}^{\min\{M,N\}}\Bigl(-\frac{\alpha}{\sigma_i^2+\alpha}\Bigr)c_i v_i - \sum_{i=M+1}^N c_i v_i.
\end{equation}
Thus, the coefficients of $E_M$ in the orthonormal basis $(v_i)_i$ are
\begin{equation}\label{eq:coeffs}
    e_i^{(M)} \coloneqq
\begin{cases}
-\dfrac{\alpha}{\sigma_i^2+\alpha} c_i
+ \dfrac{\sigma_i}{\sigma_i^2+\alpha}\,\eta_i, & i \le \min\{M,N\},\\[0.7em]
\dfrac{\sigma_i}{\sigma_i^2+\alpha}\,\eta_i, & N < i \le M,\\[0.4em]
-c_i, & M < i \le N,\\[0.2em]
0, & i> \max\{M,N\}.
\end{cases}
\end{equation}
Since $(v_i)_i$ is orthonormal, we have
\begin{equation}
    \|E_M\|^2 = \sum_{i\ge 1} |e_i^{(M)}|^2 \quad \text{ and } \quad \E_{p_H} \bigl[\|E_M\|^2\bigr] = \sum_{i\ge 1} \E_{p_H} \bigl[|e_i^{(M)}|^2\bigr].
\end{equation}
For $i \le M$, we can use \eqref{eq:coeffs} and $\E_{p_H}[\eta_i]=0$ to obtain
\begin{equation}\label{eq:ErrorExpect}
    \E_{p_H}\bigl[|e_i^{(M)}|^2\bigr] = a_i^2 c_i^2 + b_i^2 \beta_{\eta,i}^2, \qquad a_i \coloneqq \frac{\alpha}{\sigma_i^2+\alpha}, \,\, b_i \coloneqq \frac{\sigma_i}{\sigma_i^2+\alpha}.
\end{equation}

\paragraph{Case $M \ge N$.}
Using \eqref{eq:ErrorExpect} and  that $c_i = 0$ for $i > N$, we get
\begin{equation}
\E_{p_H}\bigl[\|E_M\|^2\bigr]
= \sum_{i=1}^N \bigl(a_i^2 c_i^2 + b_i^2 \beta_{\eta,i}^2\bigr)
   + \sum_{i=N+1}^M b_i^2 \beta_{\eta,i}^2
\eqcolon C + \sum_{i=1}^M b_i^2 \beta_{\eta,i}^2,
\end{equation}
where $C \coloneqq \sum_{i=1}^N a_i^2 c_i^2$ is independent of $M$.
Similarly, we obtain
\begin{equation}
    \E_{p_H}\bigl[\|E_{M+1}\|^2\bigr]
= C + \sum_{i=1}^{M+1} b_i^2 \beta_{\eta,i}^2.
\end{equation}
Hence, we get that
\begin{equation}
    \E_{p_H}\bigl[\|E_{M+1}\|^2\bigr] - \E_{p_H}\bigl[\|E_M\|^2\bigr]= b_{M+1}^2 \beta_{\eta,M+1}^2 = \Bigl(\frac{\sigma_{M+1}}{\sigma_{M+1}^2+\alpha}\Bigr)^2 \beta_{\eta,M+1}^2 \ge 0.
\end{equation}
Thus, we have $\E_{p_H}[\|E_{M}\|^2] \geq \E_{p_H}[\|E_N\|^2]$ for all $M \ge N$ and all $\alpha > 0$.

\paragraph{Case $M < N$.}
Using \eqref{eq:ErrorExpect} and  \eqref{eq:coeffs}, we get
\begin{equation}
\E_{p_H}\bigl[\|E_M\|^2\bigr] = \sum_{i=1}^M \bigl(a_i^2 c_i^2 + b_i^2 \beta_{\eta,i}^2\bigr) + \sum_{i=M+1}^N \E_{p_H}\bigl[|e_i^{(M)}|^2\bigr] = \sum_{i=1}^M \bigl(a_i^2 c_i^2 + b_i^2 \beta_{\eta,i}^2\bigr)+ \sum_{i=M+1}^N c_i^2.
\end{equation}
For $M+1$, we similarly obtain
\begin{equation}
    \E_{p_H}\bigl[\|E_{M+1}\|^2\bigr] = \sum_{i=1}^{M+1} \bigl(a_i^2 c_i^2 + b_i^2 \beta_{\eta,i}^2\bigr)+ \sum_{i=M+2}^N c_i^2,
\end{equation}
where the second sum is empty if $M+1 =N$.
Thus, we have
\begin{equation}
   \E_{p_H}\bigl[\|E_{M+1}\|^2\bigr] - \E_{p_H}\bigl[\|E_M\|^2\bigr]= (a_{M+1}^2 - 1)c_{M+1}^2 + b_{M+1}^2 \beta_{\eta,M+1}^2.
\end{equation}
In order for $\E_{p_H}[\| E_{M+1}\|^2] - \E_{p_H}[\| E_M\|^2] \le 0 $ to hold, we require 
\begin{equation}
   (a_{M+1}^2 - 1)c_{M+1}^2 + b_{M+1}^2 \beta_{\eta,M+1}^2 = c_{M+1}^2\left( \left( \frac{\alpha}{\sigma_{M+1}^2 + \alpha}\right)^2 - 1\right) + \left( \frac{\sigma_{M+1}}{\sigma_{M+1}^2+\alpha}\right)^2 \beta_{\eta,M+1}^2 \leq 0.
\end{equation}
The latter is equivalent to 
\begin{equation}
    (\sigma_{M+1}^2+\alpha)^2 - \frac{\sigma_{M+1}^2\beta_{\eta,M+1}^2}{c_{M+1}^2} \geq \alpha^2,
\end{equation}
which holds precisely if
\begin{equation}
    2\alpha \geq \frac{\beta_{\eta,M+1}^2}{c_{M+1}^2}-\sigma_{M+1}^2.
\end{equation}
Therefore, if we choose $\alpha$ sufficiently large, we have $\E_{p_H}[\| E_N \|^2] \leq \E_{p_H}[\| E_M \|^2]$ for all $M < N$. 
\end{proof}

Theorem \ref{lemma:expectation} states that there exists a regularization parameter $\alpha$ (depending on $x^\dagger$) for which truncating exactly at the number of relevant singular vectors $N$ minimizes the expected squared reconstruction error over all truncation levels $M \in \N$.
Note that $\alpha$ satisfying~\eqref{eq:alpha_cond} does not necessarily correspond to an optimal regularization parameter and may therefore lead to suboptimal reconstruction results.
Still, one can estimate $N$ by scanning over $M$ and selecting the minimizer of \smash{$\E_{p_H}(\|T^{(M)}_\alpha y^\delta-x^\dagger\|^2)$}.
By doing so for multiple $x^\dagger$, we can estimate the intrinsic data dimension (in terms of relevant basis vectors) based on the SVD basis.
This compensates for the facts that the optimal subspace dimension may vary between samples, and that the reconstruction error can be influenced by sample-specific features.
Note that the procedure is primarily of theoretical interest.
In particular, the operator dependent SVD basis often does coincide with the actual subspace basis for the given data.
Thus, our experiments should be seen mainly as illustration for how the intrinsic dimension influences the behavior of the reconstruction process.

\begin{remark}
The assumptions of Theorem~\ref{lemma:expectation} are in particular satisfied for Gaussian noise. If $\eta \sim \mathcal N(0,\delta^2 I)$ in $Y$, the noise coefficients $\eta_i$ are i.i.d.\ with distribution $\mathcal N(0,\delta^2)$, and hence
\begin{equation}
    \E[\eta_i] = 0, \qquad \E[\eta_i^2] = \delta^2 < \infty \quad \text{for all } i \in \N.
\end{equation}
More generally, if $\eta \sim \mathcal{N}(0,\Sigma)$ with covariance $\Sigma$, then $\E[\eta_i]=0$ and ${\E[\eta_i^2]=\langle \Sigma u_i, u_i\rangle < \infty}$, so that Theorem~\ref{lemma:expectation} applies whenever these variances are finite.
\end{remark}

\begin{remark}\label{rem:other_basis}
Although Theorem~\ref{lemma:expectation} is proved only for the SVD basis, it seems natural to expect similar behavior for other bases that are somehow well-aligned with the singular vectors of $A$.
The reason is that the proof relies on a mode-wise decomposition of the reconstruction error into approximation and noise contributions, expressed in terms of the coefficients $c_i = \langle x^\dagger, v_i \rangle$ and $\eta_i = \langle \eta, u_i \rangle$.
Thus, if the scalar products $\langle b_i,  v_j\rangle_{ij}$ are sufficiently localized at small $j$, then the transformed coefficients still interact predominantly with large singular components.
This can be seen as kind of an identifiability condition.
In that case, the reconstruction error is expected to exhibit a behavior similar to the SVD setting, even though additional cross-terms appear.
\end{remark}

\subsection{Numerical results for subspace-constrained reconstruction}~\label{subsec:num_results}

Throughout, we consider the discrete Radon operator $A_R$ and the discretized $L^2$-norm introduced in Section~\ref{subsec:num_exp_full}. 
The singular values of the Radon operator are known to decay polynomially, specifically as $\sigma_N \sim n^{-1/2}$ (see, e.g., \cite{Louis1989}), which we therefore also expect approximately for $A_R$.
Thus, when using the SVD basis, Assumption~\ref{ass:2}(ii) is naturally satisfied.
As discussed in Remark~\ref{rem:other_basis}, while the theoretical results are derived for the SVD basis, we expect similar qualitative behavior for other bases that are  well aligned with the singular vectors. 
Without making this formal, we thus extend our experiments to finite-dimensional linear subspaces
\begin{equation}
    X_{N} \coloneqq \mathrm{span}\{b_1,\dots,b_{N}\} \subset X = \R^n,
\end{equation}
where the basis $(b_j)_{j=1}^n$ can be for example the
\begin{itemize}
    \item[(a)] SVD basis $b_j = v_j$ for $j=1,\dots,n$; or
    \item[(b)] the Coordinate basis $b_j = e_{k_j}$, where $(e_k)_{k=1}^n$ denotes the unit vectors and $(k_j)_{j=1}^n$ is a random permutation of $\{1,\dots,n\}$\footnote{The resulting permuted basis remains fixed to ensure consistency of the subspaces $(X_M)_M$}; or
    \item[(c)] the first $n$ principal components~(PCA) for the given data.
\end{itemize}

In the reconstruction stage, we perform Tikhonov regularization for the operator $A_R$ restricted to the subspace $X_M=\mathrm{span}\{b_1,\dots,b_M\}$ with $M \leq n$ by minimizing the restriction of the Tikhonov functional to $X_M$, namely
\begin{equation}\label{eq:ndim_rec}
    w_{i,\alpha}^\delta = \argmin_w \frac{1}{2}\| y_i^\delta - A_Mw \|^2_Y + \frac{\alpha}{2}\|B_M w\|_X^2 \quad \text{ and } \quad  x^\delta_{i, \alpha} = B_M w_{i,\alpha}^\delta,
\end{equation}
where $B_M \coloneqq [b_1, \hdots, b_M]$  and $A_M \coloneqq A B_M$ for $M \leq n$.
If $B_M=[v_1,\dots,v_M]$ as in (a), then the minimization of \eqref{eq:ndim_rec} reduces to the truncated Tikhonov regularization \eqref{eq:TruncTik}, namely
\begin{equation}
    x^{\delta}_{i,\alpha} = T^{(M)}_\alpha y_i^\delta 
    = \sum_{j=1}^M \frac{\sigma_j}{\sigma_j^2+\alpha}\,\langle y^{\delta}_i,u_j\rangle_Y\, v_j.
\end{equation}

For our experiments, we consider two types of data: (i) simulated signals that lie exactly in $X_{N}$, and (ii) the MNIST data set.
Regarding the simulated data, given a basis $(b_j)_{j=1}^n$ (we use (a) and (b) for our experiments), we fix $N \leq n$ and generate samples 
\begin{equation}
    x_i \coloneqq \sum_{j=1}^{N} c^j_i b_j \in X_N \quad \text{with }
     c^j_i \sim \mathcal U[-1,1], \quad \text{for } i = 1,\dots,L.
\end{equation}
Then, we compute noisy measurements as $y_i^\delta = A_Rx_i + \eta_i$ with ${\eta_i \sim \mathcal{N}(0,\delta^2 I_m)}$.
From this, we try to to estimate the dimension based on the same basis $(b_j)_{j=1}^n$.
MNIST images are highly structured and thus effectively low-dimensional.
Thus, we expect that they can be also well-approximated by a subspace of small dimension $N$. 
To test this, we use the theoretically covered SVD basis as in (a), and additionally compare against a more appropriate basis formed by the first $M$ principle components for the MNIST training set.
We average over 100 reconstructions per data sample, using different noise realizations sampled from $\mathcal{N}(0,\delta^2 I_m)$.
This is done for $L=50$ different samples, computing \smash{$\mathcal{E}(\alpha,\delta) = \frac{1}{L}\sum_{i=1}^L\|x_{i,\alpha}^{\delta} - x_i \|_X$}. 
Note that Assumption~\ref{ass:2}(i) is only strictly fulfilled for the simulated data, where the source condition is explicitly enforced. Consequently, among the considered settings, only the combination of simulated data and the SVD basis fully satisfies Assumption~\ref{ass:2} and thus the requirements of Theorem \ref{lemma:expectation}.

\begin{figure}[t]
\begin{subfigure}{\textwidth}
    \centering
    \includegraphics[width=0.85\linewidth]{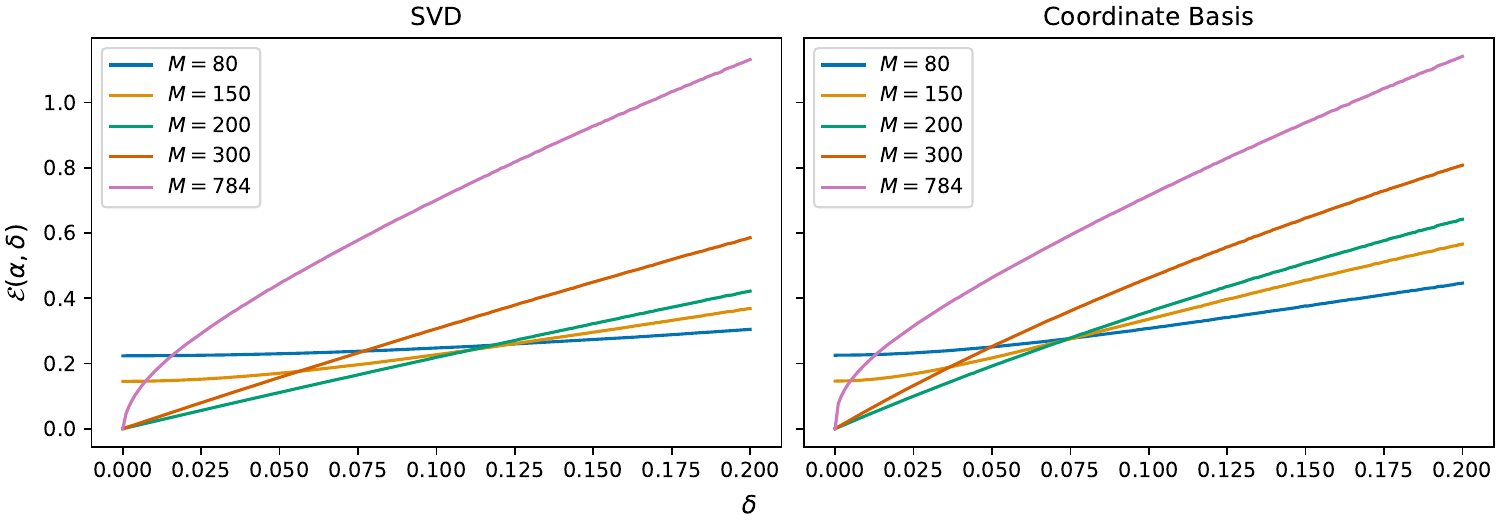}
    \caption{Simulated data with $N=200$  and regularization parameter $\alpha(\delta) = 0.01\delta$. \emph{(left)}~Using the first $M$ components of the SVD basis (a); \emph{(right)}~using $M$ the permuted unit vector basis (b).}
    \label{fig:reddim_simulated}
\end{subfigure}

\begin{subfigure}{\textwidth}
    \centering
    \includegraphics[width=0.85\linewidth]{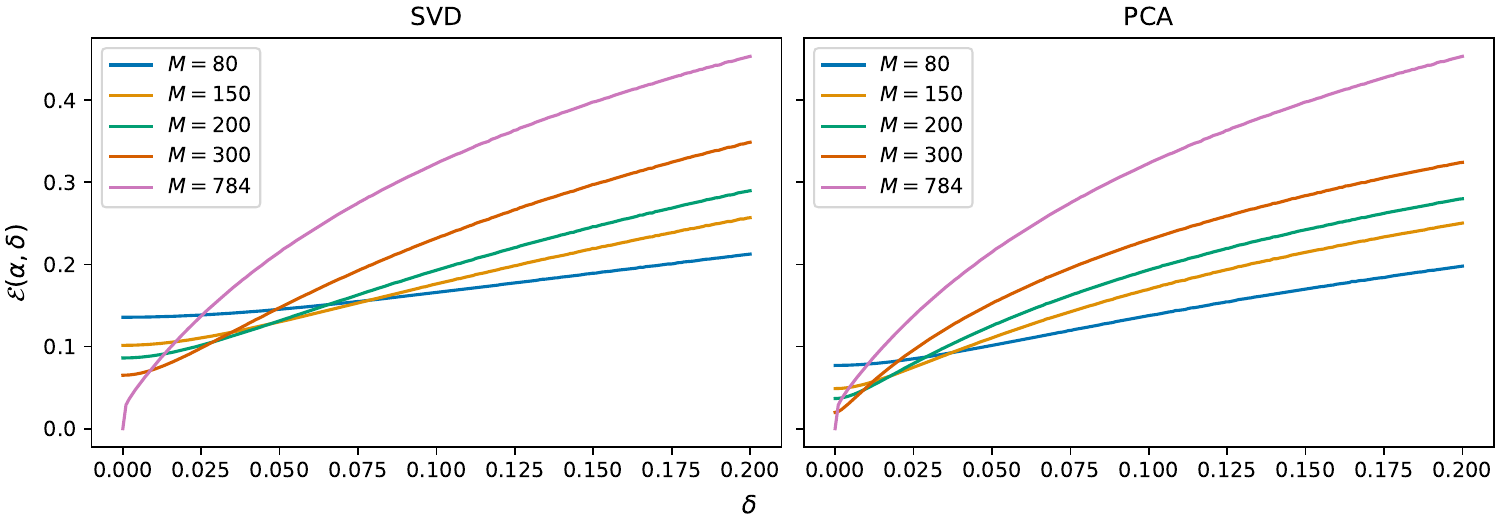}
    \caption{MNIST data set with regularization parameter $\alpha(\delta) = 0.1\delta$. \emph{(left)}~Using the first $M$ components of the SVD basis (a); \emph{(right)}~using the first $M$ principle components for the training set.}
    \label{fig:reddim_mnist}
\end{subfigure}
\caption{Reconstruction errors using truncated Tikhonov regularization for simulated data \emph{(top)} and MNIST (\emph{bottom}) plotted against varying noise levels defined by $\delta$.
In all subplots, we fix one basis for reconstruction and data generation.}
\label{fig:reddim_overdelta}
\end{figure}

The errors $\mathcal{E}(\alpha,\delta)$ for both data sets with respect to each selected basis are shown in Figure~\ref{fig:reddim_overdelta}.
For the simulated data, we observe that reconstructing in the true intrinsic dimension $N=200$ yields consistently smaller errors than using any $M>N$. 
For $M<N$, truncation becomes advantageous once the noise level exceeds a certain threshold, reflecting the classical trade-off between data and approximation error: beyond that point, including additional components primarily amplifies noise rather than improving approximation.
For the MNIST data, we see a similar trend.
Still, in the very low-noise regime, the full ambient dimension performs best.
This is consistent with the fact that MNIST images lie only approximately in a small linear subspace.
Interestingly, when using PCA, the trade-off between subspace dimension and reconstruction error becomes visible already at smaller noise levels.
This can be explained by the fact that PCA is a data-based basis that is well-adapted to the MNIST data compared to the SVD basis, which is instead adapted to the operator $A_R$.
In particular, PCA orders the directions by decreasing data variance, so that the low-variance components mostly fit noise rather than meaningful image structure, making a smaller $M$ preferable.

\begin{figure}[t]
\begin{subfigure}{\textwidth}
    \centering
    \includegraphics[width=0.85\linewidth]{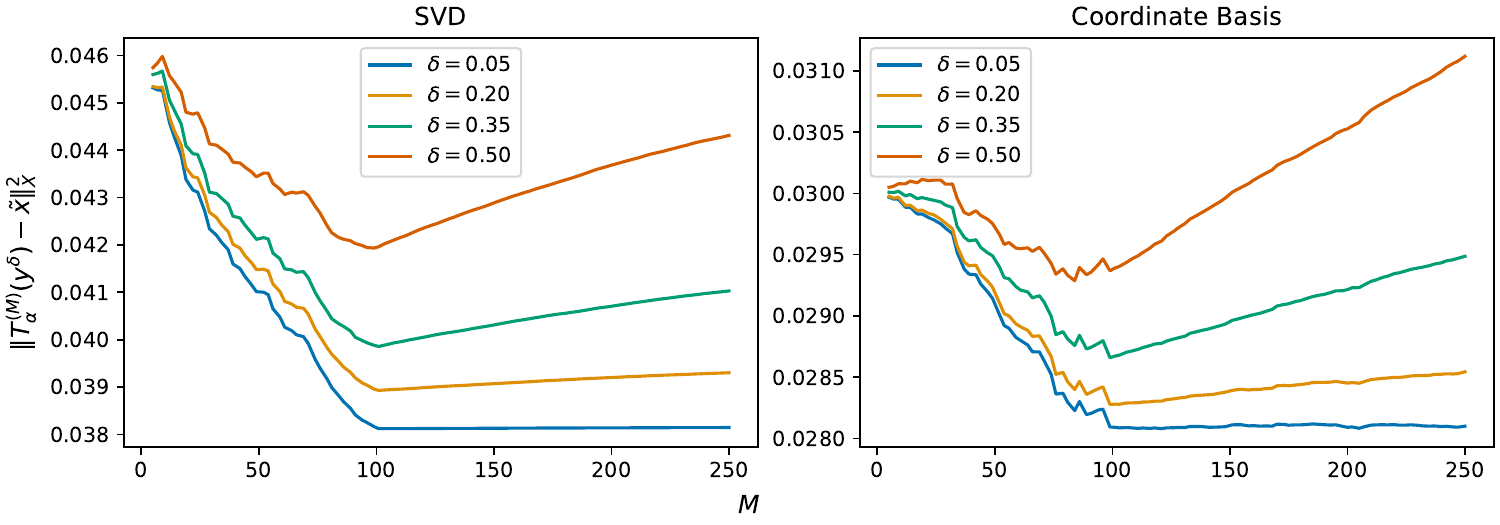}
    \caption{Simulated data sample with $N = 100$.}
    \label{fig:lemma_sim}
\end{subfigure}

\vspace{5mm}
\begin{subfigure}{\textwidth}
    \centering
    \includegraphics[width=0.85\linewidth]{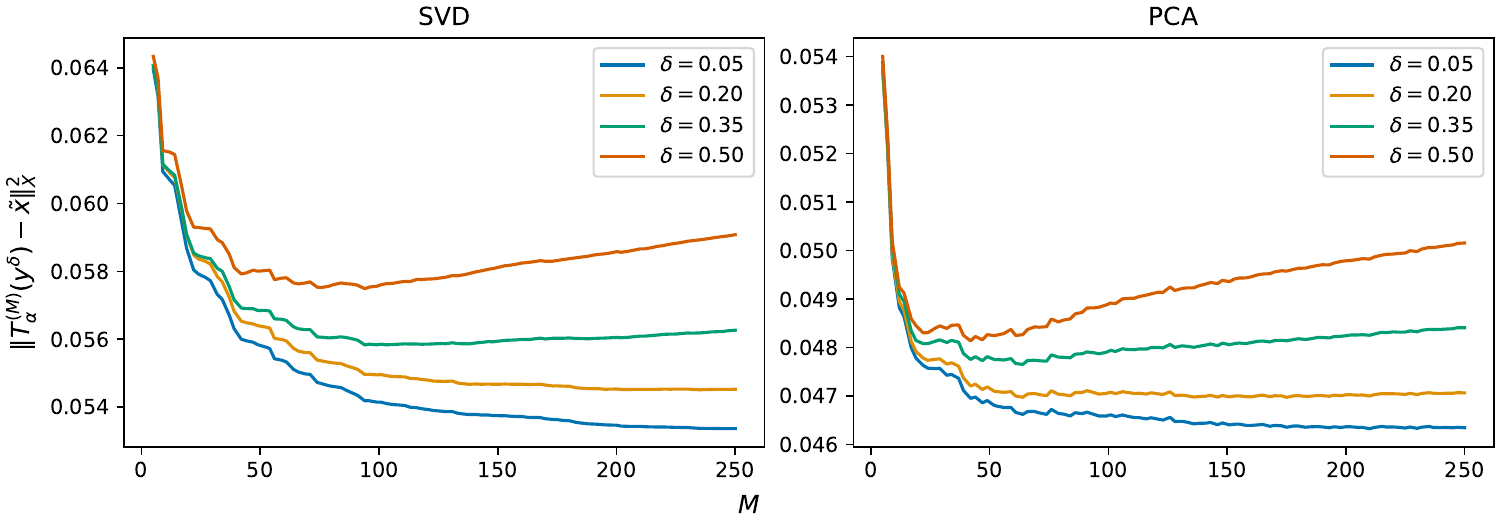}
    \caption{MNIST test sample.}
    \label{fig:lemma_MNIST}
\end{subfigure}
\caption{Reconstruction errors of one sample measured against a reference reconstruction $\tilde x = T_{\alpha_\mathrm{ref}} y^{\delta_\mathrm{ref}}$ with $\alpha_\mathrm{ref} = 0.03$, $\delta_\mathrm{ref} = 0.01$ and regularization parameter $\alpha = 0.5$ for all experiments.
We show the reconstruction in~\eqref{eq:ndim_rec} from a measurement $y^\delta$ against $M$, applied to selected noise levels $\delta$.
Although only satisfied for the SVD basis, by abuse of notation, we denote the reconstruction in~\eqref{eq:ndim_rec} by \smash{$T_\alpha^{(M)} (y^\delta)$} with respect to all basis choices.} 
\label{fig:lemma}
\end{figure}

As an extension, we now aim to estimate the intrinsic data dimension $N$ in the spirit of Theorem~\ref{lemma:expectation} as discussed above.
We emphasize again that these experiments are primarily intended as a proof of concept, illustrating how the theoretical findings manifest in practice.
In particular, if a data-adapted basis (such as the one obtained via PCA) is already available, dimension estimation should be performed directly in that basis.
On the other hand, if no ground truth reconstruction is known, our approach offers a way of estimating the intrinsic dimension directly from the data.
For our experiment, we draw one sample (i.e., $L=1$) and average reconstruction errors over $100$ independent noise realizations with a potentially non-optimal reconstruction parameter $\alpha$.
Instead of comparing the reconstructions from~\eqref{eq:ndim_rec} directly to $x^\dagger$, we use as reference an approximate solution
\begin{equation}
    \tilde x \coloneqq T_{\alpha_\mathrm{ref}} y^{\delta_{\mathrm{ref}}}
\end{equation}
computed at a small reference noise level $\delta_{\mathrm{ref}}$.
In contrast to Theorem~\ref{lemma:expectation}, this mimics the practical situation where the true solution is unknown and must be approximated by a high-quality reference reconstruction.

Figure~\ref{fig:lemma} shows the resulting errors for a fixed, sufficiently large regularization parameter $\alpha$.
For the simulated data, we observe a clear minimum at $M = N$ across all tested noise levels.
This confirms the prediction of Theorem~\ref{lemma:expectation} that the expected reconstruction error becomes minimal when the reconstruction dimension matches the intrinsic dimension of the data.
Note that Theorem~\ref{lemma:expectation} guarantees the existence of a suitable $\alpha$ only for basis~(a).
Nevertheless, the same behavior is observed empirically for basis~(b).
For MNIST, a comparable kink becomes apparent only at sufficiently large noise levels, again reflecting the approximation-data-error trade-off and indicating that MNIST does not lie exactly in a low-dimensional subspace. 
Consequently, the transition around an effective dimension is less pronounced and depends on the noise regime.

\section{Learned regularization terms}
As mentioned in the introduction,
 our work is motivated by a paper on learned regularization schemes~\cite{Goujon2022} and its extension \cite{goujon2024learning}.
Below, we perform the associated worst-case error analysis in a slightly simplified setting.
More precisely, we consider the Tikhonov functional (also known as the generalized LASSO \cite{AliTib2019})
\begin{equation} \label{learnedTik}
J_\alpha (x; y^\delta) \coloneqq \| Ax - y^\delta \|^2 + \alpha(\delta)  \Vert Wx \Vert_1,
\end{equation}
where the learnable potentials proposed in  \cite{Goujon2022} are replaced by the absolute value.
Interestingly, their differentiable learned potentials consistently resemble smoothed versions of the absolute value, commonly referred to as the Huber potential.
This retrospectively justifies the simplification.
However, this makes the functional \eqref{learnedTik} non smooth.
Still, it can be efficiently minimized using primal-dual methods \cite{Condat2013}.
Now, we detail the learning and reconstruction pipeline.

First, the sparsifying transform $W$ is learned for a denoising task with fixed noise level $\delta$, i.e., no knowledge of $A$ is used in the training (so-called \emph{universal training}).
Due to the imaging context, the authors of \cite{Goujon2022} restrict $W$ to be in the subspace of convolution operators (with multiple channels).
The training is performed with pairs \smash{$(x_i^{\bar \delta}, x_i)$}, $i=1,\ldots,N$, of noisy and corresponding ground truth images, which are generated synthetically.
The noise level $\bar \delta$ is the same for all tuples (in principle, training on multiple noise levels is also possible).
As training problem, we consider 
\begin{equation}
    \hat W  = \argmin_W  \sum_{i=1}^N \|\hat x_i - x_i\|^2  \qquad \text{s.t.}\qquad\hat x_i = \argmin_x \| Ax - x_i^{\bar \delta} \|^2 +  \Vert Wx \Vert_1.
\end{equation}
Solving this bilevel problem requires implicit differentiation techniques or unrolling, see for example \cite{LearnedRegularizers} for details.

To solve an inverse problem, a grid search is used to determine suitable values  for $\alpha(\delta)$ based on the knowledge of $A$ and $\delta$ (\emph{fine-tuning}).
This requires only a few samples $(x,y^\delta)$ with $y^\delta=Ax + \eta$ per noise level.
This phase is similar to other concepts for choosing optimal regularization parameters.
If we even construct a function $\alpha \colon \R \to \R$ (for example via linear spline interpolation), this leads to an a-priori parameter choice rule.
The model is then deployed to new data $y^\delta$ as follows.
First, the knowledge of the noise level $\delta$ is used to determine $\alpha(\delta)$.
Then, a reconstruction $x_\alpha^\delta$ is computed as a minimizer of the functional $J_\alpha$.

Now, we analyze the resulting regularization scheme 
\begin{equation}\label{eq:RecProblem}
    x_\alpha^\delta = \argmin_x J_\alpha (x; y^\delta) =  \argmin_x \| Ax - y^\delta \|^2 + \alpha(\delta)  \Vert Wx \Vert_1.
\end{equation}
Note that any component of $x$ in the orthogonal complement of the span $X_W= \mathrm{span} \{ w_j\}$ of the rows $w_j$ of $W$ does not influence the trained regularizer.
Below, we consider the cases $X_W = X$ and $ X_W \neq X$ separately.
If $X_W=X$, then $\Vert Wx\Vert_1 $ is indeed a norm.
A proper choice of $\alpha$ then yields an optimal regularization scheme under general conditions, see e.g.~\cite{Scherzer2009}.
This explains that learning  $W$ on a denoising problem and then applying it -- with an adapted parameter choice $\alpha$ -- to general inverse problems is a feasible approach.
If $X_W \neq X$, however, we expect an unbounded worst-case error as exemplified in the following theorem. 

\begin{theorem}\label{thm:UnboundedErrorCRR}
    Assume that $\mathrm{range}(AX_W^\perp)$ is non-closed in $Y$ and let $y^\delta \in \overline{A X_W^\perp}\setminus AX_W^\perp$. Then, there exists a sequence $(x_n)_{n\in\N} \subset X_W^\perp$ such that $\|Ax_n - y^\delta \| \xrightarrow{n\to\infty} 0$ and $J_\alpha(x_n; y^\delta) \xrightarrow{n\to\infty} 0$, but also $\|x_n\| \xrightarrow{n\to \infty}\infty$ and $\|x_n - x^\dagger \| \xrightarrow{n\to \infty}\infty$.
\end{theorem}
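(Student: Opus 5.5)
Since $y^\delta$ lies in the closure of $AX_W^\perp$, I will take any sequence $(x_n)_{n\in\N}\subset X_W^\perp$ with $Ax_n\to y^\delta$ in $Y$, which exists by the definition of the closure. This already gives $\|Ax_n-y^\delta\|\to 0$. Next, every $x\in X_W^\perp$ is orthogonal to all rows $w_j$ of $W$, so $Wx=0$ and $\|Wx_n\|_1=0$. Hence
\begin{equation*}
J_\alpha(x_n;y^\delta)=\|Ax_n-y^\delta\|^2+\alpha(\delta)\,\|Wx_n\|_1=\|Ax_n-y^\delta\|^2\to 0 .
\end{equation*}
So the first two claims are immediate, and the regularizer gives no control at all on $X_W^\perp$.

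The main step is $\|x_n\|\to\infty$, which I will prove by contradiction. Suppose $\|x_n\|\not\to\infty$. Then some subsequence is bounded. Since $X$ is a Hilbert space, this subsequence has a weakly convergent subsequence $x_{n_k}\rightharpoonup \bar x$. The set $X_W^\perp$ is a closed subspace, hence weakly closed, so $\bar x\in X_W^\perp$. Because $A$ is bounded and linear, it is weak-to-weak continuous, so $Ax_{n_k}\rightharpoonup A\bar x$. We also have $Ax_{n_k}\to y^\delta$ strongly, and weak limits are unique, so $A\bar x=y^\delta$. This gives $y^\delta\in AX_W^\perp$, contradicting the assumption $y^\delta\notin AX_W^\perp$. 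Therefore every subsequence is unbounded along some further subsequence, and in fact $\|x_n\|\to\infty$. To be careful here: if $\liminf\|x_n\|<\infty$, I extract a bounded subsequence and reach the same contradiction, so $\liminf\|x_n\|=\infty$.

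Finally, the reverse triangle inequality gives $\|x_n-x^\dagger\|\ge \|x_n\|-\|x^\dagger\|\to\infty$, since $x^\dagger$ is fixed. The only real obstacle is the weak compactness step. There I must make sure that the limit stays in $X_W^\perp$, which holds because the subspace is closed and hence weakly closed, and that the strong and weak limits of $Ax_{n_k}$ agree. The hypothesis that $\mathrm{range}(AX_W^\perp)$ is non-closed only guarantees that such a $y^\delta$ exists. The argument itself uses only $y^\delta\in\overline{AX_W^\perp}\setminus AX_W^\perp$.
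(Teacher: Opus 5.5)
Your proof is correct and follows the paper's argument. You take any approximating sequence in $X_W^\perp$ given by the closure, rule out boundedness via weak compactness, weak-to-weak continuity of $A$ and uniqueness of limits, and finish with the reverse triangle inequality. You are also slightly more careful than the paper in three places: you note explicitly that $Wx_n=0$, so that $J_\alpha(x_n;y^\delta)=\|Ax_n-y^\delta\|^2\to 0$; you note that $X_W^\perp$ is weakly closed; and you exclude any bounded subsequence, not just boundedness of the whole sequence, which is what actually gives $\|x_n\|\to\infty$.
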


\begin{proof}
    The proof follows from standard arguments, see e.g.~\cite{Louis1989} or~\cite{EnglHankeNeubauer1996}.
    We include it for convenience.
    Since \smash{$y^\delta \in \overline{A X_W^\perp}$}, there exists a sequence $(x_n)_{n\in\N} \subset X_W^\perp$ with $\lim_{n \to \infty}\|Ax_n - y^\delta\| = 0 $.
    We show that for any such sequence it holds $\lim_{n \to \infty}\|x_n\| = \infty$. 
    Suppose for contradiction that $(\|x_n\|)_{n\in\N}$ is bounded.
    Then there exists a subsequence $(x_{n_k})_{k\in\N}$ and $x \in X_W^\perp$ with $x_{n_k} \rightharpoonup x$.
    Since $A$ is linear and bounded, it is weak-to-weak continuous and thus $Ax_n \rightharpoonup Ax$.
    By uniqueness of the strong limit, we must have $Ax = y^\delta$. Since $x \in X^\perp_W$, we have $y^\delta \in A X_W^\perp$.
    This contradicts $y^\delta \notin A X_W^\perp$.
    Therefore, $\lim_{n \to \infty}\|x_n\| = \infty$ and also ${\|x_n - x^\dagger\| \geq |\|x_n \| - \|x^\dagger \|| \rightarrow \infty}$. 
\end{proof}
Theorem \ref{thm:UnboundedErrorCRR} refers to the idealized infinite dimensional case.
It implies that we should choose a learnable transform $W$ that adapts to the input size.
Indeed, the convolution has this feature.
To the best of our knowledge, there do not exist satisfying stability results for the infinite dimensional case if the sparsifying transform $W$ is not injective.

For the finite dimensional case, the literature is much richer.
It holds that given data $y^\delta$, all solutions $\hat x$ lead to the same value of $A \hat x$ and $\Vert W \hat x \Vert_1$, see \cite[Lem.\ 1]{AliTib2019}.
Since for any $\hat x$ the $A \hat x$ are equal, the optimality relation \begin{equation}\label{eq:Optimality condition}
    0 \in 2A^T(A x - y^\delta) + \alpha(\delta) W^T \partial \Vert \cdot \Vert_1(W \hat x)
\end{equation}
for \eqref{learnedTik} implies that any optimal subgradient $\hat \gamma \in \partial \Vert \cdot \Vert_1(W \hat x)$ leads to the same value of $W^T \hat \gamma$.
A discussion on sufficient conditions for uniqueness of the solution $\hat x$ can be also found in \cite{AliTib2019}.
Below, we do not enforce such conditions.
Following \cite{NeuAlt2024, HuYaoZha2024}, we instead perform a set-valued stability analysis of the reconstruction map $y \mapsto \hat x(y)$.
Our simple and direct proof of Lipschitz continuity relies on novel arguments rooted in convex analysis \cite{BauschkeCombettes2017}.
\begin{proposition}
    The set-valued solution map $y \mapsto \hat x(y)$ is Hausdorff Lipschitz continuous, namely there exists $\kappa > 0$ such that $\hat x(\tilde y) \subset \hat x(y) + \kappa \Vert y - \tilde y \Vert_2 B_1(0)$ for all $y, \tilde y \in \R^m$.
\end{proposition}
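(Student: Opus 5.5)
We plan to prove Lipschitz continuity by combining the uniqueness facts quoted above with a Hoffman-type error bound for polyhedral systems. Fix $y$ and write $\hat x(y)$ for the (nonempty, closed, convex) solution set of \eqref{eq:RecProblem}. Existence of minimizers follows since $J_\alpha(\cdot;y)$ is a convex, piecewise quadratic function bounded below, so its minimum is attained (Frank--Wolfe type argument for convex quadratic-plus-polyhedral objectives).

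By \cite[Lem.~1]{AliTib2019}, the quantities $\hat u(y)\coloneqq A\hat x$ and $W^T\hat\gamma$ are independent of the choice of $\hat x\in\hat x(y)$. The first step is to show that $y\mapsto \hat u(y)$ is single-valued and $1$-Lipschitz. Write the problem as $\min_u \|u-y\|^2 + \alpha h(u)$ over $u\in \mathrm{range}(A)$, with $h(u)\coloneqq\min\{\|Wx\|_1: Ax=u\}$. This $h$ is a convex, polyhedral value function, and infimal projections of polyhedral functions are polyhedral. Consequently $\hat u(y)=\mathrm{prox}_{\alpha h/2}(y)$, restricted to the subspace $\mathrm{range}(A)$, which is firmly nonexpansive by \cite{BauschkeCombettes2017}. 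This yields $\|\hat u(y)-\hat u(\tilde y)\|\le\|y-\tilde y\|$. Similarly, the optimal value $t(y)\coloneqq\|W\hat x\|_1=h(\hat u(y))$ is Lipschitz in $y$, because $h$ is polyhedral, hence Lipschitz on its (polyhedral) domain.

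Next, we describe the solution set as a polyhedron with right-hand side depending Lipschitz-continuously on $y$:
\[
\hat x(y)=\{x : Ax=\hat u(y),\ \|Wx\|_1\le t(y)\}.
\]
Introducing the slack variable $s\ge |Wx|$ componentwise, this set is the projection onto $x$ of
\[
\{(x,s): Ax=\hat u(y),\ -s\le Wx\le s,\ \mathbf 1^Ts\le t(y)\}.
\]
The constraint matrix here is fixed and only the right-hand side $b(y)=(\hat u(y),0,0,t(y))$ varies. Hoffman's lemma (equivalently, Lipschitz continuity of polyhedral multifunctions in the right-hand side, as used in \cite{NeuAlt2024,HuYaoZha2024}) gives a constant $\kappa_0$ depending only on $A$ and $W$ with
\[
P(\tilde b)\subset P(b)+\kappa_0\|b-\tilde b\|B_1(0)
\]
whenever both sets are nonempty. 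Projecting onto the $x$-component does not increase distances. Combining this with the previous step gives $\kappa=\kappa_0(1+L_t)$, where $L_t$ denotes the Lipschitz constant of $t$.

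The main obstacle is the Lipschitz continuity of $t(y)$ with a constant that is uniform in $y$. We would first try to deduce it from the optimality condition \eqref{eq:Optimality condition}. There, $\alpha W^T\hat\gamma = 2A^T(y-\hat u(y))$ is Lipschitz, and $t(y)=\langle\hat\gamma,W\hat x\rangle$. We would then use the fact that $\hat\gamma$ can be chosen from finitely many faces of $[-1,1]^k$, so that $t$ is piecewise given by $\tfrac{2}{\alpha}\langle y-\hat u(y),\hat u(y)\rangle$. This expression is locally Lipschitz, but only with a bound growing in $\|y\|$. If a global constant is needed, we would instead rely on the polyhedrality of $h$: $h$ is globally Lipschitz on its domain, and its domain is a subspace containing $\mathrm{range}(A)$ whenever $\ker W$ permits. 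Failing that, we would settle for local Lipschitz continuity on bounded sets.
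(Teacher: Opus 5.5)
Your proposal is correct. Its first half is the paper's argument: you write $\hat u(y)=A\hat x(y)$ as the proximal point of the polyhedral value function $h(u)=\min\{\|Wx\|_1 : Ax=u\}$ and get $\|\hat u(y)-\hat u(\tilde y)\|_2\le\|y-\tilde y\|_2$. Your scaling $\mathrm{prox}_{\alpha h/2}$ is the accurate one for the objective $\|u-y\|_2^2+\alpha h(u)$.

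The second half takes a different route. The paper treats $\hat x(y)$ as the solution set of the linear program $\min\{\|Wx\|_1 : Ax=\hat u(y)\}$. It then cites Lipschitz stability of LP solution sets under right-hand-side perturbations of the equality constraints \cite[Thm.~2.4]{ManShi1987}. You instead encode optimality through the value constraint $\|Wx\|_1\le t(y)$. This makes $\hat x(y)$ the projection of a polyhedron with fixed constraint matrix and right-hand side $(\hat u(y),0,0,t(y))$, and you apply Hoffman's bound directly, in effect proving the needed special case of the cited LP result by hand. Your version is self-contained and gives the explicit constant $\kappa=\kappa_0(1+L_h)$, with $\kappa_0$ the Hoffman constant of the lifted system and $L_h$ the Lipschitz constant of $h$ on $\mathrm{range}(A)$. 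The price is the extra step that $t(y)=h(\hat u(y))$ is uniformly Lipschitz.

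On that step your closing paragraph is needlessly hedged.
\begin{itemize}
\item $\mathrm{dom}\,h=\mathrm{range}(A)$ always, whatever $\ker W$ is. For $u\in\mathrm{range}(A)$ the program is feasible and bounded below by $0$, so it attains its minimum.
\item A proper polyhedral convex function agrees on its domain with a maximum of finitely many affine functions, so it is globally Lipschitz there.
\item Since $\hat u(y)\in\mathrm{range}(A)$ for every $y$, you get $|t(y)-t(\tilde y)|\le L_h\|y-\tilde y\|_2$ uniformly in $y$.
\end{itemize}
You already state this in your main argument, and it suffices. You therefore need neither the route through the optimality condition nor the fallback to local Lipschitz continuity. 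Incidentally, the identity $t(y)=\frac{2}{\alpha}\langle y-\hat u(y),\hat u(y)\rangle$ from that route holds globally, not just piecewise.
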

\begin{proof}
For any optimal solution $\hat x(y)$ to \eqref{eq:RecProblem} with fixed $y$, we have that
\begin{equation}
    \hat u (y) = A \hat x(y) = \argmin_u \Vert u - y\Vert_2^2 + \alpha \min_x \{\Vert W x \Vert_1 : Ax = u\},
\end{equation}
where as usual the minimum is infinity if no such $x$ exists.
Now, we need to show that the infimal projection $v(u) = \min_x \{\Vert W x \Vert_1 : Ax = u\}$ is proper, convex and lower semi-continuous.
Clearly, we have $v(u)\geq 0$ and $v(0) = 0$, thus $v$ is proper.
The convexity follows from standard arguments \cite[Prop.\ 12.36]{BauschkeCombettes2017}.
Regarding lower semi-continuity, we can equivalently show that the epigraph $\mathrm{epi}(v) = \{(u,t): v(u) \leq t\}$ is closed.
For this, we note that
\begin{equation}
     C = \Bigl\{(u,t,x,z): \sum_i z_i \leq t, -z_i \leq W_{i,:} x \leq z_i, z_i \geq 0, Ax = u\Bigr\}
\end{equation}
is a closed polyhedron.
Due to this structure, the same holds $\mathrm{epi}(v) = P_{1,2} C$, where $P_{1,2}$ denotes the projection onto the first two coordinates of $C$.
Therefore, $y \mapsto \hat u (y) = \mathrm{prox}_{\alpha v}(y)$ is a proximal operator and thus 1-Lipschitz continuous.
To conclude, we note that the solution set $\hat x (y)$ is given by
\begin{equation}
   \hat x (y) = \argmin_x \{\Vert Wx \Vert_1 : Ax = \hat u (y)\},
\end{equation}
which can be rewritten as a linear program (which is by construction feasible).
Now, the claim follows by the Lipschitz stability of the solution set of linear programs with respect to perturbations of the right hand side in the linear equality constraints \cite[Thm.\ 2.4]{ManShi1987}.
\end{proof}
Now, we look into estimates for $\kappa$.
First, we note that $\partial \Vert \cdot \Vert_1$ is either a singleton or a convex polytope.
By performing a decomposition of the domain according to the sign pattern of $Wx$, we obtain an affine condition in $x$ and $y$ from \eqref{eq:Optimality condition}.
At first glance, it appears that the Lipschitz constant on each affine region is bounded independently by $1/\sigma_{\min}(A)$.
However, given a region with associated index set $I$ such that $W_{I,:} x = 0$ for all x in this region, we can obtain the (potentially) improved bound $1/\sigma_{\min}(A P_{\ker(W_{I,:})})$.
This means that fewer non zero entries potentially lead to better stability.
Thus, we need to analyze the size of $I$ depending on $\alpha$ and $y$.
In view of \eqref{eq:Optimality condition}, we get that 
\begin{equation}
    \bigl\Vert W^T \partial \Vert \cdot \Vert_1(W \hat x) \bigr\Vert_2 \leq \frac{2}{\alpha} \Vert A \Vert_2 \Vert y\Vert_2.
\end{equation}
If $W=I$ (namely for LASSO reconstruction) or if $W$ is invertible, this directly leads to an estimate on $\vert I^c \vert$ that scales as $1 / \alpha$.
However, the situation is more challenging in our general setting, and we are not aware of a generic bound.
Even if we would have such a bound, the next step would require stability properties for the restricted operators $\sigma_{\min}(A P_{\ker(W_{I,:})})$ depeding on $\vert I \vert$ (such as the restricted isometry property and robust nullspace property from compressed sensing), which are often unavailable for typical inverse problems.
Thus, we are in principle stuck with the naive upper bound $1/\sigma_{\min}(A)$, namely that we do not have a structural stabilizing effect.
Instead, the variational regularization \eqref{eq:RecProblem} favors certain solutions with desirable properties which empirically greatly enhances reconstruction quality and robustness.
\begin{remark}
    The decomposition techniques proposed in \cite{HuYaoZha2024} enables us to also establish Lipschitz dependence on $\alpha$.
    This only requires a slight modification of the arguments presented above.
\end{remark}

\section{Conclusion}
This paper was motivated by a discussion at the ICSI Conference 2024 regarding the similarities and differences between classical and learned regularization schemes for inverse problems.
This led us to analyze the reconstruction error of Tikhonov regularization with a fixed regularization parameter when applied to data with varying noise levels.
Our proofs closely followed techniques introduced and widely used by Alfred K.\ Louis.
Interestingly, the ill-posedness of the underlying problem has only a minor impact on the reconstruction error.
In particular, for a fixed regularization parameter, the error increases only mildly as the noise level grows.
While this may initially seem surprising, it is in fact consistent with classical theoretical expectations.
We confirmed these results numerically for both Tikhonov regularization and two learned reconstruction methods.

We then turned to the case where the true solutions belong to an unknown but finite-dimensional subspace.
The subsequent error analysis reveals a distinct behavior that depends analytically on the interplay between the noise level, the regularization parameter, and the subspace dimension $N$. In this setting, learned methods do have advantage due to the additional information used in training. This is confirmed by our numerical experiment.
As a side result, this analysis allows the precise determination of $N$ through numerical experiments.
We do not claim that this is the method of choice for determining $N$ -- in fact, we would propose other and more direct techniques for this task.

Finally, we return to the paper that sparked the mentioned discussion at ICSI and that was the starting point for our investigation.
This method starts by learning a regularization term with a sparsifying transform for denoising.
The latter is then fixed and a simple a-priori choice of regularization parameter allows to apply the resulting scheme to general inverse problems. 
Analyzing this approach in the classical infinite-dimensional functional analytic setting shows that no general error bound can be guaranteed.
However, a refined analysis in the discretized setting demonstrates stability, highlighting the practical applicability and potential of this intriguing method.

\section{Acknowledgements}

P.\ Maass acknowledges support from the PIONEER project, funded by the BMFTR (Federal Ministry of Research, Technology and Space) within the VIP+ program (Project No. 03VP13241). S.\ Neumayer acknowledges support from the DFG (grant SPP2298 - 543939932). Finally, the support of the Mobility Programme (M-0187) of the Sino-German Center for Research Promotion is acknowledged.

\printbibliography
\end{document}